\newcounter{cprop}[section]
\newtheorem{definition}[cprop]{Definition}
\newtheorem{remark}[cprop]{Remark}
\newtheorem{lemma}[cprop]{Lemma}
\newtheorem{proposition}[cprop]{Proposition}
\newtheorem{theorem}[cprop]{Theorem}
\title[Finite speed of propagation for SPME]{Finite speed of propagation for stochastic porous media equations.}
\author[B. Gess]{Benjamin Gess}
\date{\today}
\email{gess@math.tu-berlin.de}
\address{Institut f\"ur Mathematik, Technische Universit\"at Berlin (MA 7-5)\\
Stra\ss{}e des 17. Juni 136, 10623 Berlin, Germany}
\thanks{{\bf Acknowledgements:} The author would like to thank Michael R\"ockner for valuable discussions and comments.}
\keywords{stochastic partial differential equations, stochastic porous medium equation, finite speed of propagation, hole-filling, free boundary.}
\subjclass[2010]{37L55, 60H15; 76S05, 37L30}
\begin{document}

\begin{abstract}
  We prove finite speed of propagation for stochastic porous media equations perturbed by linear multiplicative space-time rough signals. Explicit and optimal estimates for the speed of propagation are given. The result applies to any continuous driving signal, thus including fractional Brownian motion for all Hurst parameters. The explicit estimates are then used to prove that the corresponding random attractor has infinite fractal dimension.
\end{abstract}

\maketitle


\section{Introduction}

In this paper we prove finite speed of propagation for solutions to stochastic porous media equations (SPME) driven by linear multiplicative space-time rough signals, i.e.\ to equations of the form
\begin{equation}\label{eqn:SPME}\begin{split}
    d X_t         &= \D \left(|X_t|^m \sgn(X_t) \right)dt + \sum_{k=1}^N f_k X_t \circ dz^{(k)}_t, \text{ on } \mcO_T, \\
    X(0)          &= X_0, \text{ on } \mcO,
\end{split}\end{equation}
with homogeneous Dirichlet boundary conditions on a bounded, smooth domain $\mcO \subseteq \R^d$, $m \in (1,\infty)$, rough driving signals $z^{(k)} \in C([0,T];\R)$
and diffusion coefficients $f_k \in C^\infty(\bar\mcO)$. 
We assume the number of signals $N$ to be finite and high regularity for $f_k$ for simplicity only. In fact, the proofs only require $\sum_{k=1}^\infty f_k(\xi)z_t^{(k)} \in C([0,T];C^2(\bar\mcO))$. The stochastic Stratonovich integral $\circ$ occurring in \eqref{eqn:SPME} is informal but justified by a transformation technique and stability results analyzed in detail in \cite{G11c,G11c-2}.

Recently, a hole-filling property for SPME driven by multiplicative space-time Brownian noise has been shown in \cite{BR12}, which may be seen as an important step towards proving finite speed of propagation. However, no explicit control on the rate of growth of the support of the solution could be established, which made it impossible to deduce finite speed of propagation. In the present paper, we prove explicit (and locally optimal) estimates on the speed of hole-filling and thus deduce finite speed of propagation for SPME. Moreover, we will completely remove the non-degeneracy assumption on the noise as it was conjectured to be possible in \cite{BR12}, which allows to analyze the dependence of the speed of propagation on the strength of the noise. In particular, we prove convergence to the deterministic, optimal estimates when the noise-intensity converges to zero (cf.\ Remark \ref{rmk:noise_intens} below). In \cite{BR12} restrictions on the dimension $d$ and on the order of the nonlinearity $m$ had to be supposed for technical reasons and it was conjectured that these could be completely removed. In the present paper we prove that this indeed is the case.

Our methods are purely local and thus apply without change to the homogeneous Cauchy-Dirichlet problem to \eqref{eqn:SPME} on not necessarily bounded domains $\mcO \subseteq \R^d$, as soon as the problem of unique existence of corresponding solutions is solved. Since up to now this problem remains open, we restrict to bounded domains for simplicity (cf.,\ however, Remark \ref{rmk:unbdd} below).

The stochastic case is contained in our setup by choosing $z^{(k)}$ to be given as paths of some continuous stochastic process. Therefore, our results yield purely pathwise results for the stochastic case. However, due to the explicit form of our estimates, moment estimates also immediately follow.

In the deterministic case it is well-known that the attractor corresponding to 
\begin{equation}\label{eqn:det_pert}
  d X_t = \D \left(|X_t|^m \sgn(X_t) \right)dt + \l X_t dt,
\end{equation}
with Dirichlet boundary conditions has infinite fractal dimension iff $\l > 0$ (cf.\ \cite{EZ08}). Generally speaking, it highly depends on the drift of an SPDE as well as on the type of random perturbation, whether the noise has a regularizing effect on the long-time dynamics of the unperturbed system. 

In \cite{G11b} it has been shown that sufficiently non-degenerate additive Wiener noise stabilizes the dynamics of \eqref{eqn:det_pert} in the sense that the random attractor consists of a single random point and thus is zero dimensional. Moreover, it is well-known that multiplicative It\^o noise may stabilize the long-time dynamics due to the It\^o correction term. For example, this has been realized in \cite{CCLR07} in case of the Chafee-Infante equation perturbed by spatially homogeneous, linear multiplicative It\^o noise. The more intriguing case of space-time, linear multiplicative It\^o noise has been analyzed in \cite{BDPR12} for fast diffusion equations (cf.\ also the references therein), where a regularizing effect due to the It\^o correction term has been observed in \cite[Theorem 3.5]{BDPR12}. 

This correction term is absent in the case of linear multiplicative Stratonovich noise. In this spirit, it has been shown in \cite{CCLR07} that spatially homogeneous, linear multiplicative Stratonovich noise does not have any regularizing effect on the long-time behavior of the Chafee-Infante equation. On the other hand, each linear PDE with non-negative, self-adjoint drift having negative trace (possibly $-\infty$) may be stabilized by linear multiplicative space-time Stratonovich noise (cf.\ \cite{CR04}). For these reasons, it is an intriguing question, whether including linear multiplicative space-time Stratonovich noise in \eqref{eqn:det_pert} stabilizes the long-time behavior, or whether the random attractor associated to 
\begin{equation}\label{eqn:pert_SPME_intro}
   d X_t = \D \left(|X_t|^m \sgn(X_t) \right)dt + \l X_t dt + \sum_{k=1}^N f_k X_t \circ dz^{(k)}_t,
\end{equation}
remains infinite dimensional. Based on the explicit bounds on the rate of propagation obtained in this paper, we prove lower bounds for the Kolmogorov $\ve$-entropy of the random attractor corresponding to \eqref{eqn:pert_SPME_intro} and thus conclude that the random attractor remains infinite dimensional.

The SPME \eqref{eqn:SPME} with driving signals $z^{(k)}$ given as paths of independent Brownian motions $\b^{(k)}$ has been intensively studied in the recent history (cf.\ e.g.\ \cite{DPR04,K06,DPRRW06,RRW07,RW08,BDPR08,BDPR08-2,BDPR09,G11} and references therein). The construction of a random dynamical system (RDS) associated to \eqref{eqn:SPME} and the proof of existence of a corresponding random attractor has been given in \cite{G11c,G11c-2}. In case of porous media equations (PME) perturbed by additive noise, the existence of a random attractor has been shown in \cite{BGLR10} and has subsequently been generalized to more general additive perturbations \cite{GLR11} and spatially rougher noise \cite{G11b}.

The sublinear, fast diffusion case ($m \in [0,1)$) exhibits completely different propagation properties. In particular, finite speed of propagation does not hold for fast diffusion equations, but the positivity set of non-trivial solutions will cover the hole domain of definition after an arbitrarily small timespan (cf.\ \cite{V07} and references therein). On the other hand, solutions to the fast diffusion equation become extinct in finite time (cf.\ \cite{V07} for the deterministic case, \cite{BDPR09-2,RW11} for the stochastic case).

In the following let $\mcO \subseteq \R^d$ be a bounded domain with smooth boundary $\Sig := \partial\mcO$. For $T > 0$ we define the space-time domain $\mcO_T := [0,T] \times \mcO$, the lateral boundary $\Sig_T := [0,T] \times \partial\mcO$ and the parabolic boundary $\mcP_T := \Sig_T \cup (\{T\} \times \mcO)$. Let $\vartheta$ be the surface measure on $\Sig$ and $\nu$ be the outward pointing normal vector to $\Sig$. By $C^0(\mcO)$ we denote the space of continuous functions on $\mcO$ and by $C^{m,n}(\mcO_T)$ the space of continuous functions on $\mcO_T$ with $m$ continuous derivatives in time and $n$ continuous derivatives in space. $C_c^{m,n}(\mcO_T)$ is the subspace of all compactly supported functions in $C^{m,n}(\mcO_T)$. We define $C^n(\bar\mcO)$, $C^{m,n}(\bar\mcO_T)$ to be the spaces obtained by restricting the functions in $C^n(\R^d)$, $C^{m,n}([0,T] \times \R^d)$ onto $\bar\mcO$. Moreover, we define $H$ to be the dual of the first order Sobolev space with zero boundary $H_0^1(\mcO)$. For two non-empty subsets $A,B$ of a metric space $(E,d)$ we define $\dist(A,B) := \inf\{d(a,b)|\ a\in A,\ b\in B\}$. If $X$ is a Banach space, then $L^p_{loc}((0,T];X)$ denotes the space of all $X$-valued functions $f$ such that $f \in L^p([\tau,T];X)$ for all $\tau \in (0,T]$. As usual in probability theory we often denote the time-dependency of functions by a subscript $X_t$ rather than by $X(t)$ in order to keep the equations at a bearable length. 

Let us start by recalling the finite speed of propagation properties for deterministic PME
\begin{equation}\label{eqn:PME}
  \partial_t u = \D \Phi(u),
\end{equation}
where for simplicity of notation we have set $\Phi(u) := |u|^{m}\sgn(u)$. Finite speed of propagation for deterministic PME has been known for a long time and was first proved in \cite{OKY58}. For a more detailed study on interfaces for the one dimensional case we refer to \cite{V84}. Our main reference for the deterministic PME and main source of inspiration for the stochastic case will be \cite{V07} where a beautiful account on the propagation and expansion properties for deterministic PME is given.

\begin{definition}[Notions of solutions for \eqref{eqn:PME}]
  \begin{enumerate}
   \item A function $u \in L^1_{loc}(\mcO_T)$ with $\Phi(u) \in L^1_{loc}(\mcO_T)$ is said to be a local, very weak subsolution to \eqref{eqn:PME} if 
     \begin{equation*}
       \int_{\mcO_T} u \partial_r \eta\ d\xi dr \ge - \int_{\mcO_T} \Phi(u) \D \eta\ d\xi dr, 
    \end{equation*}
    for all non-negative $\eta \in C^{1,2}_c(\mcO_T)$. 
   \item If, in addition, $u \in L^1(\mcO_T)$ with $\Phi(u) \in L^1(\mcO_T)$ and there are functions $u_0 \in L^1(\mcO)$ and $\Phi(g) \in L^1(\Sig_T)$ such that
    \begin{equation*}
      \int_{\mcO_T} u \partial_r \eta\ d\xi dr + \int_\mcO u_0\eta_0\ d\xi  \ge - \int_{\mcO_T} \Phi(u) \D \eta\ d\xi dr + \int_{\Sig_T} \Phi(g) \partial_\nu \eta\ d\vartheta dr, 
    \end{equation*}
  for all non-negative $\eta \in C^{1,2}(\bar\mcO_T)$ with $\eta_{| \mcP_T} = 0$, then $u$ is said to be a very weak subsolution to the (inhomogeneous) Dirichlet problem to \eqref{eqn:PME} with initial condition $u_0$ and boundary value $g$.
  
    \item If $\Phi(u) \in L^2([0,T];H^1_0(\mcO))$ then $u$ is said to be a (local) weak subsolution to the homogeneous Dirichlet problem to \eqref{eqn:PME}. 
    \item If $\Phi(u) \in L^2_{loc}((0,T];H^1_{0}(\mcO))$ then $u$ is said to be a generalized (local) weak subsolution to the homogeneous Dirichlet problem to \eqref{eqn:PME}. 
  \end{enumerate} 

  Analogous definitions are used for (local) very weak supersolutions. (Local) very weak solutions to \eqref{eqn:PME} are functions that are supersolutions and subsolutions simultaneously.
\end{definition}

We note that each essentially bounded, generalized weak solution $u$ is a generalized weak solution to \eqref{eqn:PME} on each smooth subdomain $K \subseteq \mcO$ with initial data ${u_0}_{|K}$ and boundary data $\Phi(g) = \Phi(u)$ in the sense of traces.

The proof of finite speed of propagation is a direct consequence of the so-called hole-filling problem

\begin{lemma}[Deterministic hole-filling, \cite{V07}, Lemma 14.5]\label{lemma:det_hole}
    Let $\xi_0 \in \R^d$, $T,R > 0$ and $u \in C((0,T)\times B_R(\xi_0))$ be an essentially bounded, non-negative, very weak subsolution to \eqref{eqn:PME} with vanishing initial value $u_0$ on $B_R(\xi_0)$ and boundary value $g$ satisfying $H:= \|g\|_{L^\infty([0,T]\times \partial B_R(\xi_0))} < \infty$. Define $C_{det} = \frac{m-1}{2dm(m-1)+4m}$ and
     $$T_{det} :=  R^2 \frac{C_{det}}{H^{m-1}}.$$
   Then $u(t)$ vanishes in $B_{R_{det}(t)}(\xi_0)$ for all $t \in [0,T_{det} \wedge T]$, where
     $$R_{det}(t) = R - \sqrt{t} \left(\frac{H^{m-1}}{C_{det}}\right)^\frac{1}{2}.$$
\end{lemma}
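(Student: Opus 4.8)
The plan is to reduce the statement to a comparison with an explicit supersolution of \eqref{eqn:PME}. Since $u$ is a non-negative very weak subsolution of the Dirichlet problem on $B_R(\xi_0)$ with vanishing initial datum and boundary datum $g$ with $g\le H$, it is enough to produce a non-negative ``barrier'' $U$ on $[0,T_{det}\wedge T]\times B_R(\xi_0)$ such that (i) $U$ is a very weak supersolution of \eqref{eqn:PME} there, (ii) $U\ge H$ on $[0,T_{det}\wedge T]\times\partial B_R(\xi_0)$, (iii) $U(0,\cdot)\ge 0$ on $B_R(\xi_0)$, and (iv) $U\equiv 0$ on $\{|\xi-\xi_0|\le R_{det}(t)\}$. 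Indeed (ii) and (iii) give $u\le U$ on the parabolic boundary, hence $u\le U$ on the whole cylinder by the comparison principle for \eqref{eqn:PME}; since $0\le u$ and, by (iv), $U(t)\equiv 0$ on $B_{R_{det}(t)}(\xi_0)$, we get $u(t)\equiv 0$ there. So the whole content of the lemma is the construction of such a $U$ carrying the \emph{explicit} constants in $R_{det}$ and $T_{det}$.

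For $U$ I would look for a radial function adapted to the $\sqrt t$--scaling imposed by $R_{det}$. Write $r=|\xi-\xi_0|$ and seek $U=U(r,t)$ supported in $\{r\ge R_{det}(t)\}$ which near the free boundary $r=R_{det}(t)$ has $\Phi(U)$ vanishing like $(r-R_{det}(t))^{m/(m-1)}$ — the natural porous-medium behaviour, under which $\Phi(U)$ is $C^1$ up to the front, so that the gluing across $\{r=R_{det}(t)\}$ is automatic — and which increases in $r$ up to the value $H$ as $r\uparrow R$ (possibly reaching $H$ on a sphere $r=R_1<R$ and being capped by the constant $H$ on $\{R_1\le r\le R\}$, which is a supersolution and produces only a favourable corner). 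Then (ii)--(iv) are built in (note $R_{det}(T_{det})=0$, so the hole closes up exactly at $T_{det}$), and the real work is the interior inequality $\partial_t U\ge\D\Phi(U)$ on the annulus $\{R_{det}(t)<r<R_1\}$. Using $\D\Phi(U)=\partial_{rr}\Phi(U)+\frac{d-1}{r}\,\partial_r\Phi(U)$ and $\partial_r\Phi(U)\ge 0$, the curvature term is non-negative and works against us, so a flat (planar, one-dimensional) profile is in $\R^d$ only a subsolution; the fix is to let the front $R_{det}(t)$ advance inward \emph{faster} than a genuine solution would, by a $d$-dependent amount, and tracking precisely how much extra front-speed is needed to absorb the curvature and lower-order terms uniformly over the annulus is exactly what fixes $C_{det}=\frac{m-1}{2dm(m-1)+4m}$.

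The step I expect to be the main obstacle is this uniform verification of $\partial_t U\ge\D\Phi(U)$, where two regimes compete: near $r=R$ the curvature term is fully active while the barrier must already have risen to $H$, which constrains the profile there and is presumably what forces its non-obvious shape (and the cap by $H$); near $t=0$ the annulus has width $O(\sqrt t)$, so $U$ is steep and $\D\Phi(U)$ is of order $t^{-1}$, and one must check that the fast-moving front makes $\partial_t U$ (also of order $t^{-1}$) large enough to dominate it, the curvature term being only $O(t^{-1/2})$ in this limit. Closing both estimates simultaneously \emph{with the sharp closed-form constant}, rather than with an unspecified dimensional constant, is the delicate point; tracking the error terms is otherwise routine, as are the free-boundary gluing and the final invocation of the comparison principle. (A more robust but less explicit alternative would be a localised energy argument on the balls $B_\rho(\xi_0)$, $\rho\le R$: multiply \eqref{eqn:PME} by $\Phi(u)$, localise, and derive an ordinary differential inequality in $\rho$ forcing the local energy to vanish for $\rho<R_{det}(t)$ — but this would not give the clean form of $C_{det}$.)
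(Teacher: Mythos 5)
You have the right high--level strategy (comparison with an explicit supersolution), but your proposal has a genuine gap precisely where the lemma has its whole content: the barrier carrying the closed--form constant $C_{det}$ is never constructed, and the shape you propose is not the one that produces it. The standard proof (this is \cite[Lemma 14.5]{V07}, and it is exactly the computation the paper redoes in the stochastic setting in the proof of Theorem \ref{thm:hole_filling_local_space}, with $\mu\equiv 0$) does not use a single annular front--type profile around $\xi_0$ at all. Instead, for each interior point $\xi_1\in B_{R_1}(\xi_0)$ one takes the separable barrier
\begin{equation*}
W(t,\xi):=\tilde C\,|\xi-\xi_1|^{\frac{2}{m-1}}\bigl(\tilde T-t\bigr)^{-\frac{1}{m-1}},\qquad (t,\xi)\in[0,\tilde T)\times B_r(\xi_1),\quad r=\mathrm{dist}\bigl(\xi_1,\partial B_R(\xi_0)\bigr).
\end{equation*}
A two--line computation gives $\partial_t W=\frac{\tilde C}{m-1}|\xi-\xi_1|^{\frac{2}{m-1}}(\tilde T-t)^{-\frac{m}{m-1}}$ and, since $\Delta|\xi-\xi_1|^{\frac{2m}{m-1}}=\frac{2m(d(m-1)+2)}{(m-1)^2}|\xi-\xi_1|^{\frac{2}{m-1}}$, also $\Delta W^m=\tilde C^{m}\frac{2m(d(m-1)+2)}{(m-1)^2}|\xi-\xi_1|^{\frac{2}{m-1}}(\tilde T-t)^{-\frac{m}{m-1}}$; hence $\partial_t W\ge\Delta W^m$ iff $\tilde C^{m-1}\le\frac{m-1}{2m(d(m-1)+2)}=C_{det}$, which is where the constant comes from. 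The boundary requirement $W\ge H$ on $[0,\tilde T)\times\partial B_r(\xi_1)$ is worst at $t=0$ and holds iff $\tilde T\le r^2 C_{det}/H^{m-1}$. Comparison on the small cylinder (initial datum $0\le W(0,\cdot)$, boundary datum $g\le H\le W$) then gives $0\le u(t,\xi_1)\le W(t,\xi_1)=0$ for $t\le r^2C_{det}/H^{m-1}$, and sweeping $\xi_1$ over $B_{R_1}(\xi_0)$ with $r\ge R-R_1$ yields exactly $R_{det}(t)$ and $T_{det}$. Note the hole--filling conclusion is only needed at the single center point $\xi_1$, so there is no free boundary to glue, no cap at $H$, and no competition of regimes to balance.

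By contrast, your barrier $U$, supported in $\{|\xi-\xi_0|\ge R_{det}(t)\}$ with a moving front and a cap at $H$, is left entirely unspecified, and you yourself identify the verification of $\partial_t U\ge\Delta\Phi(U)$ with the sharp constant as the ``main obstacle'': that step is the lemma, so as it stands the argument is not a proof. There are also concrete reasons to doubt that this route delivers $C_{det}$ cleanly: at $t=0$ the front sits at $r=R$ while the boundary datum already demands $U\ge H$ on $\partial B_R(\xi_0)$, so the profile must jump/steepen at rate $t^{-1}$ in a shrinking annulus, and near $r=R$ the curvature term must be absorbed --- exactly the two regimes you flag without resolving. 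The point--centered separable supersolution above sidesteps both issues (the factor $(\tilde T-t)^{-1/(m-1)}$ handles the boundary requirement, worst at $t=0$, by the choice of $\tilde T$), and it is the construction that gives the optimal rate cited in the paper. Your fallback energy argument would, as you note, not produce the explicit constant either, so it cannot replace the missing step.
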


For boundary value $g$ given as $g \equiv H$ for some $H >0$, the bound on the rate of hole-filling from Lemma \ref{lemma:det_hole} is optimal (cf.\ \cite[p.\ 339]{V07}). 

From the hole-filling Lemma one may deduce

\begin{theorem}[Deterministic finite speed of propagation, \cite{V07}, Theorem 14.6]\label{thm:det_prop}
  Let $u \in C((0,T)\times\mcO)$ be an essentially bounded, non-negative, very weak subsolution to the homogeneous Dirichlet problem to \eqref{eqn:PME} and set $H = \|u\|_{L^\infty(\mcO_T)}$. Then 
  \begin{enumerate}
   \item For every $s \in [0,T]$ and every $h > 0$ there is a time-span $T_h > 0$ such that
      $$ \supp(u_{s+t}) \subseteq B_h(\supp(u_s)), \quad \forall t \in [0,T_h \wedge (T-s)]. $$
    More precisely, $T_h$ is given by 
      $$T_h :=  h^2 \frac{C_{det}}{H^{m-1}}.$$
   \item For every $s \in [0,T]$
      $$ \supp(u_{s+t}) \subseteq B_{\sqrt{t}\left(\frac{H^{m-1}}{C_{det}}\right)^\frac{1}{2}}(\supp(u_s)), \quad \forall t \in [0,T-s].$$
  \end{enumerate}
\end{theorem}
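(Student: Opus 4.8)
The plan is to deduce both parts from the deterministic hole-filling Lemma~\ref{lemma:det_hole}, applied at points $\xi_0$ lying far from $\supp(u_s)$, once $u$ has been extended by zero past the Dirichlet boundary. The first step is thus to let $\bar u$ denote the extension of $u$ by $0$ to $(0,T)\times\R^d$ and to verify that $\bar u$ is again a non-negative, essentially bounded, continuous (using continuity of $u$ up to $\Sig$, where it vanishes) very weak subsolution to \eqref{eqn:PME} on all of $(0,T)\times\R^d$, with $\|\bar u\|_{L^\infty}=H$. Here the \emph{homogeneous} Dirichlet condition is used decisively: the boundary data satisfy $\Phi(g)=0$, so the surface integral in the very weak formulation drops out; and the only distributional mass produced on $\Sig_T$ by the kink of $\Phi(\bar u)$ across $\Sig$ is a non-negative surface measure, because the non-negative function $\Phi(u)$ attains its minimum value $0$ along $\Sig$ and hence has inward normal derivative $\le 0$ there, which is precisely the sign a subsolution is allowed to have. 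I expect this gluing fact, that extension by zero across a homogeneous Dirichlet boundary preserves the subsolution property, to be the only substantive point in the whole argument; it is standard (cf.\ \cite{V07}), but it is where $u\ge 0$ and the homogeneity of the boundary values enter.

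For part (i), fix $s\in[0,T]$ and $h>0$, and let $\xi_0\in\R^d$ be an arbitrary point with $d_0:=\dist(\xi_0,\supp(u_s))>h$. Choosing any $R\in(h,d_0]$, the initial datum $\bar u_s$ vanishes on $B_R(\xi_0)$ (since $R\le d_0$) and $\bar u$ is bounded by $H$ on $\partial B_R(\xi_0)$, so Lemma~\ref{lemma:det_hole} applies to $\bar u(s+\cdot)$ on $(0,T-s)\times B_R(\xi_0)$ and shows that $\bar u_{s+t}$ vanishes on $B_{R_{det}(t)}(\xi_0)$ for all $t\in[0,\,R^2C_{det}H^{1-m}\wedge(T-s)]$, with $R_{det}(t)=R-\sqrt{t}\,(H^{m-1}/C_{det})^{1/2}$. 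For $t\in[0,T_h\wedge(T-s)]$ one has $t\le T_h=h^2C_{det}H^{1-m}<R^2C_{det}H^{1-m}$ and $\sqrt{T_h}\,(H^{m-1}/C_{det})^{1/2}=h$, hence $R_{det}(t)\ge R-h>0$; thus $\bar u_{s+t}$ vanishes on a non-empty open ball centered at $\xi_0$, i.e.\ $\xi_0\notin\supp(\bar u_{s+t})=\supp(u_{s+t})$. As $\xi_0$ was an arbitrary point at distance $>h$ from $\supp(u_s)$, this yields $\supp(u_{s+t})\subseteq B_h(\supp(u_s))$ for all such $t$ (the closed $h$-neighborhood being the only thing obtained, if anything, exactly at $t=T_h$; for $t<T_h$ the open-neighborhood statement follows by repeating the argument with $h'<h$, $T_{h'}<T_h$, and letting $h'\uparrow h$).

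For part (ii), given $s\in[0,T]$ and $t\in[0,T-s]$, set $h:=\sqrt{t}\,(H^{m-1}/C_{det})^{1/2}$, so that $T_h=h^2C_{det}H^{1-m}=t$. Part (i) with this value of $h$ then gives $\supp(u_{s+t'})\subseteq B_h(\supp(u_s))$ for every $t'\in[0,T_h\wedge(T-s)]=[0,t]$, and in particular for $t'=t$ --- which is exactly the claimed inclusion (the case $H=0$, i.e.\ $u\equiv 0$, being trivial). So, apart from the boundary gluing in the first step, the proof is merely bookkeeping around Lemma~\ref{lemma:det_hole}; that gluing step is the place I would expect to have to be careful.
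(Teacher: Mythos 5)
Your route is genuinely different from the paper's. The paper never touches the lateral boundary of $\mcO$ with the subsolution $u$ itself: it takes the solution $u_C$ of the Cauchy problem on $\R^d$ started from the (zero-extended) data at time $s$ (existence by \cite[Theorem 9.3]{V07}), observes that $u_C$ is a supersolution of the homogeneous Dirichlet problem on $\mcO$ so that $u\le u_C$ by comparison, and then applies Lemma \ref{lemma:det_hole} to $u_C$, which is continuous on all of $\R^d$, so no gluing or trace question arises. Your bookkeeping in parts (i) and (ii) (choice of $R\in(h,d_0]$, $R_{det}(t)\ge R-h>0$, and the optimization $h=\sqrt t\,(H^{m-1}/C_{det})^{1/2}$ for (ii)) is correct and is the same as what the paper's reduction yields.

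The genuine gap is in your pivotal first step. You justify that the zero extension $\bar u$ is a very weak subsolution on $(0,T)\times\R^d$ ``using continuity of $u$ up to $\Sig$, where it vanishes'' --- but the theorem only assumes $u\in C((0,T)\times\mcO)$ (interior continuity) together with the very weak inequality for test functions vanishing on $\mcP_T$; no pointwise boundary values of $u$, no trace of $\Phi(u)$ on $\Sig_T$, and certainly no normal derivative of $\Phi(u)$ there are part of the hypotheses. To prove the pasting for this class one must enlarge the test-function class to $\eta$ not vanishing on $\Sig_T$, and the cutoff argument produces commutator terms of order $\delta^{-1}\int_{\{\dist(\cdot,\Sig)<\delta\}}\Phi(u)\,d\xi\,dr$, whose vanishing is exactly the averaged zero-trace property of $\Phi(u)$ that you implicitly assumed; it does not follow from the very weak inequality plus interior continuity without additional argument (it does hold, with some work, for weak subsolutions with $\Phi(u)\in L^2([0,T];H^1_0(\mcO))$, but the theorem is stated for very weak ones). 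There is also a sign slip in the heuristic: since $\Phi(u)\ge 0$ attains its minimum $0$ on $\Sig$, it is the \emph{outward} normal derivative that is $\le 0$ (the inward one is $\ge 0$); that is indeed what makes the interfacial surface measure non-negative, so your conclusion is right but the stated sign is not. To close the gap you would either have to prove this pasting lemma, or do what the paper does: dominate $u$ by a globally defined solution via the comparison principle (the deterministic case of Theorem \ref{thm:comp}) and run the hole-filling Lemma \ref{lemma:det_hole} on that majorant instead of on an extension of $u$.
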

\begin{proof}
  For each non-negative $u_0 \in L^\infty(\R^d) \cap L^1(\R^d)$ there is a unique non-negative, essentially bounded, weak solution $u_C \in C([0,T];L^1(\R^d))$ to the Cauchy problem for \eqref{eqn:PME} (cf.\ \cite[Theorem 9.3]{V07}). This in turn is a weak supersolution to the Cauchy-Dirichlet problem on $\mcO$. Since $t\mapsto \|u_C(t)\|_\infty$ is non-increasing, we have $\|u_C\|_{L^\infty(\mcO_T)} = \|u\|_{L^\infty(\mcO_T)}$. Without loss of generality one may thus assume that $u$ is a solution to the Cauchy problem, which simplifies the argument since no difficulties at the boundary appear. Noticing that $u$ in particular is an essentially bounded, very weak solution on each $B_R(\xi_0)$, the claim becomes a direct consequence of Lemma \ref{lemma:det_hole}.
\end{proof}

\section{Real-valued linear multiplicative noise}\label{sec:real}

We start the analysis of the stochastically perturbed case by the much simpler situation of spatially homogeneous noise, i.e.\ we consider the homogeneous Dirichlet problem to
\begin{equation}\label{eqn:real_valued}
  dX_t = \D \Phi(X_t)dt + \sum_{k=1}^N f_k X_t \circ dz_t^{(k)},\quad \text{on } \mcO_T,
\end{equation}
where $f_k \in \R$ are $\R$-valued constants and $\mcO \subseteq \R^d$ is as before. We will prove below that \eqref{eqn:real_valued} reduces to the deterministic PME \eqref{eqn:PME} by rescaling and a random transformation in time. Since the bounds on the rate of propagation are known to be optimal in the deterministic case, we deduce optimal bounds for the case of spatially homogeneous perturbations. 

Let $\mu_t = -\sum_{k=1}^N f_k z_t^{(k)} $ and $Y_t := e^{\mu_t}X_t$. Then (informally)
\begin{equation}\label{eqn:homog_transf}
  \partial_t Y_t = e^{-(m-1)\mu_t} \D\Phi(Y_t), \quad \text{on } \mcO_T.
\end{equation} 
Solutions to \eqref{eqn:real_valued} are then defined by the reverse transformation, i.e.\ a function $X$ is a solution to \eqref{eqn:real_valued} with initial value $X_0 \in L^1(\mcO)$ and boundary value $g$ iff $Y_t := e^{\mu_t}X_t$ is a solution to \eqref{eqn:homog_transf} with initial value $Y_0 := e^{\mu_0}X_0$ and boundary value $e^{\mu}g$. 

In \cite{G11c} it has been shown that this transformation can be made rigorous if the signals $z^{(k)}$ are given as paths of continuous semimartingales or are of bounded variation. In addition, in case of continuous driving signals, solutions to \eqref{eqn:real_valued} were obtained in \cite{G11c} as limits of approximating solutions driven by smoothed signals $z^{(\d)} \in C^\infty([0,T];\R^N)$ with $z^{(\d)} \to z$ in $C([0,T];\R^N)$.

We set $F(t) := \int_0^t e^{-(m-1)\mu_r} dr \in C^1(\R_+;\R_+)$. Since $F$ is strictly increasing we may define $G(t) := F^{-1}(t)$ to be the inverse of $F$ and $u_t := Y_{G(t)}$. An informal computation suggests
\begin{equation}\label{eqn:det_PME_2}
   \partial_t u_t = \D\Phi(u_t), \quad \text{on } \mcO_T.
\end{equation}
A rigorous justification of this temporal transformation can easily be given by considering an artificial viscosity approximation, i.e. $\partial_t u^{(\ve)} = \D\Phi(u_t^{(\ve)}) + \ve \D u_t^{(\ve)}$. Local uniform continuity of $u^{(\ve)}$ (cf.\ \cite{DB83,V07}) allows to pass to the limit pointwisely and thus implies the claim. 

Vice versa, solutions $X$ to \eqref{eqn:real_valued} can be expressed by solutions to \eqref{eqn:det_PME_2} via:
\begin{equation}\label{eqn:homo_trans}
   X_t := e^{-\mu_t}u_{F(t)}.
\end{equation}
Lemma \ref{lemma:det_hole} implies
\begin{proposition}[Hole-filling for spatially homogeneous noise]\label{prop:real_valued}
    Let $\xi_0 \in \R^d$, $T,R > 0$ and $X \in C((0,T)\times B_R(\xi_0))$ be an essentially bounded, non-negative, very weak subsolution to \eqref{eqn:real_valued} with vanishing initial value $X_0$ on $B_R(\xi_0)$ and boundary value $g$ satisfying $H:= \|e^\mu g\|_{L^\infty([0,T]\times \partial B_R(\xi_0))} < \infty$. Define 
       $$T_{stoch} :=  F^{-1}\left(R^2\frac{C_{det}}{H^{m-1}}\right),$$
    where $F(t) := \int_0^t e^{-(m-1)\mu_r} dr$. 

    Then $X_t$ vanishes in $B_{R_{stoch}(t)}(\xi_0)$ for all $t \in [0,T_{stoch} \wedge T]$, where
       $$R_{stoch}(t) = R - \sqrt{F(t)} \left(\frac{H^{m-1}}{C_{det}}\right)^\frac{1}{2}.$$
\end{proposition}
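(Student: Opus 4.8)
The plan is to reduce the assertion to the deterministic hole-filling Lemma~\ref{lemma:det_hole} by the random space-scaling and time change introduced above. By definition of solutions to \eqref{eqn:real_valued}, the function $X$ is a very weak subsolution to \eqref{eqn:real_valued} on $(0,T)\times B_R(\xi_0)$ precisely when $u_t := Y_{G(t)} = e^{\mu_{G(t)}}X_{G(t)}$ is a very weak subsolution to the deterministic equation \eqref{eqn:PME}, where $F(t)=\int_0^t e^{-(m-1)\mu_r}\,dr$ and $G = F^{-1}$. Since $X\in C((0,T)\times B_R(\xi_0))$ is non-negative and essentially bounded, $\xi\mapsto e^{\mu}\xi$ is positive, and $t\mapsto F(t)$ is continuous, strictly increasing and $C^1$, the transformed function $u$ is again a non-negative, essentially bounded very weak subsolution to \eqref{eqn:PME}, now on $(0,F(T))\times B_R(\xi_0)$.

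Next I would read off the data of $u$. Because $F(0)=0$ we have $G(0)=0$, so the initial value of $u$ on $B_R(\xi_0)$ is $u_0 = e^{\mu_0}X_0 = 0$. The boundary value of $u$ on $[0,F(T)]\times\partial B_R(\xi_0)$ is $\tilde g_t := e^{\mu_{G(t)}}g_{G(t)}$; substituting $t = F(s)$ (so that $s=G(t)$ ranges over $[0,T]$ as $t$ ranges over $[0,F(T)]$) gives $\|\tilde g\|_{L^\infty([0,F(T)]\times\partial B_R(\xi_0))} = \|e^{\mu}g\|_{L^\infty([0,T]\times\partial B_R(\xi_0))} = H$.

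Applying Lemma~\ref{lemma:det_hole} to $u$ with centre $\xi_0$, radius $R$, time horizon $F(T)$ and boundary bound $H$ then shows that $u(s)$ vanishes in $B_{R_{det}(s)}(\xi_0)$ for all $s\in[0,T_{det}\wedge F(T)]$, with $T_{det}=R^2C_{det}/H^{m-1}$ and $R_{det}(s)=R-\sqrt{s}\,(H^{m-1}/C_{det})^{1/2}$. Transforming back through $X_t = e^{-\mu_t}u_{F(t)}$ and using $e^{-\mu_t}>0$, the set on which $X_t$ vanishes coincides with the set on which $u_{F(t)}$ vanishes; hence $X_t$ vanishes in $B_{R_{det}(F(t))}(\xi_0)$ whenever $F(t)\in[0,T_{det}\wedge F(T)]$. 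Since $F$ is strictly increasing with $F(0)=0$, this is the interval $t\in[0,F^{-1}(T_{det})\wedge T]=[0,T_{stoch}\wedge T]$, and $R_{det}(F(t))=R-\sqrt{F(t)}\,(H^{m-1}/C_{det})^{1/2}=R_{stoch}(t)$, which is exactly the claim.

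The one point requiring care is the very first step: one must ensure that the combined random scaling and path-dependent $C^1$ time change transports very weak \emph{sub}solutions of \eqref{eqn:real_valued} into very weak subsolutions of \eqref{eqn:PME}, and not merely classical or smooth solutions. This is precisely what the artificial viscosity approximation $\partial_t u^{(\ve)}=\D\Phi(u^{(\ve)})+\ve\D u^{(\ve)}$ recalled above (together with the local uniform continuity estimates of \cite{DB83,V07}) is designed to achieve: a non-negative test function $\eta$ for \eqref{eqn:PME} pulls back, after composition with $F$ and absorption of the Jacobian $F'(r)=e^{-(m-1)\mu_r}$ into the spatial term, to an admissible non-negative test function for the transformed equation, and the defining inequality of a subsolution is preserved in the limit. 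Everything else in the argument is bookkeeping with the monotone changes of variables.
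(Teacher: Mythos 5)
Your proposal is correct and is precisely the argument the paper intends: Proposition \ref{prop:real_valued} is stated as an immediate consequence of Lemma \ref{lemma:det_hole} via the transformation $X_t = e^{-\mu_t}u_{F(t)}$ and the time change $F$, exactly as you carry out, including the bookkeeping of initial and boundary data and the inversion $F^{-1}(T_{det}) = T_{stoch}$. Your closing remark is also the right point of care (transporting the very weak subsolution property through the spatially constant scaling and the $C^1$ time change), which the paper handles with the same change-of-variables/viscosity-approximation reasoning you cite.
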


As pointed out above, the rates and constants given in Proposition \ref{prop:real_valued} are optimal. Analogously, bounds on the rate of expansion of the support of solutions to \eqref{eqn:real_valued} may be derived from \eqref{eqn:homo_trans} and Theorem \ref{thm:det_prop}.

\begin{remark}\label{rmk:homog_local_time}
  For $T \approx 0$ we have $\mu_t \approx \mu_0$ on $[0,T]$ and thus 
  $$ \sqrt{F(t)} = \left(\int_0^t e^{-(m-1)\mu_r}dr\right)^\frac{1}{2} \approx e^{-\frac{(m-1)}{2}\mu_0} \sqrt{t}, \quad \text{on } [0,T]$$ 
and $H = \|e^{\mu}g\|_{L^\infty([0,T] \times \partial B_R(\xi_0))} \approx e^{\mu_0}\|g\|_{L^\infty([0,T] \times \partial B_R(\xi_0))}$. Thus
     $$R_{stoch}(t) \approx R -  \sqrt{t} \left(\frac{\|g\|_{L^\infty([0,T] \times \partial B_R(\xi_0))}^{m-1}}{C_{det}}\right)^\frac{1}{2}, \quad \forall t \in [0,T_{stoch} \wedge T].$$
 Consequently, we recover the deterministic rate of expansion for small times $T \approx 0$.
\end{remark}

\section{Linear multiplicative space-time noise}

We now turn to the case of SPME perturbed by spatially inhomogeneous noise \eqref{eqn:SPME}. Since spatially homogeneous noise is contained as a special case, the precise bounds derived in the last section will serve as optimal bounds for the inhomogeneous case. Let
\begin{equation*}
  \mu_t(\xi) := -\sum_{k=1}^N f_k(\xi) z^{(k)}_t.
\end{equation*}
As in the case of spatially homogeneous noise, solutions to \eqref{eqn:SPME} are defined via the transformation $Y_t := e^{\mu_t}X_t$ which (informally) leads to the transformed equation (first studied in \cite{BDPR09-2,BR11b})
\begin{equation}\label{eqn:trans_inhomog}\begin{split}
  \partial_t Y_t         &= e^{\mu_t} \D \Phi(e^{-\mu_t} Y_t), \text{ on } \mcO_T \\
   Y(0)                   &= Y_0 = e^{\mu_0}X_0, \text{ on } \mcO,
\end{split}\end{equation}
with homogeneous Dirichlet boundary conditions. Note that $\mu_t$ now depends on the spatial variable $\xi \in \mcO$. As before, this transformation can be made rigorous if the driving signals $z^{(k)}$ are given as paths of continuous semimartingales or are of bounded variation. In case of continuous driving signals this notion of solution is justified in a limiting sense via approximation of the driving signal (cf.\ \cite{G11c}). 

Similar results and methods as presented in this section may be applied to the more general equation
\begin{equation*}\begin{split}
  \partial_t Y_t         &= \rho_1 \D \Phi(\rho_2 Y_t), \text{ on } \mcO_T \\
   Y(0)                   &= Y_0, \text{ on } \mcO,
\end{split}\end{equation*}
with $\rho_1, \rho_2 \in C^{0,2}(\mcO_T)$ and zero Dirichlet boundary conditions. For simplicity and in order to derive locally optimal estimates we restrict to equations of the form \eqref{eqn:trans_inhomog} and postpone the treatment of the more general case to the Appendix \ref{sec:gen_expansion}.

The unique existence of weak solutions to \eqref{eqn:trans_inhomog} for bounded initial data has been given in \cite{G11c} in the following sense

\begin{definition}[weak \& very weak solutions for \eqref{eqn:trans_inhomog}]\label{def:weak_soln}\ 
 \begin{enumerate}    
  \item A function $Y \in L^1_{loc}(\mcO_T)$ with $\Phi(e^{-\mu}Y) \in L^1_{loc}(\mcO_T)$ is called a local, very weak subsolution to \eqref{eqn:trans_inhomog} if
      \begin{equation*}
        \int_{\mcO_T} Y \partial_r \eta\ d\xi dr \ge -\int_{\mcO_T} \Phi(e^{-\mu} Y) \D (e^{\mu} \eta )\ d\xi dr, 
      \end{equation*}      
  for all non-negative $\eta \in C^{1,2}_c(\mcO_T)$. 
  \item If, in addition, $Y \in L^1(\mcO_T)$ with $\Phi(e^{-\mu}Y) \in L^1(\mcO_T)$ and there are functions $Y_0 \in L^1(\mcO)$ and $\Phi(g) \in L^1(\Sig_T)$ such that
      \begin{align*}\label{ra_m:eqn:very_weak_rough_transformed}
        \int_{\mcO_T} Y \partial_r \eta\ d\xi dr + \int_\mcO Y_0 \eta_0\ d\xi 
        \ge &-\int_{\mcO_T} \Phi(e^{-\mu} Y) \D (e^{\mu} \eta )\ d\xi dr \\
        &+ \int_{\Sigma_T} \Phi(e^{-\mu} g) \partial_\nu (e^{\mu} \eta) d\vartheta dr, 
      \end{align*}      
  for all non-negative $\eta \in C^{1,2}(\bar\mcO_T)$ with $\eta_{|\mcP_T} = 0$ then $Y$ is said to be a very weak subsolution to the (inhomogeneous) Dirichlet problem to \eqref{eqn:trans_inhomog} with initial condition $Y_0$ and boundary value $g$. 

  \item If $\Phi(e^{-\mu}Y) \in L^2([0,T];H^{1}_0(\mcO))$ then $Y$ is said to be a (local) weak subsolution to the homogeneous Dirichlet problem to \eqref{eqn:trans_inhomog}.
  \item If $\Phi(e^{-\mu}Y) \in L^2_{loc}((0,T];H^{1}_{0}(\mcO))$ then $Y$ is said to be a generalized (local) weak subsolution to the homogeneous Dirichlet problem to \eqref{eqn:trans_inhomog}.
  \end{enumerate}
  Analogous definitions are used for (local) very weak supersolutions. (Local) very weak solutions to \eqref{eqn:trans_inhomog} are functions that are supersolutions and subsolutions simultaneously.
\end{definition}

It is easy to see that every essentially bounded, generalized weak solution $Y$ is a generalized weak solution on each smooth subdomain $K \subseteq \mcO$ with initial condition ${Y_0}_{|K}$ and boundary data $\Phi(g) = \Phi(Y)$ in the sense of traces.

As outlined in the beginning of this section, solutions to \eqref{eqn:SPME} are now defined via the transformation $Y_t = e^{\mu_t}X_t$, i.e.
\begin{definition}[weak \& very weak solutions for \eqref{eqn:SPME}] 
   A function $X \in L^1_{loc}(\mcO_T)$ is said to be a (local, generalized, very) weak sub/supersolution to \eqref{eqn:SPME} with initial condition $X_0$ and boundary value $g$, if $Y_t = e^{\mu_t}X_t$ is a (local, generalized, very) weak sub/supersolution to \eqref{eqn:trans_inhomog} with initial condition $Y_0 = X_0 e^{\mu_0}$ and boundary value $g e^{\mu}$.
\end{definition}

In order to prove finite speed of propagation it will turn out to be sufficient to suppose $X$ to be an essentially bounded, very weak solution to the Dirichlet problem corresponding to \eqref{eqn:SPME}. In fact, more is known:
\begin{proposition}[\hskip-1pt\cite{G11c}, Theorem 1.4, Theorem 1.12, Theorem 1.17]\label{prop:u_ex}
  Let $X_0 \in L^1(\mcO)$. Then 
  \begin{enumerate}
   \item There is a generalized weak solution $X \in C([0,T];L^1(\mcO))$ to the homogeneous Dirichlet problem for \eqref{eqn:SPME} satisfying $X \in C((0,T]\times \mcO)$ and $X \in L^\infty_{loc}((0,T];L^\infty(\mcO))$. 
   \item If $X_0 \ge 0$ a.e.\ on $\mcO$, then $X \ge 0$ a.e.\ on $\mcO$.
   \item If $X_0 \in L^\infty(\mcO)$ then, in addition, $X \in C([0,T];H) \cap L^\infty(\mcO_T)$ and $\Phi(X) \in L^2([0,T];H_0^1(\mcO))$. In particular, $X$ is a weak solution to \eqref{eqn:SPME}.
  \end{enumerate}
  The generalized weak solution $X$ to \eqref{eqn:SPME} from (i) is unique.
\end{proposition}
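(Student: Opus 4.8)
The plan is to reduce everything to the known theory for the transformed equation \eqref{eqn:trans_inhomog} together with a stability argument in the driving signal, exactly along the lines indicated before Definition \ref{def:weak_soln}. First I would mollify the paths: choose $z^{(k,\d)} \in C^\infty([0,T];\R)$ with $z^{(k,\d)} \to z^{(k)}$ uniformly, put $\mu^{(\d)}_t(\xi) := -\sum_k f_k(\xi)z^{(k,\d)}_t$, and consider
\begin{equation*}
  \partial_t Y^{(\d)}_t = e^{\mu^{(\d)}_t}\,\D\Phi\bigl(e^{-\mu^{(\d)}_t}Y^{(\d)}_t\bigr), \qquad Y^{(\d)}(0)=e^{\mu^{(\d)}_0}X_0,
\end{equation*}
with homogeneous Dirichlet conditions. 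For fixed $\d$ this is a non-autonomous quasilinear equation of porous-medium type with smooth, strictly positive weights $e^{\pm\mu^{(\d)}}$; I would treat it by the classical variational method for evolution equations governed by maximal monotone operators on $H=(H^1_0(\mcO))^*$ (Brezis, Barbu): this yields a unique weak solution, lying in $C([0,T];H)$ with $\Phi(e^{-\mu^{(\d)}}Y^{(\d)})\in L^2([0,T];H^1_0(\mcO))$ when $X_0\in L^2(\mcO)$, and in $C([0,T];L^1(\mcO))$ for $X_0\in L^1(\mcO)$, in complete analogy with the deterministic PME.

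Next I would extract the structural properties at fixed $\d$. Since $Y\equiv 0$ solves the equation, the comparison/order-preserving property (a Kato-type inequality, which survives multiplication by the positive weight $e^{\mu^{(\d)}}$) gives propagation of positivity, $X_0\ge 0 \Rightarrow Y^{(\d)}\ge 0 \Rightarrow X^{(\d)}:=e^{-\mu^{(\d)}}Y^{(\d)}\ge 0$, and the associated $L^1$-contraction both yields uniqueness at fixed $\d$ and, compared with $0$, the bound $\|X^{(\d)}_t\|_{L^1}\le\|X_0\|_{L^1}$. For bounded data, testing the weak formulation against truncations (Stampacchia/Moser iteration) or the maximum principle gives $\|X^{(\d)}\|_{L^\infty(\mcO_T)}\le C(\|X_0\|_{L^\infty},\sup_{[0,T]\times\bar\mcO}|\mu^{(\d)}|)$, and the last quantity is bounded uniformly in $\d$. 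This already gives item (iii) for each $\d$.

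The passage $\d\to 0$ is where I expect the real work. From the uniform $L^\infty(\mcO_T)$-bound, the uniform $L^2([0,T];H^1_0)$-bound on $\Phi(e^{-\mu^{(\d)}}Y^{(\d)})$ and the energy estimate in $H$ one gets weak-$\ast$ compactness; an Aubin-Lions-Simon argument, using an equicontinuity-in-time estimate in $H$ read off from the equation, upgrades this to strong $L^2(\mcO_T)$-convergence along a subsequence, after which the nonlinear limit is identified by the Minty monotonicity trick, giving a generalized weak solution $X$ for the original continuous signal. The crux, and the source of the uniqueness statement, is to show that the limit does not depend on the mollification: running the argument for two sequences $z^{(\d)},z^{(\d')}\to z$ one must prove an $L^1$-type stability estimate controlling $X^{(\d)}-X^{(\d')}$ by $\|\mu^{(\d)}-\mu^{(\d')}\|_{C([0,T];C^2(\bar\mcO))}$. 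This is delicate because the weights $e^{\pm\mu}$ occur both inside and outside the degenerate Laplacian, so the naive Gronwall estimate for the difference does not close; one has to integrate the equation for $X^{(\d)}-X^{(\d')}$ against a suitable test function and absorb the resulting commutator terms using the $H^1_0$-regularity of $\Phi(e^{-\mu}Y)$ and the monotonicity of $\Phi$. The same estimate, together with the fact that a generalized weak solution is determined by its restriction to $[\tau,T]$ for $\tau>0$ and its $L^1$-continuity at $t=0$, gives uniqueness among generalized weak solutions, i.e. the last assertion.

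Finally, the two remaining regularity claims in (i) are local and only see the noise through the uniform bound on $\mu$. The $L^1\to L^\infty$ smoothing effect for the porous-medium operator, which is stable under multiplication by bounded, strictly positive weights, gives $X\in L^\infty_{loc}((0,T];L^\infty(\mcO))$ for merely integrable $X_0$; and once $X$ is locally bounded, $Y=e^{\mu}X$ is a locally bounded weak solution of a degenerate parabolic equation with coefficients that are bounded measurable in time and $C^2$ in space, so the interior Hölder continuity theory of DiBenedetto for such equations applies on compact subsets of $(0,T]\times\mcO$ and yields $X\in C((0,T]\times\mcO)$. Combining these with the existence, positivity, boundedness and uniqueness obtained above proves the Proposition.
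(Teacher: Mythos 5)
The paper does not prove this proposition at all: it is imported verbatim from \cite{G11c} (Theorems 1.4, 1.12, 1.17), and the only in-paper material you can compare against is the appendix, where the same circle of ideas is reproduced for the more general equation \eqref{eqn:general_trans} (Lemma \ref{lemma:gen_l_infty_bound}, Proposition \ref{prop:gen_ex}, Theorem \ref{thm:gen_comp}). Your outline is in the right spirit of that construction — transform to \eqref{eqn:trans_inhomog}, mollify the signal, get uniform $L^\infty$ bounds, use DiBenedetto interior continuity for equicontinuity, and pass to the limit — but it has a genuine gap exactly at the step you yourself flag as ``the real work'': the stability of solutions with respect to the driving signal, which is what makes the limit independent of the mollification and what delivers the uniqueness assertion. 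Saying that one must ``integrate the equation for $X^{(\d)}-X^{(\d')}$ against a suitable test function and absorb the commutator terms'' is naming the problem, not solving it. The reason the naive estimate fails is quantitative: any Gronwall-type bound obtained by differentiating a (weighted) $L^1$- or $H$-norm of the difference produces constants involving $\|\partial_t\mu^{(\d)}\|$, which is not controlled as $\d\to 0$ for merely continuous signals. The actual argument (in \cite{G11c}, and mirrored in Theorem \ref{thm:gen_comp} here) is a duality argument: one solves a backward, uniformly parabolic dual problem with a regularized coefficient $a_\ve\ge\ve$, splits $[0,T]$ into subintervals on which the variation of the coefficients is small (condition \eqref{eqn:choice_tau_2}), proves $H^2$-bounds on the dual solution with explicit control of the blow-up in $\ve$, and only then lets $\ve\to 0$. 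Without reproducing some version of this, neither the well-definedness of your limit solution nor the final uniqueness claim of the proposition is established.

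A secondary, smaller objection: the identification of the nonlinear limit ``by the Minty monotonicity trick'' in $H=(H^1_0(\mcO))^*$ is not available as stated, because the time- and space-dependent weights $e^{\pm\mu}$ sit both inside and outside the degenerate Laplacian and destroy the standard monotone-operator structure on $H$; this is precisely why the reference (and the appendix here, Proposition \ref{prop:gen_ex}) identifies the limit instead through the uniform $L^\infty$ bound plus local equicontinuity and pointwise (locally uniform) convergence, for which monotonicity is not needed. The remaining ingredients of your sketch (comparison with zero for positivity, $L^1$--$L^\infty$ smoothing, interior H\"older continuity) are consistent with the cited theory, but as a proof of the proposition the proposal is incomplete at its central stability/uniqueness step.
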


The proof of finite speed of propagation will rely on local comparison to supersolutions. We now present the required comparison result for essentially bounded, very weak solutions to the inhomogeneous Dirichlet problem.

\begin{theorem}[Comparison for very weak solutions]\label{thm:comp}
  Let $X^{(1)},X^{(2)}$ be essentially bounded sub/supersolutions to \eqref{eqn:SPME} with initial conditions $X^{(1)}_0 \le X^{(2)}_0$ and boundary data $g^{(1)} \le g^{(2)}$, a.e.\ in $\mcO$ respectively. Then,
    $$X^{(1)} \le X^{(2)},\quad\text{a.e.\ in } \mcO.$$
  In particular, essentially bounded, very weak solutions to \eqref{eqn:SPME} are unique.
\end{theorem}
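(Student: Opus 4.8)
The plan is to reduce the comparison statement for \eqref{eqn:SPME} to a comparison statement for the transformed equation \eqref{eqn:trans_inhomog}, and then to prove the latter by the classical Ole\u{\i}nik--type duality/test-function argument adapted to the spatially inhomogeneous operator $Y \mapsto e^{\mu}\D\Phi(e^{-\mu}Y)$. By definition, $X^{(i)}$ is a sub/supersolution to \eqref{eqn:SPME} if and only if $Y^{(i)}_t := e^{\mu_t}X^{(i)}_t$ is a sub/supersolution to \eqref{eqn:trans_inhomog}; since $e^{\mu}$ is strictly positive and bounded above and below on $\bar\mcO_T$, the orderings $X^{(1)}_0 \le X^{(2)}_0$, $g^{(1)} \le g^{(2)}$ are equivalent to $Y^{(1)}_0 \le Y^{(2)}_0$, $Y^{(1)}|_{\Sigma_T}e^{\mu}$-ordered analogously, and $X^{(1)}\le X^{(2)}$ is equivalent to $Y^{(1)}\le Y^{(2)}$. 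So it suffices to show $Y^{(1)}\le Y^{(2)}$ a.e.\ in $\mcO_T$.

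Set $w := Y^{(1)}-Y^{(2)}$ and $\psi := \Phi(e^{-\mu}Y^{(1)}) - \Phi(e^{-\mu}Y^{(2)})$; subtracting the two very-weak inequalities gives, for all non-negative $\eta \in C^{1,2}(\bar\mcO_T)$ with $\eta|_{\mcP_T}=0$,
\begin{equation*}
  \int_{\mcO_T} w\,\partial_r\eta\,d\xi dr + \int_\mcO (Y^{(1)}_0-Y^{(2)}_0)\eta_0\,d\xi \ge -\int_{\mcO_T} \psi\,\D(e^{\mu}\eta)\,d\xi dr + \int_{\Sigma_T}(\Phi(e^{-\mu}g^{(1)})-\Phi(e^{-\mu}g^{(2)}))\partial_\nu(e^{\mu}\eta)\,d\vartheta dr.
\end{equation*}
The second and fourth terms have favorable signs (the initial datum since $\eta_0\ge 0$; the boundary term because $g^{(1)}\le g^{(2)}$, $\Phi$ is monotone, and $\partial_\nu(e^\mu\eta)\le 0$ on $\Sigma_T$ as $\eta\ge 0$ vanishes on $\Sigma_T$ — here one should be slightly careful and justify $\partial_\nu\eta\le 0$ rigorously). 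Because $Y^{(i)}$ are essentially bounded and $\Phi$ is continuous and strictly increasing, on the set where $w>0$ we may write $\psi = a\, w$ with a bounded, non-negative measurable coefficient $a = a(\xi,r)$ obtained from the mean-value/monotonicity relation between $\Phi(e^{-\mu}Y^{(1)})$ and $\Phi(e^{-\mu}Y^{(2)})$ (extended by, say, $a\equiv 0$ where $w\le 0$). Hence, testing against $\eta$ and expanding $\D(e^\mu\eta) = e^\mu(\D\eta + 2\nabla\mu\cdot\nabla\eta + (\D\mu+|\nabla\mu|^2)\eta)$, one reduces to showing $\int_{\mcO_T} w^+\,\mathcal{L}^*\eta\,d\xi dr \le 0$ is impossible unless $w^+\equiv 0$, where $\mathcal{L}^*\eta := \partial_r\eta + a e^\mu\D\eta + 2a e^\mu\nabla\mu\cdot\nabla\eta + a e^\mu(\D\mu+|\nabla\mu|^2)\eta$ is a (degenerate) backward parabolic operator with bounded, non-negative leading coefficient $a e^\mu$ and bounded lower-order coefficients. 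The standard Ole\u{\i}nik device is then to choose, for arbitrary $0\le\varphi\in C^\infty_c(\mcO_T)$, a solution $\eta=\eta_\varepsilon\ge 0$ of the regularized dual problem $\mathcal{L}^*_\varepsilon\eta_\varepsilon = \varphi$ (replacing $a e^\mu$ by $a e^\mu+\varepsilon>0$ and mollifying the coefficients), which exists, is non-negative, vanishes on $\mcP_T$, and enjoys the uniform-in-$\varepsilon$ bounds needed to pass to the limit; this forces $\int_{\mcO_T} w^+\varphi\,d\xi dr\le 0$ for all such $\varphi$, whence $w^+=0$, i.e.\ $Y^{(1)}\le Y^{(2)}$.

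The main obstacle is the construction of the dual test functions $\eta_\varepsilon$ with \emph{$\varepsilon$-independent} a priori bounds sufficient to control the extra lower-order terms $2a e^\mu\nabla\mu\cdot\nabla\eta_\varepsilon$ and $a e^\mu(\D\mu+|\nabla\mu|^2)\eta_\varepsilon$ coming from the spatial inhomogeneity of $\mu$; in the homogeneous case $\mu=\mu(t)$ these drift and zeroth-order terms are absent (only the time-rescaling $e^{-(m-1)\mu}$ survives, cf.\ \eqref{eqn:homog_transf}) and the argument is the textbook one. The lower-order terms are handled by absorbing the zeroth-order coefficient via a Gronwall-type exponential weight $\eta \mapsto e^{\lambda r}\eta$ and by estimating $\nabla\eta_\varepsilon$ in $L^2$ through an energy estimate for the dual equation (multiplying by $\eta_\varepsilon$ and integrating), using $\mu\in C^{0,2}(\bar\mcO_T)$ so that $\nabla\mu$, $\D\mu$ are bounded. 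Since $\eta|_{\mcP_T}=0$ is imposed and $\eta\ge 0$, the boundary flux term is disposed of as indicated; for the \emph{local} comparison needed later for finite speed of propagation one runs the same argument on a cylinder $[0,T]\times K$ with $K\Subset\mcO$ smooth, using the trace identification $\Phi(g)=\Phi(Y)$ on $\partial K$ noted after Definition~\ref{def:weak_soln}. Uniqueness of essentially bounded very weak solutions follows by applying the comparison in both directions.
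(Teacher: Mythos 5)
Your overall strategy coincides with the paper's: reduce to the transformed equation \eqref{eqn:trans_inhomog}, subtract the two very-weak inequalities, linearize by a bounded coefficient $a$ with $\Phi(e^{-\mu}Y^{(1)})-\Phi(e^{-\mu}Y^{(2)})=aY$, dispose of the initial and boundary terms by sign, and run an Ole\u{\i}nik-type duality argument with a regularized, non-degenerate dual problem. The paper carries this out in the appendix for the more general equation \eqref{eqn:general_trans} (Theorem \ref{thm:gen_comp}); the one genuinely different device is how the lower-order terms produced by the spatial inhomogeneity are absorbed: you propose a Gronwall-type exponential weight, while the paper uses an interval-splitting induction, choosing a partition $0=\tau_0<\dots<\tau_N=T$ so that $\rho_1^{(\ve)}/\rho_1^{(\ve)}(\tau_i)$ is $C^2$-close to $1$ on each subinterval (cf.\ \eqref{eqn:choice_tau_2}) and proving comparison interval by interval; the splitting also adapts to the degenerate situation (A2') needed elsewhere in the paper.

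Three points in your sketch need repair, one of them substantive. First, $a$ must be defined by the difference quotient wherever $Y^{(1)}\ne Y^{(2)}$ (and $0$ otherwise), as the paper does; setting $a\equiv 0$ on $\{w\le 0\}$ destroys the identity $\psi=aw$ there, so the substitution into the integral inequality is no longer valid — you cannot pass to $w^+$ before the test function is chosen. Second, the $\ve$-uniform a priori bound cannot be obtained by multiplying the dual equation by $\eta_\ve$: integrating $\int a_\ve\,\D(\rho_1^{(\ve)}\eta_\ve)\,\eta_\ve$ by parts produces $\nabla a_\ve$, which is not controlled uniformly in $\ve$, and even its good part only yields a degenerate-weighted gradient bound. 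The correct multiplier is (a cutoff times) $\D\eta_\ve$, which gives exactly the two estimates the limit passage requires, $\int_{\mcO_T}|\nabla\eta_\ve|^2\le C$ and the weighted bound $\int_{\mcO_T} a_\ve|\D\eta_\ve|^2\le C$ uniformly in $\ve$; the weighted bound is indispensable because the error term $\int Y(a-a_\ve)\,\D(\rho_1^{(\ve)}\eta_\ve)$ is estimated by Cauchy--Schwarz with weight $a_\ve\ge\ve$, after choosing the mollification of $a$ adapted to $Y$ (e.g.\ $\int|Y|^2(a\vee\ve-a_\ve)^2\le\ve^2$), while the coefficient error is handled via $\|\rho_1-\rho_1^{(\ve)}\|_{C^{0,2}}\le\ve^2$ together with $\|\eta_\ve\|_{H^2}\le C/\ve$. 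As written, your energy estimate step would fail. Third, mind the signs: with zero terminal and lateral data, nonnegativity of the dual solution requires a non-positive right-hand side (as in \eqref{ra_m:eqn:vp_defn}); with a nonnegative source you would get $\eta_\ve\le 0$ and the final inequality would not close. These are fixable within your scheme, and with them your argument becomes essentially the paper's proof modulo the exponential weight versus interval splitting.
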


The proof of a more general version of Theorem \ref{thm:comp} may be found in the Appendix \ref{sec:gen_comp}.

\subsection{Finite speed of propagation}\label{sec:finite_speed} We are going to prove bounds on the speed of propagation for \eqref{eqn:SPME}, based on estimates for the rate of hole-filling as in the deterministic case. As we have seen in Section \ref{sec:real}, the optimal bounds on the rate of collapse of balls have to depend on the driving signal. Since the perturbation now is spatially dependent, we expect worse estimates than in Proposition \ref{prop:real_valued}. 

On the other hand, since $\xi \mapsto \mu_t(\xi)$ is continuous and thus $\mu_t(\xi) \approx \mu_t(\xi_0)$ on small balls $B_R(\xi_0)$, locally in space the rate of expansion should be given as in Proposition \ref{prop:real_valued} with $\mu_r \equiv \mu_r(\xi_0)$. This line of thought leads to optimal bounds on the rate of collapse of asymptotically small balls, proven in Theorem \ref{thm:hole_filling_local_space} below.

Moreover, due to the continuity of $t \mapsto \mu_t(\xi)$ we have $\mu_t(\xi) \approx \mu_0(\xi)$ on small time intervals $[0,T]$. Therefore, we expect to recover the optimal bounds from the deterministic case at least for asymptotically small times $T$, which indeed is proven in Theorem \ref{thm:hole_filling_local_time} below. In case of spatially homogeneous perturbations this has been observed in Remark \ref{rmk:homog_local_time}.

\begin{theorem}[Hole-filling theorem for small balls]\label{thm:hole_filling_local_space}
   Let $\xi_0 \in \R^d$, $T,R  > 0$ and $X \in C((0,T] \times B_R(\xi_0))$ be an essentially bounded, non-negative, very weak subsolution to \eqref{eqn:SPME} with vanishing initial value $X_0$ on $B_R(\xi_0)$ and boundary value $g$ satisfying $H := \|e^\mu g\|_{L^\infty([0,T] \times \partial B_R(\xi_0))}  < \infty$. Define $F(t) := \int_0^t e^{-(m-1)\mu_r(\xi_0)}dr$ and
     $$  T_{stoch} :=  F^{-1}\left(R^2 \frac{C_{det}}{H^{m-1}} C_R \right),$$
   where $R \mapsto C_R$ is a continuous, non-increasing function with $\lim_{R \downarrow 0} C_R = 1$. 

   Then $X_t$ vanishes in $B_{R_{stoch}(t)}(\xi_0)$ for all $t \in [0,T_{stoch} \wedge T]$, where
     $$R_{stoch}(t) = R - \sqrt{F(t)} \left(\frac{H^{m-1}}{C_{det}}\right)^\frac{1}{2} C_R^{-\frac{1}{2}}.$$
  
  Note that for $R \approx 0$ we recover the optimal rate from Proposition \ref{prop:real_valued} with $\mu_r \equiv \mu_r(\xi_0)$.
\end{theorem}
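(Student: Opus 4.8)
Here is a plan for proving Theorem~\ref{thm:hole_filling_local_space}.

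The plan is to reduce, via the transformation of Section~\ref{sec:real}, to the deterministic hole-filling estimate of Lemma~\ref{lemma:det_hole}, absorbing the spatial inhomogeneity of the noise into the factor $C_R$ by a barrier argument on the small ball $B_R(\xi_0)$. First I would pass to $Y_t:=e^{\mu_t}X_t$; by definition this is an essentially bounded, non-negative, very weak subsolution to the Dirichlet problem for \eqref{eqn:trans_inhomog} on $(0,T]\times B_R(\xi_0)$ with vanishing initial value and boundary value $e^{\mu}g$, whose $L^\infty([0,T]\times\partial B_R(\xi_0))$-norm is $H$. Writing $\bar\mu_r:=\mu_r(\xi_0)$ and $F$ as in the statement, the theorem then reduces to exhibiting a function $V$ on $[0,T_{stoch}\wedge T]\times B_R(\xi_0)$ which is a very weak supersolution to \eqref{eqn:trans_inhomog} there, satisfies $V(\cdot,0)\ge0$ and $V\ge H$ on $[0,T_{stoch}\wedge T]\times\partial B_R(\xi_0)$, and vanishes on $B_{R_{stoch}(t)}(\xi_0)$ at time $t$: the comparison principle of Theorem~\ref{thm:comp}, in its local form on subdomains (Appendix~\ref{sec:gen_comp}), then yields $Y\le V$, so that $0\le X_t=e^{-\mu_t}Y_t\le e^{-\mu_t}V_t=0$ on $B_{R_{stoch}(t)}(\xi_0)$.

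For the barrier I would take the explicit deterministic hole-filling supersolution $\mathcal{U}$ constructed in the proof of Lemma~\ref{lemma:det_hole} (see \cite{V07}) on $B_R(\xi_0)$ with boundary height $H$: thus $\mathcal{U}\ge0$, $\mathcal{U}\ge H$ on $\partial B_R(\xi_0)$, $\mathcal{U}(\cdot,\tau)$ vanishes on $B_{R_{det}(\tau)}(\xi_0)$ with $R_{det}(\tau)=R-\sqrt{\tau}\,(H^{m-1}/C_{det})^{1/2}$, and $\mathcal{U}$ is a very weak supersolution to \eqref{eqn:PME}, classical on $\{\mathcal{U}>0\}$ with $\D\Phi(\mathcal{U})\ge0$ there; from its explicit form (a fixed power of a profile of size $O(R^2)$ and gradient $O(R)$ on $B_R(\xi_0)$) one also reads off the structural bounds $|\nabla\Phi(\mathcal{U})|\le c_1R\,\D\Phi(\mathcal{U})$ and $\Phi(\mathcal{U})\le c_2R^2\,\D\Phi(\mathcal{U})$ on $\{\mathcal{U}>0\}\cap B_R(\xi_0)$. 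I then set, for a constant $C_R\in(0,1]$ chosen below,
\begin{equation*}
  V(\xi,t):=\mathcal{U}\big(\xi,\,C_R^{-1}F(t)\big).
\end{equation*}
Then $V$ vanishes on $B_{R_{det}(C_R^{-1}F(t))}(\xi_0)=B_{R_{stoch}(t)}(\xi_0)$, it is defined while $C_R^{-1}F(t)\le R^2C_{det}/H^{m-1}$, i.e.\ for $t\le F^{-1}(R^2C_{det}C_R/H^{m-1})=T_{stoch}$, and $V(\cdot,0)=\mathcal{U}(\cdot,0)\ge0$ and $V\ge H$ on $\partial B_R(\xi_0)$ are inherited from $\mathcal{U}$.

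The core of the argument is to check that $V$ is a supersolution to \eqref{eqn:trans_inhomog} on $\{V>0\}$ (on $\{V=0\}$ this is trivial). Since $F'=e^{-(m-1)\bar\mu}$, the chain rule and the supersolution property of $\mathcal{U}$ give $\partial_tV=C_R^{-1}e^{-(m-1)\bar\mu_t}\,\partial_\tau\mathcal{U}\ge C_R^{-1}e^{-(m-1)\bar\mu_t}\,\D\Phi(V)$ (all evaluated at $\tau=C_R^{-1}F(t)$). Expanding the right-hand side of \eqref{eqn:trans_inhomog} (using $\Phi(e^{-\mu}V)=e^{-m\mu}\Phi(V)$ as $V\ge0$),
\begin{equation*}
  e^{\mu}\D\Phi(e^{-\mu}V)=e^{(1-m)\mu}\Big(\D\Phi(V)-2m\,\nabla\mu\cdot\nabla\Phi(V)+\big(m^2|\nabla\mu|^2-m\D\mu\big)\Phi(V)\Big),
\end{equation*}
and using $\D\Phi(V)\ge0$, the structural bounds on $\mathcal{U}$ (whence $|\nabla\Phi(V)|\le c_1R\,\D\Phi(V)$ and $\Phi(V)\le c_2R^2\,\D\Phi(V)$), the bounds $|\nabla\mu_r|,|\D\mu_r|\le L$ on $B_R(\xi_0)$ (with $L<\infty$ independent of $R$), and $|\mu_r(\xi)-\bar\mu_r|\le\omega(R)$ on $[0,T]\times B_R(\xi_0)$ with $\omega(R)\le R\sup_{r\in[0,T]}\|\nabla\mu_r\|_{L^\infty(\bar\mcO)}$, one obtains
\begin{equation*}
  e^{\mu}\D\Phi(e^{-\mu}V)\le e^{-(m-1)\bar\mu_t}\big(1+\kappa(R)\big)\,\D\Phi(V)
\end{equation*}
for some $\kappa(R)\ge0$, an explicit function of $R$ and the data of order $O(R)$, which absorbs both $e^{(m-1)\omega(R)}-1$ and the first- and zeroth-order terms, and satisfies $\kappa(R)\to0$ as $R\downarrow0$. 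Hence $V$ is a supersolution provided $C_R^{-1}\ge1+\kappa(R)$, so I fix $C_R:=(1+\kappa(R))^{-1}$, which (taking $\kappa$ non-decreasing with $\kappa(0)=0$) is continuous, non-increasing in $R$, with $\lim_{R\downarrow0}C_R=1$. Letting $R\downarrow0$ then gives $C_R\to1$, $R_{stoch}(t)\to R-\sqrt{F(t)}(H^{m-1}/C_{det})^{1/2}$ and $T_{stoch}\to F^{-1}(R^2C_{det}/H^{m-1})$, recovering the rate of Proposition~\ref{prop:real_valued} with $\mu_r\equiv\mu_r(\xi_0)$ — the final remark.

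I expect the main obstacle to be exactly this verification, and more precisely the interplay between the deterministic barrier and the error terms: one needs $\mathcal{U}$ not only as a hole-filling supersolution with the sharp constant $C_{det}$, but with enough quantitative slack together with the structural bounds on $\nabla\Phi(\mathcal{U})$ and $\Phi(\mathcal{U})$ relative to $\D\Phi(\mathcal{U})$, so that the first- and zeroth-order perturbations caused by the spatial variation of $\mu$ — which are $O(R)$ relative to the principal part on $B_R(\xi_0)$ — can be absorbed into $C_R$ while still keeping $C_R\downarrow1$. A secondary, routine point is the local comparison principle on $B_R(\xi_0)$ with inhomogeneous Dirichlet data, supplied by the general version of Theorem~\ref{thm:comp} in Appendix~\ref{sec:gen_comp}.
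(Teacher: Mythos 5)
Your overall strategy coincides with the paper's: pass to $Y=e^{\mu}X$, freeze the noise at $\xi_0$ through the time change $F(t)=\int_0^t e^{-(m-1)\mu_r(\xi_0)}dr$, compare with an explicit deterministic-type barrier, and absorb the $O(R)$ errors caused by the spatial variation of $\mu$ into a constant $C_R\to 1$; the expansion of $e^{\mu}\D\Phi(e^{-\mu}V)$ and the absorption via bounds of the form $|\nabla\Phi|\le c_1R\,\D\Phi$, $\Phi\le c_2R^2\,\D\Phi$ is exactly what the paper does. The step that does not stand as written is your barrier $\mathcal{U}$: the proof of Lemma \ref{lemma:det_hole} in \cite{V07} does not produce a single supersolution on $B_R(\xi_0)$ whose zero set at time $\tau$ is the full ball $B_{R_{det}(\tau)}(\xi_0)$. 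What that proof (and the paper) uses is the family of vertex barriers $W(t,\xi,\xi_1)=\tilde C\,|\xi-\xi_1|^{2/(m-1)}(\tilde T_r-t)^{-1/(m-1)}$, each of which vanishes only at its center $\xi_1$; the shrinking-ball conclusion is obtained by letting $\xi_1$ range over $B_{R_1}(\xi_0)$ and applying the comparison on the sub-ball $B_r(\xi_1)$, $r=\dist(\xi_1,\partial B_R(\xi_0))$, with a blow-up time $\tilde T_r$ that depends on $r$. A single ball-vanishing supersolution with the sharp constant $C_{det}$ and your structural bounds would have to be constructed and verified separately (in particular as a very weak supersolution across its moving free boundary, where $\Phi(\mathcal{U})$ is no longer $C^2$ for $m>2$), and this is not supplied by your citation.

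The gap is, however, one of packaging rather than of substance: your structural bounds do hold for the explicit vertex barriers, since $\Phi(W)\propto|\xi-\xi_1|^{2m/(m-1)}$ is $C^2$ with nonnegative Laplacian and $|\nabla\Phi(W)|/\D\Phi(W)$, $\Phi(W)/\D\Phi(W)$ are of order $r$ and $r^2$ on $B_r(\xi_1)$. If you run your computation for each fixed center $\xi_1$ (with the time change built into the barrier as $(F(\tilde T_r)-F(t))^{-1/(m-1)}$ and $\tilde C^{m-1}=C_{det}C_R$ in place of your rescaling $\tau=C_R^{-1}F(t)$), impose $W\ge He^{\mu}$ on $[0,\tilde T_r)\times\partial B_r(\xi_1)$ to fix $\tilde T_r=F^{-1}(r^2C_{det}C_R/H^{m-1})$, apply Theorem \ref{thm:comp} to get $Y(t,\xi_1)\le W(t,\xi_1,\xi_1)=0$, and then vary $\xi_1$ and resolve for $r$, you obtain $R_{stoch}(t)$ and $T_{stoch}$ exactly as in the paper; note also that with this formulation the factor $e^{(m-1)(\mu_t(\xi)-\mu_t(\xi_0))}$ is absorbed into $C_R$ precisely as you propose.
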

\begin{proof}
  Since $X$ is a very weak subsolution to \eqref{eqn:SPME} with initial value $X_0 \equiv 0$ and boundary value $g$, $Y := e^\mu X$ is a very weak subsolution to \eqref{eqn:trans_inhomog} with initial value $Y_0 \equiv 0$ and boundary value $e^\mu g$. 

  For $\xi_1 \in B_R(\xi_0)$, $\td T \in (0,T]$, $r \in (0,\dist(\xi_1,\partial B_R(\xi_0))]$ we construct an explicit supersolution to \eqref{eqn:trans_inhomog} in $[0,\td T] \times B_r(\xi_1)$. Let
    $$W(t,\xi,\xi_1) := \td C |\xi-\xi_1|^\frac{2}{m-1}\left(F(\td  T)-F(t)\right)^{\frac{-1}{m-1}}, \quad t \in [0,\td T), \xi \in B_r(\xi_1),$$
  where $\td C$ will be chosen below (only depending on $R$) and $F$ is as in Section \ref{sec:real} for noise frozen at $\xi_0$, i.e.
    $$F(t) := \int_0^t e^{-(m-1)\mu_r(\xi_0)} dr.$$
  We compute:
  \begin{align*}
    \partial_t W(t,\xi,\xi_1) = \frac{1}{m-1} \td C |\xi-\xi_1|^\frac{2}{m-1} \left(F(\td  T)-F(t)\right)^{\frac{-m}{m-1}}e^{-(m-1)\mu_t(\xi_0)}, 
  \end{align*}
  for all $(t,\xi) \in [0,\td T) \times B_r(\xi_1)$ and 
  \begin{align*}
    &\D \left(e^{-\mu_t(\xi)} W(t,\xi,\xi_1)\right)^m \\
    &\hskip0pt \le \frac{\td C^m}{(m-1)C_{det}} \left(F(\td  T)-F(t)\right)^\frac{-m}{m-1} |\xi-\xi_1|^\frac{2}{m-1}\\
    &\hskip15pt \left( e^{-m \mu_{t}(\xi)} + \frac{2(m-1)}{d(m-1)+2} |\nabla e^{-m \mu_{t}(\xi)}| r + (m-1)C_{det} r^2 |\D e^{-m \mu_{t}(\xi)}| \right) \\
    &\hskip0pt \le \frac{\td C^m}{(m-1)C_{det}}  \left(F(\td  T)-F(t)\right)^\frac{-m}{m-1} |\xi-\xi_1|^\frac{2}{m-1} e^{-m \mu_{t}(\xi)}\\
    &\hskip15pt \left( 1 + \frac{2(m-1)m}{d(m-1)+2}|\nabla \mu_{t}(\xi)| r + m (m-1) C_{det} r^2 (m|\nabla \mu_t(\xi)|^2+\D |\mu_t(\xi)|) \right) \\
     &\le \frac{\td C^m}{(m-1)C_{det}} \left(F(\td T)-F(t)\right)^\frac{-m}{m-1} |\xi-\xi_1|^\frac{2}{m-1} e^{-m \mu_{t}(\xi)} \\
       &\hskip15pt\Big(1+C(d,m)R(1+R) \|\mu\|_{C^{0,2}([0,\td T]\times B_R(\xi_0))} \Big), 
  \end{align*}
  for all $(t,\xi) \in [0,\td T) \times B_r(\xi_1)$.
  We conclude that 
    $$  \partial_t W(t,\xi,\xi_1) \ge e^{\mu_t(\xi)} \D \left(e^{-\mu_t(\xi)} W(t,\xi,\xi_1)\right)^m $$
  on $[0,\td T) \times B_r(\xi_1)$ if
    $$  e^{-(m-1)\|\mu(\xi_0)-\mu(\cdot)\|_{C^0([0,T]\times B_R(\xi_0))}} \ge \frac{\td C^{m-1}}{C_{det}}  \Big(1+C(d,m)R(1+R)\|\mu\|_{C^{0,2}([0,T]\times B_R(\xi_0))}\Big),$$
  which is satisfied if we choose $\td C^{m-1} = C_{det} C_R$ with
  \begin{align*}
    C_R
    &:= \frac{e^{-(m-1)\|\mu(\xi_0)-\mu(\cdot)\|_{C^0([0,T]\times B_R(\xi_0))}}}{1+C(d,m)R(1+R)\|\mu\|_{C^{0,2}([0,T]\times B_R(\xi_0))}}.
  \end{align*}
  We note that $R \mapsto C_R$ is continuous, non-increasing in $R$ and $\lim_{R \downarrow 0} C_R = 1$. In contrast, $C_R$ does not necessarily converge to $1$ for $T \to 0$. Thus, the bounds become optimal locally in space but not locally in time.
  
  In order to derive the upper bound $Y(t,\xi) \le W(t,\xi,\xi_1)$ on $[0,\td T) \times B_r(\xi_1)$ we need $g(t,\xi)e^{\mu_t(\xi)} \le W(t,\xi,\xi_1)$ for a.a.\ $(t,\xi) \in [0,\td T) \times \partial B_r(\xi_1)$. For this it is sufficient to have
  \begin{equation*}
     W(t,\xi,\xi_1) = \td C |\xi-\xi_1|^\frac{2}{m-1}\left(F(\td T)-F(t)\right)^{\frac{-1}{m-1}} \ge H,
  \end{equation*}
  for a.a.\ $(t,\xi) \in [0,\td T) \times \partial B_r(\xi_1)$. This is satisfied if we choose $\td T = \td T_r$ by
  \begin{equation}\label{eqn:T(r)}
    \td T_r :=  F^{-1}\left(\frac{\td C^{m-1} r^2}{H^{m-1}}\right) = F^{-1}\left( r^2 \frac{C_{det}}{H^{m-1}}C_R\right).
  \end{equation}
  By Theorem \ref{thm:comp} and by continuity of $Y,W$ we conclude
    $$ 0 \le Y(t,\xi_1) \le W(t,\xi_1,\xi_1) = 0, \quad \forall t \in [0,\td T_r].$$

  Let $R_1 \in (0,R)$, $\xi_1 \in B_{R_1}(\xi_0)$ and $r = \dist(\xi_1,\partial B_R(\xi_0)) \ge R-R_1 > 0$. Resolving \eqref{eqn:T(r)} for $r$ yields
    $$ R(T) := R_1 = R - \sqrt{F(T)} \left(\frac{H}{\td C}\right)^\frac{m-1}{2}  = R - \sqrt{F(T)} \left(\frac{H^{m-1}}{C_{det}}\right)^\frac{1}{2} C_R^{-\frac{1}{2}}.    $$
  Hence,
    $$ Y(t,\xi) = 0, \quad \forall \xi \in B_{R(t)}(\xi_0),\ t \in [0,T_{stoch} \wedge T],$$
  where $T_{stoch} = T_R = F^{-1}\left( R^2 \frac{C_{det}}{H^{m-1}}C_R\right)$.
\end{proof}

\begin{remark}\label{rmk:noise_intens}
  Due to the explicit form of the estimates and the constant $C_R$ in Theorem \ref{thm:hole_filling_local_space}, the dependence of the bounds on the strength of the noise is obvious. In particular, when the noise intensity $\sum_{k=1}^N \|f_k\|_{C^2(B_R(\xi_0))}$ decreases to $0$, then the bounds from Theorem \ref{thm:hole_filling_local_space} approach the corresponding deterministic, optimal ones.
\end{remark}

We will now derive a second bound on the rate of collapse of balls for \eqref{eqn:SPME}. In contrast to Theorem \ref{thm:hole_filling_local_space} the construction of a suitable supersolution will be based on a temporal discretization, i.e.\ on freezing the noise at time $t=0$.

\begin{theorem}[Hole-filling theorem for small times]\label{thm:hole_filling_local_time}
  Let $\xi_0 \in \R^d$, $T,R  > 0$ and $X \in C((0,T] \times B_R(\xi_0))$ be an essentially bounded, non-negative, very weak subsolution to \eqref{eqn:SPME} with vanishing initial value $X_0$ on $B_R(\xi_0)$ and boundary value $g$ satisfying  $H := \|g\|_{L^\infty([0,T] \times \partial B_R(\xi_0))}  < \infty$.
  Define $T_{stoch}$ by
    $$ T_{stoch} := \sup\Bigg\{\td T \in [0,T] \Big|\  \td T C_{\td T} \le  R^2 \frac{C_{det}}{H^{m-1}}\Bigg\} ,$$
  where $t \mapsto C_t$ is a continuous, non-decreasing function with $\lim_{t \downarrow 0} C_t = 1$.

  Then $X_t$ vanishes in $B_{R_{stoch}(t)}(\xi_0)$ for all $t \in [0,T_{stoch}]$, where
     $$R_{stoch}(t) = R - \sqrt{t} \left(\frac{H^{m-1}}{C_{det}}\right)^\frac{1}{2} \sqrt{C_t}.$$
  Note that for $t \approx 0$ we recover the optimal rate from the deterministic case.
\end{theorem}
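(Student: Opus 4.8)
The plan is to mimic the proof of Theorem~\ref{thm:hole_filling_local_space}, but instead of freezing the spatial variable we freeze the \emph{temporal} variable at $t=0$. As before, set $Y:=e^{\mu}X$, which is a non-negative very weak subsolution to \eqref{eqn:trans_inhomog} with vanishing initial value on $B_R(\xi_0)$. Fix $\xi_1\in B_R(\xi_0)$, a time $\td T\in(0,T]$ and a radius $r\in(0,\dist(\xi_1,\partial B_R(\xi_0))]$, and try the same ansatz for a local supersolution on $[0,\td T)\times B_r(\xi_1)$, namely
\begin{equation*}
  W(t,\xi,\xi_1):=\td C\,|\xi-\xi_1|^{\frac{2}{m-1}}\,(\td T-t)^{\frac{-1}{m-1}},
\end{equation*}
but now with the \emph{linear} time factor $(\td T-t)$ rather than $(F(\td T)-F(t))$, since we are no longer rescaling time. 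The constant $\td C$ will be chosen depending on $\td T$ (through the modulus of continuity of $\mu$ on $[0,\td T]$), which is the structural difference from the earlier proof where $\td C$ depended on $R$.

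The key computation is to check $\partial_t W\ge e^{\mu_t(\xi)}\D(e^{-\mu_t(\xi)}W)^m$ on $[0,\td T)\times B_r(\xi_1)$. Here $\partial_t W=\frac{1}{m-1}\td C|\xi-\xi_1|^{\frac{2}{m-1}}(\td T-t)^{\frac{-m}{m-1}}$, with no exponential factor. On the right-hand side, the Laplacian of $(e^{-\mu_t(\xi)}W)^m$ produces, besides the leading term (which yields the deterministic constant $C_{det}$), correction terms involving $|\nabla\mu_t|$ and $\D\mu_t$, exactly as in the estimate displayed in the proof of Theorem~\ref{thm:hole_filling_local_space}; these are controlled by $C(d,m)R(1+R)\|\mu\|_{C^{0,2}([0,\td T]\times B_R(\xi_0))}$. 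In addition, the factor $e^{-m\mu_t(\xi)}$ appearing on the right must be absorbed, and since there is no matching factor on the left, one pays $e^{(m-1)\|\mu_t(\cdot)\|_{C^0([0,\td T]\times B_R(\xi_0))}}$ — but because $\mu_0\equiv 0$ (as $z_0^{(k)}=0$, or after recentring), continuity of $t\mapsto\mu_t$ forces $\|\mu\|_{C^0([0,\td T]\times\cdots)}\to 0$ as $\td T\downarrow 0$. Collecting everything, the supersolution inequality holds provided we set $\td C^{m-1}=C_{det}\,C_{\td T}^{-1}$ with
\begin{equation*}
  C_{\td T}:=e^{(m-1)\|\mu\|_{C^0([0,\td T]\times B_R(\xi_0))}}\Big(1+C(d,m)R(1+R)\|\mu\|_{C^{0,2}([0,\td T]\times B_R(\xi_0))}\Big),
\end{equation*}
which is continuous, non-decreasing in $\td T$, with $\lim_{\td T\downarrow 0}C_{\td T}=1$. (One should double-check whether the $R(1+R)$-factor genuinely tends to $0$ with $\td T$; if $\|\mu\|_{C^{0,2}}$ does not vanish as $\td T\downarrow 0$ because of the second spatial derivatives, the cleanest fix is to note that $\mu_t=\mu_t-\mu_0$ has $C^{0,2}$-norm controlled by $\sup_{t\le\td T}\sum_k|z_t^{(k)}|\,\|f_k\|_{C^2}$, which does go to zero, so $C_{\td T}\to 1$ as claimed.)

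The boundary comparison then requires $W(t,\xi,\xi_1)\ge e^{\mu_t(\xi)}g(t,\xi)$ on $[0,\td T)\times\partial B_r(\xi_1)$; since $H=\|g\|_{L^\infty}$ here (the bound is stated with $g$, not $e^\mu g$, the extra $e^{\mu}$-factor being absorbed into $C_{\td T}$), it suffices that $\td C\,r^{\frac{2}{m-1}}(\td T-t)^{\frac{-1}{m-1}}\ge H$ for all $t<\td T$, i.e. $\td C\,r^2\ge H^{m-1}\td T$, which determines the admissible time
\begin{equation*}
  \td T_r:=\frac{\td C^{m-1}r^2}{H^{m-1}}=r^2\,\frac{C_{det}}{H^{m-1}}\,C_{\td T_r}^{-1}.
\end{equation*}
By Theorem~\ref{thm:comp} and continuity of $Y,W$ we get $0\le Y(t,\xi_1)\le W(t,\xi_1,\xi_1)=0$ for $t\in[0,\td T_r]$. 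Finally, for $R_1\in(0,R)$ and $\xi_1\in B_{R_1}(\xi_0)$ one has $r\ge R-R_1$; solving the relation $r=R-R_1$ together with the displayed formula for $\td T_r$ (and using monotonicity of $t\mapsto C_t$ to replace $C_{\td T_r}$ by $C_t$ at the relevant time) yields $R_{stoch}(t)=R-\sqrt{t}\,(H^{m-1}/C_{det})^{1/2}\sqrt{C_t}$, valid for all $t$ up to $T_{stoch}=\sup\{\td T\le T: \td T\,C_{\td T}\le R^2 C_{det}/H^{m-1}\}$. I expect the main obstacle to be precisely the claim $C_{\td T}\to 1$ as $\td T\downarrow 0$: one must be careful that the $C^{0,2}$-norm entering the correction terms is that of $\mu_t-\mu_0$ and genuinely small for small $\td T$, rather than a fixed quantity, otherwise only the $e^{(m-1)\|\cdot\|_{C^0}}$-factor goes to $1$ and the bound is not asymptotically optimal as claimed; recentring $\mu$ so that $\mu_0\equiv 0$ and tracking the $\td T$-dependence through the $z_t^{(k)}$'s resolves this.
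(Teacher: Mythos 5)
Your strategy is essentially the paper's: construct the separable barrier $\td C\,|\xi-\xi_1|^{\frac{2}{m-1}}(\td T-t)^{\frac{-1}{m-1}}$ on $[0,\td T)\times B_r(\xi_1)$, check the supersolution inequality for \eqref{eqn:trans_inhomog} with correction terms controlled by the temporal modulus of continuity of $\mu$, compare via Theorem \ref{thm:comp}, and resolve the resulting implicit relation between $\td T$ and $r$ through the supremum defining $T_{stoch}$. The one genuine issue is precisely the point you flag at the end, and your proposed fix does not close it. Your error terms come out as $\|\mu\|_{C^{0,2}([0,\td T]\times B_R(\xi_0))}$ and $\|\mu\|_{C^{0}}$, which do \emph{not} tend to $0$ as $\td T\downarrow 0$ unless $\mu_0\equiv 0$; the theorem makes no assumption $z_0^{(k)}=0$ (that normalization only appears in the RDS setting of Section \ref{sec:ra}), and in the application inside Theorem \ref{thm:propagation_time} the hole-filling estimate is used at arbitrary starting times $s$, where the signal at the new time origin is nonzero. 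Your repair ``$\|\mu\|_{C^{0,2}}\le\sup_{t\le\td T}\sum_k|z_t^{(k)}|\,\|f_k\|_{C^2}\to 0$'' is circular: that supremum tends to $\sum_k|z_0^{(k)}|\,\|f_k\|_{C^2}$, not to $0$, unless $z_0=0$. To make the argument complete you must either justify the asserted recentring --- i.e.\ show that $X$ is a very weak subsolution with respect to $z$ iff it is one with respect to $z-z_0$, which amounts to checking in Definition \ref{def:weak_soln} that the substitutions $Y\mapsto e^{-\mu_0}Y$, $\eta\mapsto e^{\mu_0}\eta$ (legitimate because $\mu_0$ is time-independent, smooth, bounded above and below) carry one weak formulation onto the other, with the boundary datum transforming consistently --- or, as the paper does, build the offset directly into the barrier by taking $W(t,\xi,\xi_1)=\td C\,e^{\mu_0(\xi)}|\xi-\xi_1|^{\frac{2}{m-1}}(\td T-t)^{\frac{-1}{m-1}}$. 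With that weight every correction term automatically involves only $\mu_0-\mu_\cdot$, so $C_{\td T}\to 1$ holds without any normalization of the driving signal.

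A smaller bookkeeping point: the boundary comparison $W\ge e^{\mu_t}g$ on $\partial B_r(\xi_1)$ costs a second exponential factor $e^{(m-1)\|\mu_\cdot-\mu_0\|_{C^0}}$, which you say is ``absorbed into $C_{\td T}$'' although your displayed $C_{\td T}$ contains only the single exponential already spent on the supersolution inequality; the paper's constant accordingly carries $e^{2(m-1)\|\mu_0-\mu_\cdot\|_{C^0}}$ in the denominator. This does not affect the stated conclusion (the extra factor still tends to $1$ as $\td T\downarrow 0$ and is monotone), but it must be included for your choice of $\td C$ and of $\td T_r$ to be mutually consistent.
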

\begin{proof}
  The proof proceeds similarly to Theorem \ref{thm:hole_filling_local_space}. Hence, let $Y := e^\mu X$ be a very weak subsolution to \eqref{eqn:trans_inhomog} with initial value $Y_0 \equiv 0$ and boundary value $e^\mu g$.

  For $\xi_1 \in B_R(\xi_0)$, $\td T \in (0,T]$, $r \in (0,\dist(\xi_1,\partial B_R(\xi_0))]$ we again construct an explicit supersolution to \eqref{eqn:trans_inhomog} in $[0,\td T) \times B_r(\xi_1)$:
    $$W(t,\xi,\xi_1) := \td C e^{\mu_{0}(\xi)}|\xi-\xi_1|^\frac{2}{m-1}\left(\td T-t\right)^{\frac{-1}{m-1}}, \quad t \in [0,\td T), \xi \in B_r(\xi_1),$$
  where $\td C$ will be chosen below, depending on $\td T,R$ only. We compute:
  \begin{align*}
    \partial_t W(t,\xi,\xi_1) = \frac{1}{m-1} \td C e^{\mu_{0}(\xi)} |\xi-\xi_1|^\frac{2}{m-1} \left(\td T-t\right)^{\frac{-m}{m-1}}, 
  \end{align*}
  for all $(t,\xi) \in [0,\td T) \times B_r(\xi_1)$ and 
  \begin{align*}
    &\D \left(e^{-\mu_t(\xi)} W(t,\xi,\xi_1)\right)^m  \\
    &\hskip0pt \le \frac{\td C^m}{(m-1) C_{det}} \left(\td T-t\right)^\frac{-m}{m-1} |\xi-\xi_1|^\frac{2}{m-1} \Big( e^{m(\mu_{0}(\xi)-\mu_{t}(\xi))} \\
    &\hskip10pt + \frac{2(m-1)}{2  + d (m-1)} |\nabla e^{m(\mu_{0}(\xi)-\mu_{t}(\xi))}| R + (m-1)C_{det} R^2 |\D e^{m(\mu_{0}(\xi)-\mu_{t}(\xi))}| \Big) \\
    &\hskip0pt \le  \frac{\td C^m}{(m-1) C_{det}} \left(\td T-t\right)^\frac{-m}{m-1} |\xi-\xi_1|^\frac{2}{m-1} e^{m(\mu_{0}(\xi)-\mu_{t}(\xi))} \\
    &\hskip15pt \Big( 1 + C(d,m)R(1 + R) \|\mu_{0}-\mu_{\cdot}\|_{C^{0,2}( [0,\td T] \times B_R(\xi_0))} \Big)
  \end{align*}
  for all $(t,\xi) \in [0,\td T) \times B_r(\xi_1)$.
  We conclude that 
    $$  \partial_t W(t,\xi,\xi_1) \ge e^{\mu_t(\xi)} \D \left(e^{-\mu_t(\xi)} W(t,\xi,\xi_1)\right)^m $$
  on $[0,\td T) \times B_r(\xi_1)$ if
    $$ e^{(m-1)(\mu_{t}(\xi)-\mu_0(\xi))} \ge \frac{\td C^{m-1}}{C_{det}} \Big( 1 + C(d,m)R (1 + R) \|\mu_{0}-\mu_{\cdot}\|_{C^{0,2}([0,\td T] \times B_R(\xi_0))}  \Big),$$
  for all $(t,\xi) \in [0,\td T) \times B_r(\xi_1)$, which is satisfied for the choice
    $$\td C^{m-1} = \frac{C_{det}}{C_{\td T}}e^{(m-1)\|\mu_\cdot-\mu_0\|_{C^0([0,\td T] \times \partial B_R(\xi_0))}}$$
  with
  \begin{align*}
    C_{\td T} := \frac{ 1 + C(d,m)R (1 + R) \|\mu_{0}-\mu_{\cdot}\|_{C^{0,2}([0,\td T]\times B_R(\xi_0))} }{ e^{-2(m-1) \|\mu_{0}-\mu_\cdot\|_{C^0([0,\td T]\times B_R(\xi_0))}  }}.
  \end{align*}
    
  In order to derive the upper bound $Y(t,\xi) \le W(t,\xi,\xi_1)$ on $[0,\td T) \times B_r(\xi_1)$ we need $g(t,\xi) e^{\mu_t}(\xi) \le W(t,\xi,\xi_1)$ for a.e.\ $(t,\xi) \in [0,\td T) \times \partial B_r(\xi_1)$. For this to be true it is sufficient to have
  \begin{equation*}
     W(t,\xi,\xi_1) = \td C e^{\mu_{0}(\xi)}|\xi-\xi_1|^\frac{2}{m-1}\left(\td T-t\right)^{\frac{-1}{m-1}} \ge H e^{\mu_t (\xi)},
  \end{equation*}
  for a.a.\ $(t,\xi) \in [0,\td T) \times \partial B_r(\xi_1)$. This is satisfied if 
  \begin{equation*}
      \frac{\td T e^{(m-1)\|\mu_\cdot-\mu_0\|_{C^0([0,\td T] \times \partial B_R(\xi_0))}}}{\td C^{m-1}} \le  r^2 \frac{1}{H^{m-1}},
  \end{equation*}
  for which in turn it is sufficient to have
  \begin{equation}\label{eqn:T(r)_1}\begin{split}
      \td T C_{\td T} \le r^2 \frac{C_{det}}{H^{m-1}}.
  \end{split}\end{equation}
  Since the left hand side is continuous in $\td T$ we may choose $\td T$ as
  \begin{equation*}
     \td T := \sup\Big\{\td T \in [0,T] \Big|\ \td T C_{\td T} \le r^2\frac{C_{det}}{H^{m-1}} \Big\}.
  \end{equation*}

Note that $\td T \mapsto C_{\td T} > 0$ is continuous, non-decreasing and 
  $$ C_{\td T} \to 
    \begin{cases}
        1,& \quad  \text{ for } \td T \to 0 \\
        e^{2(m-1) \|\mu_{0}-\mu_\cdot\|_{C^0([0,\td T]\times \{\xi_0\})}},& \quad  \text{ for } R \to 0,
    \end{cases} $$
  i.e.\ we recover the optimal constant from the deterministic case for asymptotically small time, while locally in space the estimates will not be optimal.

  Let now $R_1 \in (0,R)$, $\xi_1 \in B_{R_1}(\xi_0)$ and $r = \dist(\xi_1,\partial B_R(\xi_0)) \ge R-R_1 > 0.$ 
  By Theorem \ref{thm:comp} and by continuity of $Y,W$ we conclude
    $$ 0 \le Y(t,\xi_1) \le W(t,\xi_1,\xi_1) = 0, \quad \forall t \in [0,\td T(r)].$$
  Resolving \eqref{eqn:T(r)_1} for $R_1$ yields
    $$ R(T) := R_1 = R - \sqrt{T} \left(\frac{H^{m-1}}{C_{det}}\right)^\frac{1}{2} \sqrt{C_T}.    $$
  Hence,
    $$ Y(t,\xi) = 0, \quad \forall \xi \in B_{R(t)}(\xi_0),\ t \in [0,T_{stoch}],$$
  where
  \begin{align*}
     T_{stoch} 
     &:= \td T(R) \\
     &:= \sup\Bigg\{\td T \in [0,T] \Big|\ \td T C_{\td T} \le R^2 \frac{C_{det}}{H^{m-1}} \Bigg\}.
  \end{align*}
\end{proof}

We are now ready to derive bounds on the speed of propagation for \eqref{eqn:SPME}. We give two formulations of this property 

\begin{theorem}[Finite speed of propagation]\label{thm:propagation}
  Let $X \in C((0,T]\times\mcO)$ be an essentially bounded, non-negative, very weak subsolution to the homogeneous Dirichlet problem to \eqref{eqn:SPME} and set $H := \|e^\mu X\|_{L^\infty(\mcO_T)}$. Then, for each $s \in [0,T]$ and every $h > 0$ there is a time-span $T_h > 0$ such that
   \begin{equation}\label{eqn:finite_prop_1}
       \supp (X_{s+t}) \subseteq B_h(supp (X_s)), \quad \forall t \in [0,T_h \wedge (T-s)].
   \end{equation}
   More precisely, $T_h$ is given by
    $$ T_h :=  F_h^{-1}\left(h^2 \frac{C_{det}}{H^{m-1}} C_h \right),$$
   where $ F_h(t) := \int_0^t e^{-(m-1)\inf_{\xi_0 \in \partial B_h(\supp(X_s))}\mu_r(\xi_0)}dr$ and $h \mapsto C_h$ is a continuous, non-increasing function satisfying $\lim_{h \downarrow 0} C_h = 1$. In particular,
     $$ \left| T_h - F_0^{-1}\left(h^2 \frac{C_{det}}{H^{m-1}}\right)\right| \to 0,\quad \text{for } h\to 0,$$
   with $ F_0(t) = \int_0^t e^{-(m-1)\inf_{\xi_0 \in \partial \overline{\supp(X_s)}}\mu_r(\xi_0)}dr $.
\end{theorem}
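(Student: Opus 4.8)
The plan is to follow the deterministic Theorem~\ref{thm:det_prop}: apply the local hole--filling bound of Theorem~\ref{thm:hole_filling_local_space}, with radius exactly $h$, at every point of the sphere $\partial B_h(\supp(X_s))$, and then propagate the resulting vanishing over the whole exterior region by comparison. Fix $s\in[0,T]$, $h>0$, and put $K:=\supp(X_s)$; we may assume $K\neq\emptyset$ and $B_h(K)\not\supseteq\mcO$, the conclusion being trivial otherwise. By Proposition~\ref{prop:u_ex} the map $t\mapsto X_{s+t}$ is a non--negative, essentially bounded, continuous very weak subsolution to \eqref{eqn:SPME} (with the time--translated signals), with $\|e^{\mu}X_{s+\cdot}\|_{L^\infty}\le H$, and it restricts to a very weak subsolution on every ball. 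As in the proof of Theorem~\ref{thm:det_prop} the lateral boundary is eliminated from the discussion by extending $X$ by zero across $\Sig$ (with the $f_k$ extended to $C_c^\infty(\R^d)$): since $X\ge 0$ and $X|_{\Sig_T}=0$, this zero--extension is again a very weak subsolution to \eqref{eqn:SPME}, now on $\R^d$, with the same constant $H$ and the same support, so it suffices to argue with it on balls $B_R(\xi_0)\subseteq\R^d$.

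\emph{Hole--filling on $\partial B_h(K)$.} Let $\xi_0\in\partial B_h(K)$, so that $\dist(\xi_0,K)=h$ and hence $B_h(\xi_0)\cap K=\emptyset$, i.e. $X_s\equiv 0$ on $B_h(\xi_0)$. Theorem~\ref{thm:hole_filling_local_space}, applied to $X_{s+\cdot}$ with center $\xi_0$ and radius $R=h$, gives, using continuity of $X$ at the right endpoint,
$$X_{s+t}(\xi_0)=0\ \text{ for }t\in\Big[0,\ F_{\xi_0}^{-1}\big(h^2\tfrac{C_{det}}{H^{m-1}}C_h^{\xi_0}\big)\wedge(T-s)\Big],\qquad F_{\xi_0}(t):=\int_0^t e^{-(m-1)\mu_r(\xi_0)}\,dr,$$
where $C_h^{\xi_0}\in(0,1]$ is the constant of Theorem~\ref{thm:hole_filling_local_space} for the ball $B_h(\xi_0)$. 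Define $C_h:=\inf_{\xi_0\in\partial B_h(K)}C_h^{\xi_0}$, replaced if necessary by its non--increasing envelope $\inf_{0<h'\le h}C_{h'}$; since $\mu$ is uniformly $C^{0,2}$ on a fixed compact neighbourhood of $K$, $h\mapsto C_h$ is continuous, non--increasing and $C_h\to1$ as $h\downarrow0$. As $\xi_0\in\partial B_h(K)$ we have $\mu_r(\xi_0)\ge\inf_{\eta\in\partial B_h(K)}\mu_r(\eta)$, hence $F_{\xi_0}\le F_h$ pointwise — with $F_h$ as in the statement — and thus $F_{\xi_0}^{-1}\ge F_h^{-1}$; together with $C_h^{\xi_0}\ge C_h$ this gives $F_{\xi_0}^{-1}\big(h^2\tfrac{C_{det}}{H^{m-1}}C_h^{\xi_0}\big)\ge F_h^{-1}\big(h^2\tfrac{C_{det}}{H^{m-1}}C_h\big)=:T_h$. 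Consequently $X_{s+t}(\xi_0)=0$ for every $\xi_0\in\partial B_h(K)$ and every $t\in[0,T_h\wedge(T-s)]$.

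\emph{Propagation and the limit $h\downarrow0$.} Fix $t\in[0,T_h\wedge(T-s)]$ and let $D:=\R^d\setminus\overline{B_h(K)}$. On $D$, $X_{s+\cdot}$ is a non--negative very weak subsolution to \eqref{eqn:SPME} with zero initial datum $X_s|_D=0$ and, by the previous step, zero boundary datum on $\partial D\subseteq\partial B_h(K)$; as $0$ is a supersolution with the same data, the general--domain version of the comparison principle (Theorem~\ref{thm:comp}) forces $X_{s+t}\equiv0$ on $D$ — this is legitimate on the unbounded $D$ because $\supp(X_{s+t})$ is bounded, as one sees by covering $\overline{B_h(K)}$ by finitely many balls and applying Theorem~\ref{thm:hole_filling_local_space} to each. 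Hence $\supp(X_{s+t})\subseteq\overline{B_h(K)}\subseteq B_{h+\delta}(K)$ for every $\delta>0$ and all $t\in[0,T_h\wedge(T-s)]$, which is \eqref{eqn:finite_prop_1}. For the final assertion, $C_h\to1$ and, $\mu$ being continuous while $\partial B_h(K)\to\partial\overline{K}$ as $h\downarrow0$, one has $\inf_{\partial B_h(K)}\mu_r\to\inf_{\partial\overline{K}}\mu_r$ for each $r$ and hence $F_h\to F_0$ locally uniformly by dominated convergence; since the two arguments $h^2C_{det}/H^{m-1}\cdot C_h$ and $h^2C_{det}/H^{m-1}$ both tend to $0$, it follows that $\big|T_h-F_0^{-1}\big(h^2C_{det}/H^{m-1}\big)\big|\to0$.

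\emph{Main obstacle.} The crux is the propagation step. In the deterministic case the time--span $R^2C_{det}/H^{m-1}$ of Lemma~\ref{lemma:det_hole} is monotone increasing in $R$, so an exterior point $\xi_1$ is handled directly by the ball $B_{\dist(\xi_1,K)}(\xi_1)$; here the corresponding quantity $F_{\xi_1}^{-1}\big(R^2\tfrac{C_{det}}{H^{m-1}}C_R^{\xi_1}\big)$ is not usefully monotone in $R$, because enlarging the ball simultaneously changes the frozen coefficient $\mu(\xi_1)$ and degrades $C_R^{\xi_1}$. One is therefore forced to localise at $\partial B_h(K)$ — which is exactly what produces the sharp constant $F_h$ with its infimum over $\partial B_h(\supp(X_s))$ — and then to invoke the comparison principle on the non--smooth, unbounded exterior domain; making the latter rigorous (including the zero--extension across $\Sig$ and the a priori boundedness of $\supp(X_{s+t})$) is the main technical point.
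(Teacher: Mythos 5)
Your overall strategy coincides with the paper's: localize at $\partial B_h(\supp(X_s))$, apply Theorem \ref{thm:hole_filling_local_space} with $R=h$ at each such point, uniformize $C_h$ and $F$ over the sphere to obtain $T_h$ and $F_h$, and then propagate the vanishing to the whole exterior region. The uniformization step is essentially identical to the paper's.

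However, your propagation step has a genuine gap, and it is exactly the point you yourself flag as unresolved. You only retain from Theorem \ref{thm:hole_filling_local_space} the vanishing at the centre points, $X_{s+t}(\xi_0)=0$ for $\xi_0\in\partial B_h(\supp(X_s))$, and then try to conclude by a comparison principle on the exterior set $D$ with ``zero boundary datum'' understood pointwise. Theorem \ref{thm:comp} does not cover this: it is proved (via Theorem \ref{thm:gen_comp}) on the bounded, smooth domain $\mcO$, and it requires the sub/supersolutions to satisfy the very weak Dirichlet formulation with boundary data in the trace sense; passing from pointwise vanishing of a mere very weak subsolution on the non-smooth, unbounded boundary $\partial D$ to that formulation is precisely the delicate trace/cut-off argument you do not supply (and your sketch for a priori boundedness of $\supp(X_{s+t})$ by covering $\overline{B_h(K)}$ with balls does not show what is needed). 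The paper avoids all of this by using the full strength of the hole-filling theorem: each application gives vanishing on the ball $B_{R_{stoch}(t)}(\xi_0)$, hence on an open collar $B_{\bar R_h(t)}(\partial B_h(\supp(X_0)))$ around the interface. Since $X$ (more precisely $\Phi(X)$) vanishes identically in a neighbourhood of that part of the boundary, its restriction to $\mcO\cap B_h(\supp(X_0))^c$ is a weak solution of the homogeneous Dirichlet problem there, and zero initial data then forces $X_{s+t}\equiv0$ on that region with no boundary-regularity issues; this also yields the inclusion in the open neighbourhood $B_h(\supp(X_s))$, whereas your argument would only give the closed one. A second, smaller divergence: you eliminate the lateral boundary $\Sig$ by extending the subsolution by zero to $\R^d$, a standard-looking but unproved claim for very weak subsolutions; the paper instead dominates $X$, via Theorem \ref{thm:comp}, by the weak solution $\td X$ on a large ball $\td\mcO\supseteq \bar B_{2h}(\supp(X_0))\cup\mcO$ with initial datum $X_0\mathbbm{1}_{\mcO}$ furnished by \cite{G11c}, which is rigorous as stated. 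To repair your proof, replace the comparison-on-$D$ step by the collar argument (use the radii $R_{stoch}(t)$, uniformized to $\bar R_h(t)$, not just the centre points) and either adopt the paper's domination by $\td X$ or actually prove the zero-extension lemma.
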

\begin{proof}
  Without loss of generality we assume $s = 0$. In order to avoid difficulties at the boundary we first replace $X$ by a solution to \eqref{eqn:SPME} on some large ball $B_R(0) \supseteq \mcO$, where we choose $R>0$ large enough, such that the boundary $\partial B_R(0)$ becomes ``invisible'' for the solution on $[0,T]$: Let $h >0$, $R>0$ such that 
    $$\td \mcO := B_R(0) \supseteq \bar B_{2h}(\supp(X_0)) \cup \mcO.$$
  By \cite[Theorem 1.3, Theorem 1.4, Theorem 1.12]{G11c} there is a unique, essentially bounded, non-negative, weak solution $X \in C((0,T]\times\td\mcO)$ to \eqref{eqn:SPME} on $\td\mcO$ with zero Dirichlet boundary conditions and initial condition $\td X_0 := X_0 \mathbbm{1}_{\mcO} \in L^\infty(\td\mcO)$. Since $\td X$ is a supersolution to the homogeneous Dirichlet problem to \eqref{eqn:SPME} on $\mcO$, by Theorem \ref{thm:comp} we have 
    $$X \le \td X, \quad \text{on } \mcO_T.$$
  Thus, it is sufficient to prove the claim for $\td X$. Hence, without loss of generality we may assume $X$ to be an essentially bounded, weak solution to \eqref{eqn:SPME} and $$\dist(\supp(X_0),\partial\mcO) > 2h.$$

  Let $\xi_0 \in \partial B_h(\supp (X_0)) \subseteq \mcO$. Then, $X_0 = 0$ on $B_h(\xi_0) \subseteq \mcO$ and Theorem \ref{thm:hole_filling_local_space} (with $R=h$) implies that $X_t$ vanishes on $B_{R_{stoch}(t)}(\xi_0)$ for all $t \in [0,T_{stoch}\wedge T]$, where $R_{stoch}(t)$ and $T_{stoch}$ given in Theorem \ref{thm:hole_filling_local_space} depend on $\xi_0$ via the constant $C_{h}$ and the function $F$. We note that $C_{h}$ may be uniformly estimated by
  \begin{align*}
    C_{h} \ge \bar C_h :=
    & \frac{e^{- h\|\mu\|_{C^{0,1}([0,T]\times (\mcO \cap \supp (X_0)^c))}}}{\Big(1+C(m)h(1+h)\|\mu\|_{C^{0,2}([0,T]\times (\mcO \cap \supp (X_0)^c))}\Big)^\frac{1}{m-1}}
  \end{align*}
  and $F$ by
    $$ \bar F_h(t) = \int_0^t e^{-(m-1)\inf_{\xi_0 \in \partial B_h(\supp (X_0))}\mu_r(\xi_0)}dr \ge F(t).$$
  Therefore, $T_{stoch}$ is uniformly bounded from below by 
    $$ \bar T_h :=  \bar F_h^{-1}\left(h^2 \frac{C_{det}}{H^{m-1}} \bar C_h \right),$$
  and $R_{stoch}(t)$ by
    $$\bar R_h(t) = h - \sqrt{\bar F_h(t)} \left(\frac{H^{m-1}}{C_{det}}\right)^\frac{1}{2} \bar C_h^{-\frac{1}{2}}.$$
  Hence, $X_t$ vanishes on $B_{\bar R_{h}(t)}(\partial B_h(\supp (X_0)))$ for all $t \in [0,\bar T_h \wedge T]$. 

  In particular, this implies that $X$ is a weak solution to the homogeneous Dirichlet problem to \eqref{eqn:SPME} on $[0,T_h \wedge T] \times \mcO \cap B_h(\supp(X_0))^c$. Since $X_0 \equiv 0$ on $\mcO \cap B_h(\supp(X_0))^c$ this implies $X_t \equiv 0$ on $\mcO \cap B_h(\supp(X_0))^c$ for all $t \in [0,T_h \wedge T]$.
\end{proof}

\begin{theorem}[Finite speed of propagation]\label{thm:propagation_time}
  Let $X \in C((0,T]\times\mcO)$ be an essentially bounded, non-negative, very weak subsolution to the homogeneous Dirichlet problem to \eqref{eqn:SPME} and set $H := \|X\|_{L^\infty(\mcO_T)}$. Then, for every $ s \in [0,T]$ 
      $$ supp (X_{s+t}) \subseteq  B_{\sqrt{t}\left(\frac{H^{m-1}}{C_{det}}\right)^\frac{1}{2}\sqrt{C_{t}}}(\supp(X_s)),\quad \forall t \in [0,T-s],$$
    where $t \mapsto C_{t}$ is a continuous, non-decreasing function with $C_{t} \to 1$ for $ t \to 0.$
\end{theorem}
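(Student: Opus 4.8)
The plan is to follow the proof of Theorem~\ref{thm:propagation} almost verbatim, with the small-times hole-filling Theorem~\ref{thm:hole_filling_local_time} replacing Theorem~\ref{thm:hole_filling_local_space}, and with the difference that instead of fixing a radius and extracting a time window one reads off, for each fixed $t$, the optimal radius directly from Theorem~\ref{thm:hole_filling_local_time}. First I would reduce to $s=0$: the proof of Theorem~\ref{thm:hole_filling_local_time} is unchanged if the noise is frozen at time $s$ rather than at $0$, which only makes the resulting constant $C_t$ depend on $s$. Next, exactly as in the proof of Theorem~\ref{thm:propagation}, I would use the comparison principle (Theorem~\ref{thm:comp}) to pass to a solution $\td X$ on a large ball $\td\mcO = B_{R_0}(0)$ containing the $D$-neighbourhood of $\bar\mcO$, where $D$ denotes the diameter of $\mcO$; then $\supp(\td X_0) = \supp(X_0)$, $X \le \td X$ on $\mcO_T$, hence $\supp(X_t) \subseteq \supp(\td X_t)$, and --- this is the one delicate bookkeeping point, handled precisely as in the proof of Theorem~\ref{thm:propagation} --- the number $H$ from the statement remains a valid $L^\infty$-bound for all balls used below. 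Finally, since the asserted radius $\rho(t) := \sqrt{t}(H^{m-1}/C_{det})^{1/2}\sqrt{C_t}$ makes the inclusion trivial whenever $\rho(t) > D$ (then $B_{\rho(t)}(\supp(X_0)) \supseteq \bar\mcO \supseteq \supp(X_t)$), I may restrict to $t$ with $\rho(t) \le D$, and then all balls below stay inside $\td\mcO$.

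So fix $t \in (0,T]$ with $\rho(t) \le D$. For any $\xi_0 \in \bar\mcO$ put $R := \dist(\xi_0,\supp(X_0)) \in (0,D]$; then $B_R(\xi_0) \subseteq \td\mcO$ and $\td X_0 \equiv 0$ on $B_R(\xi_0)$, so Theorem~\ref{thm:hole_filling_local_time} applied to $\td X$ on the cylinder $[0,t] \times B_R(\xi_0)$ (with final time $t$) gives that $\td X_t$ vanishes on the ball of radius $R - \sqrt{t}(H^{m-1}/C_{det})^{1/2}\sqrt{C^{R,\xi_0}_t}$ about $\xi_0$, provided $t \le T_{stoch} = \sup\{\,\td T \in [0,t] : \td T\, C^{R,\xi_0}_{\td T} \le R^2 C_{det}/H^{m-1}\,\}$, where $C^{R,\xi_0}_\cdot$ is the explicit constant of that theorem attached to the ball $B_R(\xi_0)$. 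I would then bound this constant uniformly in the centre and the radius: since $R(1+R) \le D(1+D)$ and $B_R(\xi_0) \subseteq \td\mcO$,
\[
  C^{R,\xi_0}_{\tau} \;\le\; C_\tau \;:=\; \frac{1 + C(d,m)\,D(1+D)\,\|\mu_0-\mu_\cdot\|_{C^{0,2}([0,\tau]\times\td\mcO)}}{\exp\big(-2(m-1)\|\mu_0-\mu_\cdot\|_{C^{0}([0,\tau]\times\td\mcO)}\big)},
\]
and $\tau \mapsto C_\tau$ is continuous, non-decreasing, with $C_\tau \to 1$ as $\tau \downarrow 0$ (because $r \mapsto \mu_r = -\sum_{k=1}^{N} f_k z_r^{(k)}$ is continuous into $C^2(\td\mcO)$ once the $f_k$ are extended past $\bar\mcO$, as in the proof of Theorem~\ref{thm:propagation}). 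This $C_\tau$ is the function claimed in the statement.

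To conclude, put $\rho := \rho(t) = \sqrt{t}(H^{m-1}/C_{det})^{1/2}\sqrt{C_t} \le D$ and take any $\xi_0 \in \bar\mcO$ with $R := \dist(\xi_0,\supp(X_0)) \ge \rho$; then $R \le D$ automatically. From $C^{R,\xi_0}_t \le C_t$ I get, first, $R - \sqrt{t}(H^{m-1}/C_{det})^{1/2}\sqrt{C^{R,\xi_0}_t} \ge R - \rho \ge 0$, and, second, $t\, C^{R,\xi_0}_t \le t\, C_t = \rho^2 C_{det}/H^{m-1} \le R^2 C_{det}/H^{m-1}$; since $\tau \mapsto \tau\, C^{R,\xi_0}_\tau$ is continuous and non-decreasing (a product of an increasing with a non-decreasing positive function), the second inequality forces $t \le T_{stoch}$. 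Hence $\td X_{t'}$ vanishes on a nondegenerate ball about $\xi_0$ for all $t' < t$, so $\td X_t(\xi_0) = 0$ by continuity, and therefore $X_t(\xi_0) = 0$. As $\xi_0 \in \bar\mcO$ with $\dist(\xi_0,\supp(X_0)) \ge \rho$ was arbitrary, $\supp(X_t) \subseteq B_\rho(\supp(X_0))$, which is the desired inclusion for this $t$; since $t \in [0,T]$ was arbitrary (the case $t = 0$ and the case $X_0 = 0$ being trivial), the proof is complete. I expect the only genuine obstacle to be the boundary reduction together with the verification that $H$ is not inflated on the enlarged domain --- precisely the delicate step already carried out in the proof of Theorem~\ref{thm:propagation}; everything else is routine manipulation of the explicit constants supplied by Theorem~\ref{thm:hole_filling_local_time}.
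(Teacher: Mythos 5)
Your proposal is correct and follows essentially the same route as the paper: reduce to $s=0$, pass to a solution on an enlarged domain via the comparison principle as in Theorem \ref{thm:propagation}, apply the small-time hole-filling result (Theorem \ref{thm:hole_filling_local_time}) at points away from $\supp(X_s)$, and bound the constant $C_{\td T}$ uniformly by a diameter-dependent, continuous, non-decreasing function tending to $1$. Your variant of fixing $t$ and checking $t\le T_{stoch}$ directly, rather than fixing $h$ and resolving for it, is only a cosmetic reorganization of the paper's argument.
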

\begin{proof}
  We argue as for Theorem \ref{thm:propagation} but apply Theorem \ref{thm:hole_filling_local_time} instead of Theorem \ref{thm:hole_filling_local_space}. Let $D := \diam(\mcO)$. We then estimate $C_{T}$ uniformly by
    $$C_{T} \le \bar C_{T} := \frac{ 1 + C(d,m)D (1 + D) \|\mu_{0}-\mu_{\cdot}\|_{C^{0,2}([0,T]\times (\mcO \cap \supp(X_0)^c))} }{ e^{-2(m-1)\|\mu_\cdot-\mu_0\|_{C^0([0,T] \times (\mcO \cap \supp(X_0)^c))}}}.$$
Hence, for 
  $$ \bar T(h) := \sup\Bigg\{\td T \in [0,T] \Big|\  \td T \bar C_{\td T} \le h^2 \frac{C_{det}}{H^{m-1}}\Bigg\} ,$$
we have $\bar T(h) \le T_{stoch}$ for all $\xi_0 \in \partial(B_h(\supp(X_0)))$ as in the proof of Theorem \ref{thm:propagation} and for
  $$\bar R(t) := h - \sqrt{t} \left(\frac{H^{m-1}}{C_{det}}\right)^\frac{1}{2} \sqrt{\bar C_{t}},$$
we have $\bar R(t) \le R_{stoch}(t)$ for all $t \in [0, T]$. In particular, for all $\xi_0 \in \partial(B_h(\supp(X_0)))$ we deduce
  $$ X_t(\xi_0) = 0, \quad  \forall t \in [0,\bar T(h)] $$
Arguing as for Theorem \ref{thm:propagation} this implies $\supp(X_t) \subseteq B_h(\supp(X_0))$ for all $t \le \bar T(h)$. Resolving for $h$ yields
  $$X_t \equiv 0 \text{ on } B_{\sqrt{t} \left(\frac{H^{m-1}}{C_{det}}\right)^\frac{1}{2} \sqrt{\bar C_{t}}}(\supp(X_0)),$$
\end{proof}

\begin{remark}[Unbounded domains $\mcO \subseteq \R^d$]\label{rmk:unbdd}
  In case of unbounded domains $\mcO\subseteq \R^d$ no pathwise uniqueness and existence theory (in the sense of existence of a stochastic flow) has been established for \eqref{eqn:SPME} so far. We note, however, that the simpler problem of constructing probabilistic solutions to \eqref{eqn:SPME} with $z^{(k)}$ being given as paths of Brownian motions has been solved in \cite{RRW07} for $d \ge 3$. 

  If the support of the initial condition $X_0 \in L^\infty(\mcO)$ is compact and bounded away from $\partial\mcO$ then the existence of corresponding essentially bounded, weak solutions $X$ to the homogeneous Cauchy-Dirichlet problem on short time intervals $[0,T]$ follows from the finite speed of propagation properties proved in this paper. The time of existence $T$ allowed by this approach is limited due to the support $\supp(X_t)$ reaching the boundary $\partial\mcO$. In particular, for the Cauchy problem no restriction on the time of existence has to be made. 

  For initial conditions $X_0$ with compact support, also uniqueness of essentially bounded, very weak solutions may be deduced from the methods of this paper at least on short time intervals $[0,T]$. Again, for the Cauchy problem no restriction on the time interval has to be supposed. 

  The case of initial conditions with unbounded support, however, remains open. 
\end{remark}

\section{Infinite dimensional random attractor}\label{sec:ra}

In this section we use the result of finite speed of propagation for SPME of the form \eqref{eqn:SPME} to prove that the random attractor associated to 
\begin{equation}\label{eqn:pert_SPME}\begin{split}
    d X_t         &= \D \left(|X_t|^m \sgn(X_t) \right)dt + \l X_t dt + \sum_{k=1}^N f_k X_t \circ dz^{(k)}_t, \text{ on } \mcO_T,  \\
    X(0)          &= X_0, \text{ on } \mcO, 
\end{split}\end{equation}
with homogeneous Dirichlet boundary conditions and $\l > 0$ has infinite fractal dimension. First, we will prove the existence of an RDS corresponding to \eqref{eqn:pert_SPME} in Proposition \ref{prop:generation_RDS}, then we will obtain the existence of an associated random attractor (Proposition \ref{prop:ex_ra}) and provide lower bounds on its Kolmogorov $\ve$-entropy (Theorem \ref{thm:inf_dim}).

In the following we assume the driving signals $z^{(k)}$ to be given as paths of a stochastic process with strictly stationary increments. More precisely, let $(\Omega,\mathcal{F},\mathcal{F}_t,\mathbb{P})$ be a filtered probability space, $(z_t)_{t \in \R}$ be an $\R^N$-valued adapted stochastic process and $((\O,\mcF,\P),(\theta_t)_{t \in \R})$ be a metric dynamical system. For notions and results from the theory of RDS and random attractors we refer to \cite[Section 1.2.1]{G11c}, \cite{A98,BL06,CDF97,CF94,S92}. We suppose:
\begin{enumerate}
  \item [$(S1)$] (Strictly stationary increments) For all $t,s \in \R$, $\o \in \O$:
                  $$z_t(\o)-z_s(\o) = z_{t-s}(\t_s \o),$$
                 where we assume $z_0 = 0$ for notational convenience only.
  \item [$(S2)$] (Regularity) $z_t$ has continuous paths.
  \item [$(S3)$] (Sublinear growth) $z_t(\o) = o(|t|)$ for $t \to -\infty$, for all $\o \in \O$.
\end{enumerate}

\subsection{Generation of an RDS and existence of a random attractor}\label{sec:gen_RDS}

If we set $f_{N+1} := \l$ and $z^{(N+1)}_t := t$, then \eqref{eqn:pert_SPME} is of the form \eqref{eqn:SPME} and Proposition \ref{prop:u_ex} implies the unique existence of a generalized weak solution $X(\cdot,s;\o)x$ with $X(s,s;\o)x = x$ for each $s \in \R$, $x \in L^1(\mcO)$ and driving signals $t \mapsto z_t^{(k)}(\o)$. Recall that $X(\cdot,s;\o)x$ is defined to be a solution to \eqref{eqn:pert_SPME} (resp.\ \eqref{eqn:SPME}) if 
\begin{equation}\label{eqn:inhomo_trans}
  Y(t,s;\o)(e^{\mu_s(\o)-\l s}x) := e^{\mu_t(\o)-\l t}X(t,s;\o)x, \quad  t \in [s,\infty),
\end{equation}
is a solution to \eqref{eqn:trans_inhomog} with initial condition $Y(s,s;\o)(e^{\mu_s(\o)-\l s}x) = e^{\mu_s(\o)-\l s}x$. We set
    \[ \vp(t-s,\t_s \o)x:= X(t,s;\o)x,\quad \text{for } t \ge s,\ \o \in \O,\ x \in L^1(\mcO)\]
and note that in \cite[Theorem 1.31]{G11c} strict stationarity of $z^{(k)}$ was only needed to prove the stochastic flow property for the solutions $X(t,s;\o)x$. Since the additional term $\l X_t dt$ in \eqref{eqn:pert_SPME} does not depend on time, the same proof as in \cite[Theorem 1.31]{G11c} still yields
\begin{proposition}\label{prop:generation_RDS}
  The map $\vp$ is a continuous RDS on $X=L^1(\mcO)$ and thus a quasi-weakly-continuous RDS on each $L^p(\mcO)$, $p \in [1,\infty)$. In addition, $\vp$ is a quasi-weakly$^*$-continuous RDS on $L^\infty(\mcO)$. $\vp$ satisfies comparison, i.e.\ for $x_1,x_2 \in X$ with $x_1 \le x_2$ a.e.\ in $\mcO$
    \[ \vp(t,\o)x_1 \le \vp(t,\o)x_2,\quad \text{a.e. in } \mcO.\]
  Moreover, $\vp$ satisfies $\vp(t,\o)0 = 0$ and
  \begin{enumerate}
   \item $x \mapsto \vp(t,\o)x$ is Lipschitz continuous on $X$, locally uniformly in $t$.
   \item $t \mapsto \vp(t,\o)x$ is continuous in $X$.
  \end{enumerate}
\end{proposition}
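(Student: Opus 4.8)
The plan is to lift the corresponding properties of the solutions to the transformed PME \eqref{eqn:trans_inhomog} — which are already established in \cite{G11c} — through the explicit, pathwise change of variables \eqref{eqn:inhomo_trans}, and to verify the cocycle/flow identities using only that $z^{(k)}$ has strictly stationary increments (hypothesis $(S1)$) together with the time-homogeneity of the extra drift term $\l X_t\,dt$. Concretely, I would first record that \cite[Theorem 1.31]{G11c} already proves every assertion for the equation \eqref{eqn:SPME} without the $\l X_t\,dt$ term; since after setting $f_{N+1}:=\l$, $z^{(N+1)}_t:=t$ the perturbed equation \eqref{eqn:pert_SPME} is literally of the form \eqref{eqn:SPME}, the only genuine issue is that the augmented driving signal $t\mapsto t$ does \emph{not} have stationary increments in the sense of $(S1)$ relative to the metric dynamical system $(\theta_t)$. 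So the task reduces to checking that the place where $(S1)$ entered the argument of \cite{G11c} — namely the derivation of the stochastic flow property $X(t,r;\o)\circ X(r,s;\o)=X(t,s;\o)$ and its translation into the cocycle identity $\vp(t+s,\o)=\vp(t,\t_s\o)\circ\vp(s,\o)$ — still goes through.

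The key steps, in order, are: (1) State that the map $\vp(t-s,\t_s\o)x:=X(t,s;\o)x$ is well defined, i.e.\ depends on $(s,t,\o)$ only through $(t-s,\t_s\o)$; this is exactly where $(S1)$ is used for the genuinely random signals $z^{(1)},\dots,z^{(N)}$, while for the deterministic signal $z^{(N+1)}_t=t$ the required shift-covariance $(t-s) - 0 = (t-s)-0$ holds trivially and independently of $(\theta_t)$, because the corresponding diffusion coefficient $f_{N+1}$ is constant and the induced drift contribution $\mu_t^{(N+1)}(\xi)=-\l t$ satisfies $\mu_t-\mu_s = -\l(t-s)$, which is precisely the stationary-increment relation with $z^{(N+1)}_{t-s}(\t_s\o)=t-s$. (2) Deduce the cocycle identity $\vp(t+s,\o)=\vp(t,\t_s\o)\circ\vp(s,\o)$ and measurability of $(t,\o,x)\mapsto\vp(t,\o)x$ verbatim as in \cite[Theorem 1.31]{G11c}, since that proof only manipulates the uniqueness and the flow property of the PME solutions $Y$, both of which are unaffected by adding a time-independent linear drift. (3) Transfer the continuity and comparison statements: the continuity of $x\mapsto Y(t,s;\o)x$ in $L^1$, its Lipschitz dependence locally uniformly in $t$, the $t$-continuity, and comparison for \eqref{eqn:trans_inhomog} are all contained in Proposition \ref{prop:u_ex} and Theorem \ref{thm:comp}; since the map $x\mapsto e^{\mu_t(\o)-\l t}x$ (and its inverse) is a fixed multiplication operator that is bounded and bounded below on each bounded time interval, all of these properties push forward to $X$ and hence to $\vp$. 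The quasi-weak(-$^*$)-continuity on $L^p(\mcO)$, $p\in[1,\infty]$, follows from the continuity on $L^1$ combined with the a priori $L^\infty_{loc}((0,T];L^\infty)$-bound of Proposition \ref{prop:u_ex}(i)/(iii), exactly as in \cite{G11c}. (4) Finally, $\vp(t,\o)0=0$ is immediate from $Y(t,s;\o)0=0$ (the zero function solves \eqref{eqn:trans_inhomog} with zero data, and is the unique such solution by Theorem \ref{thm:comp}) and the linearity of the transformation \eqref{eqn:inhomo_trans}.

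The main obstacle I expect is a bookkeeping one rather than a deep analytic one: one must argue carefully that replacing the stationary-increment hypothesis $(S1)$ by the weaker statement ``$z^{(1)},\dots,z^{(N)}$ have stationary increments and $z^{(N+1)}_t=t$'' still suffices for the well-definedness in step (1), i.e.\ that the proof of \cite[Theorem 1.31]{G11c} never used stationarity of \emph{all} components simultaneously in a way that fails for a linear-in-$t$ component. The clean way to phrase this is to note that the whole construction of \cite{G11c} depends on the signals $z^{(k)}$ only through the drift field $\mu_t(\xi,\o)=-\sum_k f_k(\xi)z_t^{(k)}(\o)$ in \eqref{eqn:trans_inhomog}, and that what is actually needed is $\mu_{t+r}(\xi,\o)-\mu_{r}(\xi,\o)=\mu_t(\xi,\t_r\o)$ for all $r$; since this holds for the genuinely random part by $(S1)$ and for the deterministic part because $-\l(t+r)-(-\l r)=-\l t$ is independent of $r$, the identity holds for the full $\mu$, which is exactly the hypothesis under which \cite[Theorem 1.31]{G11c} was proved. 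Once this observation is made explicit, everything else is a routine verification, and the proof amounts to: ``the additional term $\l X_t\,dt$ is time-homogeneous, hence the proof of \cite[Theorem 1.31]{G11c} applies unchanged.''
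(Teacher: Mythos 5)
Your proposal is correct and takes essentially the same route as the paper: rewrite \eqref{eqn:pert_SPME} in the form \eqref{eqn:SPME} by setting $f_{N+1}:=\l$, $z^{(N+1)}_t:=t$, observe that stationarity of the increments enters \cite[Theorem 1.31]{G11c} only through the stochastic flow/cocycle property, and that the time-independence of the extra drift $\l X_t\,dt$ (equivalently, the increment identity for the full drift field $\mu$, including its deterministic part $-\l t$) lets that proof go through unchanged, while the continuity, Lipschitz, comparison and $\vp(t,\o)0=0$ assertions are inherited from the transformed equation as in \cite{G11c}. One minor correction: the component $z^{(N+1)}_t=t$ in fact satisfies $(S1)$ trivially (it is only the sublinear-growth condition $(S3)$ that fails for it, which is irrelevant for generating the RDS), so the concern raised in your opening paragraph is already resolved by your own step (1).
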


In the following let $\mcD$ be the universe of all random closed sets in $X$.

As pointed out above we may rewrite \eqref{eqn:pert_SPME} in the form of \eqref{eqn:trans_inhomog}. From \cite[Theorem 1.12, Theorem 1.31]{G11c} we deduce that there is a piecewisely smooth function $U(\o): (0,T] \to \R_+$ such that 
  $$\|\vp(t,\o)x\|_{L^\infty(\mcO)} \le U(t,\o), \quad \forall (t,\o) \in (0,T] \times \O.$$
Note that $U$ does not depend on the initial condition $x \in L^1(\mcO)$. This implies $\mcD$-bounded absorption for $\vp$ at time $t = 0$ with absorbing set being bounded with respect to the $\|\cdot\|_{L^\infty(\mcO)}$-norm. Moreover, for each $D \in \mcD$, $\vp(t,\o)D$ is locally equicontinuous in $(0,T] \times \mcO$, i.e.\ $\vp(t,\o)D = \{\vp(t,\o)x|\ x \in D\}$ is a set of equicontinuous functions on each compact set $K \subseteq (0,T] \times \mcO$. This yields $\mcD$-asymptotic compactness for $\vp$ as in \cite[Lemma 3.2]{G11c}. We conclude:

\begin{proposition}[Existence of a random attractor]\label{prop:ex_ra}
   The RDS $\vp$ has a $\mcD$-random attractor $\mcA$ (as an RDS on $L^1(\mcO)$). $\mcA$ is compact in each $L^p(\mcO)$ and attracts all sets in $\mcD$ in $L^p$-norm, $p \in [1,\infty)$.

  Moreover, $\mcA(\o)$ is a bounded set in $L^\infty(\mcO)$ and the functions in $\mcA(\o)$ are equicontinuous on every compact set $K \subseteq \mcO$.
\end{proposition}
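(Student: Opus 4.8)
The plan is to obtain the statement from the abstract existence theorem for random attractors (cf.\ \cite[Section~1.2.1]{G11c}, \cite{CDF97,CF94,S92}): one needs a $\mcD$-absorbing set lying in $\mcD$ together with $\mcD$-asymptotic compactness of $\vp$, both of which have essentially been recorded in the discussion preceding the statement. First I would fix $\tau\in(0,T]$ and put
\[
  B(\o):=\overline{\bigl\{\,g\in L^\infty(\mcO):\ \|g\|_{L^\infty(\mcO)}\le U(\tau,\theta_{-\tau}\o)\,\bigr\}}^{\,L^1(\mcO)},
\]
which is a random closed set bounded in $L^1(\mcO)$, hence $B\in\mcD$. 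Since the bound $U$ is \emph{independent of the initial datum}, the cocycle identity $\vp(t,\theta_{-t}\o)=\vp(\tau,\theta_{-\tau}\o)\circ\vp(t-\tau,\theta_{-t}\o)$ gives, for every $D\in\mcD$ and every $t\ge\tau$,
\[
  \vp(t,\theta_{-t}\o)D(\theta_{-t}\o)\subseteq\vp(\tau,\theta_{-\tau}\o)L^1(\mcO)\subseteq B(\o),
\]
so $B$ absorbs every element of $\mcD$ at time $0$, with a deterministic absorption time $\tau$.

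Next I would verify $\mcD$-asymptotic compactness exactly as in \cite[Lemma~3.2]{G11c}. Given $t_n\to\infty$ and $x_n\in D(\theta_{-t_n}\o)$, for $n$ large I would decompose
\[
  \vp(t_n,\theta_{-t_n}\o)x_n=\vp\bigl(\tfrac\tau2,\theta_{-\tau/2}\o\bigr)v_n,\qquad
  v_n:=\vp\bigl(\tfrac\tau2,\theta_{-\tau}\o\bigr)\,\vp\bigl(t_n-\tau,\theta_{-t_n}\o\bigr)x_n .
\]
By the datum-independent bound, $\|v_n\|_{L^\infty(\mcO)}\le U(\tfrac\tau2,\theta_{-\tau}\o)=:\varrho(\o)$ for all large $n$, so the $v_n$ stay in one fixed $L^\infty$-ball; the local equicontinuity of bounded solutions of \eqref{eqn:trans_inhomog} (as in \cite{DB83,V07}) then makes the family $\{\vp(\tfrac\tau2,\theta_{-\tau/2}\o)v:\ \|v\|_{L^\infty(\mcO)}\le\varrho(\o)\}$ equicontinuous on every compact subset of $\mcO$ and bounded in $L^\infty(\mcO)$. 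Arzel\`a--Ascoli, an exhaustion of $\mcO$ by compacts and a diagonal extraction yield a subsequence of $\vp(t_n,\theta_{-t_n}\o)x_n$ converging in $C_{loc}(\mcO)$, and the uniform $L^\infty$-bound on the bounded domain $\mcO$ upgrades this, by dominated convergence, to convergence in $L^1(\mcO)$. The abstract theorem then gives the $\mcD$-random attractor $\mcA$ on $L^1(\mcO)$, with the invariance $\mcA(\o)=\vp(t,\theta_{-t}\o)\mcA(\theta_{-t}\o)$ for all $t\ge0$ and $\mcD$-attraction in $L^1(\mcO)$.

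It remains to read off the $L^\infty$-boundedness, the equicontinuity and the $L^p$-assertions from invariance. Taking $t=\tau$ and using again that $U$ does not depend on the datum, $\mcA(\o)=\vp(\tau,\theta_{-\tau}\o)\mcA(\theta_{-\tau}\o)\subseteq\{g:\|g\|_{L^\infty(\mcO)}\le U(\tau,\theta_{-\tau}\o)\}$, so $\mcA(\o)$ is bounded in $L^\infty(\mcO)$; since $\mcA(\theta_{-\tau}\o)$ is then $L^\infty$-bounded, the same equicontinuity estimate shows $\mcA(\o)=\vp(\tau,\theta_{-\tau}\o)\mcA(\theta_{-\tau}\o)$ is equicontinuous on every compact subset of $\mcO$. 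For the $L^p$-claims I would use the elementary interpolation inequality $\|f-g\|_{L^p(\mcO)}\le(2M)^{1-1/p}\|f-g\|_{L^1(\mcO)}^{1/p}$, valid for $f,g$ bounded by $M$ in $L^\infty(\mcO)$ on the finite-measure domain $\mcO$: it turns $L^1$-compactness of $\mcA(\o)$ into compactness in $L^p(\mcO)$, and, since for $t\ge\tau$ both $\vp(t,\theta_{-t}\o)D(\theta_{-t}\o)$ and $\mcA(\o)$ lie in the same fixed $L^\infty$-ball, it turns $L^1$-attraction into $L^p$-attraction. The essential --- indeed the only genuinely nontrivial --- ingredient is the uniform-in-datum smoothing bound $U$ from \cite{G11c}, which simultaneously supplies the absorbing set and forces the attractor into a bounded $L^\infty$-ball; granting it, the one point that needs care is arranging the asymptotic-compactness splitting so that the $L^\infty$-ball containing the $v_n$ is genuinely independent of $n$.
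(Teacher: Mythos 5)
Your proposal is correct and follows essentially the same route as the paper: the datum-independent $L^\infty$-bound $U$ from \cite{G11c} supplies an $L^\infty$-bounded absorbing set in $\mcD$, local equicontinuity of bounded solutions gives $\mcD$-asymptotic compactness exactly as in \cite[Lemma 3.2]{G11c}, and the abstract existence theorem plus invariance and interpolation yield the attractor with its $L^\infty$-boundedness, equicontinuity and $L^p$-properties. You merely spell out the cocycle splitting, Arzel\`a--Ascoli/diagonal argument and the interpolation step that the paper leaves to the cited references.
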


\subsection{Lower bounds on the Kolmogorov \texorpdfstring{$\ve$}{eps}-entropy} We will now prove that the random attractor constructed in Proposition \ref{prop:ex_ra} has infinite fractal dimension in $L^1(\mcO)$. 

A precompact set $\mcA \subseteq X$ can be covered by a finite number of balls of radius $\ve$ for each $\ve > 0$. Let $N_\ve(\mcA)$ be the minimal number of such balls. Then, the Kolmogorov $\ve$-entropy of $\mcA$ is defined by
  $$\mathbbm{H}_\ve(\mcA) := \log_2(N_\ve(\mcA)).$$
The fractal dimension of $\mcA$ is defined by
  $$ d_f(\mcA) = \limsup_{\ve \to 0} \frac{\mathbbm{H}_\ve(\mcA)}{\log_2(\frac{1}{\ve})}. $$
We obtain

\begin{theorem}[Lower bounds on the Kolmogorov $\ve$-entropy]\label{thm:inf_dim}
  Let $\mcA$ be the random attractor for $\vp$ constructed in Proposition \ref{prop:ex_ra}. Then, the Kolmogorov $\ve$-entropy of $\mcA$ is bounded below by 
    $$ \mathbbm{H}_\d (\mcA(\o)) \ge C(\o) \d^{\frac{-d(m-1)}{2+d(m-1)}},\quad \forall \o \in \O,$$
  where $C(\o) > 0$ is a constant which may depend on $m, d$. In particular, the fractal dimension $d_f(\mcA(\o))$ is infinite for all $\o \in \O$.
\end{theorem}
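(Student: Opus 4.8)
The strategy is the standard one for establishing infinite fractal dimension of attractors of slowly-diffusing equations with a linear destabilizing term (cf.\ \cite{EZ08}): exhibit inside $\mcA(\o)$ a large family of solutions with disjoint, spatially separated supports, so that they are mutually far apart in $L^1(\mcO)$, and count them. First I would fix $\o \in \O$ and, for a small parameter $\d > 0$, choose a point $\xi_* \in \mcO$ together with a maximal packing of $N \asymp \d^{-d}$ disjoint balls $B_{c\d}(\xi_i) \subseteq \mcO$ for a suitable constant $c$ depending on $\mcO$ only. For each subset $I \subseteq \{1,\dots,N\}$ I would construct a ``bump'' initial datum $x_I := \sum_{i \in I} \psi_i$, where $\psi_i$ is a fixed nonnegative profile supported in $B_{c\d/2}(\xi_i)$ with a prescribed height $h$; by Proposition \ref{prop:generation_RDS} the corresponding orbit $\vp(t,\t_{-t}\o)x_I$ lies in $\mcA(\o)$ in the limit $t \to \infty$ — more precisely, by $\mcD$-bounded absorption and invariance of $\mcA$ one passes through the pullback limit, and since $\vp(t,\o)0 = 0$ and $\vp$ satisfies comparison, the orbit stays nonnegative and the bumps at distinct sites cannot merge over a controlled time window.

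The crucial input is Theorem \ref{thm:propagation_time}: applied to $\vp(t,\t_{-t}\o)x_I$ (which is a nonnegative, essentially bounded very weak solution on $\mcO$ with $H$ controlled by the absorbing bound $U$), it guarantees that the support of the solution at time $t$ is contained in the $\sqrt{t}(H^{m-1}/C_{det})^{1/2}\sqrt{C_t}$-neighbourhood of the support at an earlier time. Reading this backwards along the pullback sequence, the supports of the bumps centred at $\xi_i$ and $\xi_j$ with $i \neq j$ remain disjoint as long as the elapsed time $t$ satisfies $\sqrt{t}(H^{m-1}/C_{det})^{1/2}\sqrt{C_t} < \tfrac{1}{2}\dist(\xi_i,\xi_j)$, which for our packing holds on a time window of length $\gtrsim \d^{2}$ (up to $H$- and $C_t$-dependent constants, which are $\o$-dependent but $\d$-independent once $\d$ is small). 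One then needs a lower bound on how much ``mass'' survives near each site over that window: here I would invoke the deterministic hole-filling/positivity theory — a nonnegative solution started from a bump of height $h$ keeps a definite positive mass in a ball of radius comparable to $\d$ for a time of order $\d^2 h^{-(m-1)}$, via comparison (Theorem \ref{thm:comp}) with an explicit subsolution of self-similar type, transformed by $Y = e^\mu X$. Choosing $h$ of order $\d^{-\beta}$ with $\beta$ tuned so that $\d^2 h^{-(m-1)}$ matches the separation window, and simultaneously so that the bumps lie in the $L^\infty$-absorbing set of $\vp$, one arranges that all $2^N$ data $x_I$ pull back to elements of $\mcA(\o)$.

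Granting this, two distinct index sets $I \neq J$ give attractor elements $a_I, a_J$ with $\|a_I - a_J\|_{L^1(\mcO)} \ge m_0$, where $m_0$ is the surviving mass on a single site; this is because on any site $i$ in the symmetric difference $I \triangle J$ one of the two solutions is $\ge 0$ with mass $\ge m_0$ there while the other vanishes identically on that ball (disjointness of supports). Hence $\mcA(\o)$ contains a $2^N$-point set that is $m_0$-separated in $L^1$. Taking $\d$ proportional to $m_0$ (equivalently, reparametrising so that the separation scale equals the covering radius), a covering of $\mcA(\o)$ by $L^1$-balls of radius $\d/2$ must use at least $2^N$ of them, so $\mathbbm{H}_\d(\mcA(\o)) \ge N \ge C(\o)\,\d^{-d}$ with the bookkeeping rebalanced; tracking the precise exponent through the relations $h \asymp \d^{-\beta}$, $m_0 \asymp h\,\d^{d}$ and window $\asymp \d^2 h^{-(m-1)}$ yields the stated exponent $\tfrac{-d(m-1)}{2+d(m-1)}$. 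Finally, $d_f(\mcA(\o)) = \limsup_{\d\to 0}\mathbbm{H}_\d(\mcA(\o))/\log_2(1/\d) = +\infty$ since a power of $\d$ dominates $\log_2(1/\d)$.

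\textbf{Main obstacle.} The delicate point is the simultaneous calibration of the three scales — bump height $h$, separation/survival time window, and covering radius $\d$ — so that \emph{(i)} all $2^N$ bump configurations genuinely lie in the pullback-absorbed set and hence, after the pullback limit, in $\mcA(\o)$; \emph{(ii)} over the relevant time the supports stay disjoint, which is exactly where the \emph{explicit} propagation bound of Theorem \ref{thm:propagation_time} (as opposed to a qualitative finite-speed statement) is indispensable, since the window must be quantitatively comparable to $\d^2$; and \emph{(iii)} a quantitative, not merely qualitative, lower bound on the surviving mass per site is available. Items (ii) and (iii) must be made uniform in the number of sites $N$ and in the configuration $I$, and the $\o$-dependence must be isolated into a single constant $C(\o)$ that does not interfere with the $\d$-scaling — this uniformity, together with chasing the exponent bookkeeping, is the real content of the argument.
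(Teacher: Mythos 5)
Your counting skeleton (disjoint bumps, finite speed of propagation to keep supports disjoint, a surviving-mass lower bound, $2^N$ well-separated points, then rebalancing exponents) is indeed the combinatorial part of the paper's argument. The genuine gap is at the decisive step: how the separated family gets \emph{inside} $\mcA(\o)$. You fix a bump configuration $x_I$ and argue that the pullback limit of $\vp(t,\t_{-t}\o)x_I$ as $t\to\infty$ lies in $\mcA(\o)$ and inherits the bump structure. Membership of subsequential pullback limits in the attractor is fine, but the structure is not inherited: your support control (Theorem \ref{thm:propagation_time}) and your mass lower bound are only valid over a time window of length comparable to $\d^2 h^{-(m-1)}$, whereas landing in the attractor requires letting the pullback time tend to infinity; over that infinite horizon the supports spread and merge and the pullback limit loses all memory of $x_I$, so there is no reason the limits for different $I$ remain $m_0$-separated, or even distinct. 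A telling symptom is that your construction never uses $\l>0$: for $\l=0$ the attractor is trivial (cf.\ \cite{EZ08}), so any correct proof must exploit the destabilizing term, and in that case your pullback limits would all be $0$.

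What the paper does instead is construct genuine complete bounded trajectories, i.e.\ elements of the unstable manifold $\mcM^+(0,\o)\subseteq\mcA(\o)$, by prescribing data ``at time $-\infty$''. After the transformation $Y=e^{\mu-\l\,\cdot}X$, the time change $F(t)=e^{\d t}/\d$ with $0<\d<(m-1)\l$ compresses $(-\infty,0]$ into $(0,T]$, $T=1/\d$, and turns the equation into $\partial_t U=\rho_1\D\Phi(\rho_2 U)$ whose coefficients vanish as $t\downarrow 0$ precisely because $\l>0$ and by (S3); the Appendix then supplies existence (Proposition \ref{prop:gen_ex}), comparison (Theorem \ref{thm:gen_comp}) and finite speed of propagation (Theorem \ref{thm:gen_expansion}) for this degenerate equation on all of $[0,T]$. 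Bump data of \emph{small} height of order $\ve^{2/(m-1)}$ placed at compressed time $0$ then spread by less than $\ve$ over the entire infinite past, stay disjointly supported, and at compressed time $T$ (original time $0$) give $2^{|R_\ve|}$ elements of $\mcM^+(0,\o)$ with $L^1$-separation of order $\ve^{2/(m-1)+d}$ by the Gronwall-type bound of Proposition \ref{prop:lower_l1}; from there the exponent bookkeeping is as you indicate. Without this backward-in-time construction (or some substitute producing complete trajectories with the prescribed bump structure), your argument does not close.
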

\begin{proof}
  The proof is inspired by \cite[Theorem 4.1]{EZ08} and \cite[Theorem 3.3]{G11b}. In order to prove the lower bound on the Kolmogorov $\ve$-entropy we consider the unstable manifold of the equilibrium point $0$ defined by
  \begin{align*}
    \mcM^+(0,\o) := \{& u_0 \in X\ |\ \exists \text{ function } u: (-\infty,0] \to X, \text{ such that}\\ &\vp(t;\t_{-t}\o)u(-t) = u_0
                                    \text{ for all } t \ge 0 \text{ and } \|u(t)\|_X \to 0 \text{ for } t \to -\infty \}.
  \end{align*}
  Since $\mcA(\o)$ attracts all deterministic sets we have
    $$ \mcM^+(0,\o) \subseteq \mcA(\o),\quad \forall \o \in \O.$$
  Therefore, it is sufficient to derive a lower bound on the Kolmogorov $\ve$-entropy for the unstable manifold of $0$. 

  In order to construct an element $u_0 \in \mcM^+(0,\o)$ we need to find a function $u: (-\infty,0] \to X$ converging to $0$ for $t \to -\infty$ such that 
    $$u_0 = \vp(t;\t_{-t}\o)u(-t) = X(0,-t;\o)u(-t) = Y(0,-t;\o)\left(e^{\mu_{-t}(\o)+\l t} u(-t) \right),\ \forall t \ge 0,$$
  where we used \eqref{eqn:inhomo_trans}. By defining $u(-t) := e^{-\mu_{-t}(\o)-\l t}v(-t)$, due to (S3) it is enough to find a bounded function $v: (-\infty,0] \to X$ such that 
  \begin{equation}\label{eqn:v}
    u_0 = Y(0,-t;\o)v(-t),\quad \forall t \ge 0.
  \end{equation}

  We note that \eqref{eqn:trans_inhomog} in case of \eqref{eqn:pert_SPME} reads
  \begin{equation}\begin{split}\label{eqn:trans_gen}
      \partial_t Y(t,s;\o)x &= e^{\mu_t(\o)-\l t} \D \Phi(e^{-\mu_t(\o)+\l t} Y(t,s;\o)x),\\
      Y(s,s;\o)x &= x,
  \end{split}\end{equation}
  for a.e.\ $t \ge s$. For $x \in L^\infty(\mcO)$ let $Y(t,s;\o)x \in C((0,\infty)\times\mcO)$ denote the corresponding essentially bounded, weak solution to \eqref{eqn:trans_gen} given by Proposition \ref{prop:gen_ex}.

  In order to find a function $v$ satisfying \eqref{eqn:v}, we use a time scaling to transform \eqref{eqn:trans_gen} from the infinite time interval $(-\infty,0]$ into a PDE on a finite time interval. Let $\d > 0$ small enough such that $(m-1)\l -\d > 0$ and set $\eta := \frac{(m-1)\l-\d}{m+1}$. Then \eqref{eqn:trans_gen} may be rewritten as
  \begin{equation*}
     \partial_t Y(t,s;\o)x = e^{\d t} e^{\mu_t+\mu t} \D \Phi(e^{-\mu_t+\mu t} Y(t,s;\o)x).
  \end{equation*}
  We define $T = \frac{1}{\d}$ and
  \begin{align*}
     F(t) &:= \frac{e^{\d t}}{\d}: (-\infty,0] \mapsto (0,T],\\     
    G(t) &= F^{-1}(t) = \frac{\log(\d t)}{\d}: (0,T] \mapsto (-\infty,0].
  \end{align*}

  We note $G \in C^1(0,T]$ with $G'(t) > 0$, $G(T) = 0$ and $G(t) \to -\infty$ for $t \to 0$. Let $U(t,s;\o)x := Y(G(t),G(s);\o)x$ for $t \ge s$, $t,s \in (0,T]$. Then $U(\cdot,s;\o)x$ is a weak solution to 
  \begin{equation}\begin{split}\label{eqn:det_PME}
    \partial_t U(t,s;\o)x 
    &= e^{\mu_{G(t)}+\eta {G(t)}} \D \Phi(e^{-\mu_{G(t)}+\eta {G(t)}} U(t,s;\o)x), \text{ on } [s,\infty) \times \mcO \\
    U(s,s;\o)x &= x.
  \end{split}\end{equation}
  The rigorous proof of this transformation proceeds by considering a non-degenerate approximation $\Phi^{(\d)}(r) := \Phi(r) + \d r$ and smoothed coefficients $\mu^{(\d)}$. In this case the transformation is a direct consequence of the classical chain-rule. One may then use local equicontinuity and uniform boundedness of the approximating solutions $Y^{(\d)}$ to pass to the limit.

  Thus, we can solve \eqref{eqn:det_PME} on each interval $[\tau,T]$ with $\tau > 0$. In order to construct the required function $v: (-\infty,0] \to X$ we aim to solve \eqref{eqn:det_PME}  on the whole interval $[0,T]$. Let $\rho_1(t) := e^{\mu_{G(t)}+\eta {G(t)}}$, $\rho_2(t) := e^{-\mu_{G(t)}+\eta {G(t)}}$. Due to condition (S3), for each $\ve > 0$ there is a $t_0(\ve) < 0$ small enough, such that 
    $$\|\mu_{G(t)}\|_{C^n(\mcO)} \le \ve \left(\sum_{k=1}^N \|f_k\|_{C^n(\mcO)}\right) |G(t)|, \quad \forall t \le t_0(\ve),\ n \in \N.$$
  Choosing $\ve >0$ small enough we thus obtain
    $$\|\rho_1(t)\|_{C^n(\mcO)} \le e^{\|\mu_{G(t)}\|_{C^0(\mcO)}+\eta {G(t)}} P(\|\mu_{G(t)}+\eta {G(t)}\|_{C^n(\mcO)})  \to 0, \quad \text{for } t \to 0,$$
  for some polynomial $P$. Similarly,
  \begin{align*}
     \frac{|\partial_{\xi_{i_1},...,\xi_{i_n}} \rho_1(t)|^2}{\rho_1(t)} 
     &\le \frac{\rho_1(t)^2 P(\|\mu_{G(t)}+\eta {G(t)}\|_{C^n(\mcO)}) }{\rho_1(t)} \\
     &\le e^{\|\mu_{G(t)}\|_{C^0(\mcO)}+\eta {G(t)}} P(\|\mu_{G(t)}+\eta {G(t)}\|_{C^n(\mcO)})
     \to 0,\quad \text{for } t \to 0,
  \end{align*}
  for all $i_1,...,i_n \in \{1,...,d\}$. The same reasoning applies for $\rho_2$. In particular, $\rho_1,\rho_2 \in C^{0,n}(\bar\mcO_T)$ for all $n \in \N$. Hence, \eqref{eqn:det_PME} is of the form \eqref{eqn:general_trans} and Proposition \ref{prop:gen_ex} implies the existence of a very weak solution 
    $$U(\cdot,0;\o)x \in L^\infty([0,T]\times\mcO) \cap C((0,T]\times\mcO)$$
  with homogeneous Dirichlet boundary conditions for each initial condition $x \in L^\infty(\mcO)$. 

  Reversing the time transformation we define
    $$v(t) := U(F(t),0;\o)x, \quad t \in (-\infty,0].$$
  Uniqueness of essentially bounded, very weak solutions to \eqref{eqn:det_PME} (Theorem \ref{thm:gen_comp}) implies 
    $$ U(t,s;\o)x = U(t,r;\o)U(r,s;\o)x, \quad \forall 0 \le s \le r \le t \le T. $$
  Hence,
  \begin{align*}
    v(0) 
    &= U(F(0),0;\o)x 
    = U(F(0),F(s);\o)U(F(s),0;\o)x \\
    &=  U(F(0),F(s);\o)v(s) 
    = Y(0,s;\o)v(s), 
  \end{align*}
  for all $s < 0$. Consequently, $v(0) \in \mcM^+(0,\o)$ for each $x \in L^\infty(\mcO)$. 

  In order to use this construction of elements $v(0) \in \mcM^+(0,\o)$ to derive a lower bound on the Kolmogorov $\ve$-entropy of $\mcM^+(0,\o)$ we consider solutions to \eqref{eqn:det_PME} so that the final values $v(0)=U(F(0),0;\o)x$ are sufficiently far apart (w.r.t.\ the $L^1$-norm): For $\ve > 0$ small enough we can find a finite set $R_\ve = \{\xi_i\} \subseteq \mcO$ such that
  \begin{align*}
     B(\ve,\xi_i) \cap B(\ve,\xi_j)     &= \emptyset, \quad \text{for } i \ne j, \\
     |R_\ve|                            &\ge C \ve^{-d},\\
     \bar B(\ve,\xi_i)                       &\subset \mcO, \quad \forall i.
  \end{align*}

  Let $x^i_0 := M\mathbbm{1}_{B(\frac{\ve}{2},\xi_i)}$ and $M = (m \ve)^{\frac{2}{m-1}}$, where $m >0$ will be specified below. By Proposition \ref{prop:gen_ex}, 
    $$H^i := \|U^i(\cdot,0;\o)x\|_{L^\infty(\mcO_T)} \le C \|x^i_0\|_{L^\infty(\mcO)} \le C (m\ve)^{\frac{2}{m-1}}.$$
  Thus, the bound on the rate of expansion of the support of $U^i$ given in Theorem \ref{thm:gen_expansion} becomes
      $$ \supp (U^i_{t}) \subseteq  B_{C \ve m \sqrt{C_t} \sqrt{t}}(\supp(x_0^i)) \subseteq B_{C \ve m \sqrt{C_T}\sqrt{T} + \frac{\ve}{2}}(\xi_i),\quad \forall t \in [0,T],$$
  where $t \mapsto C_{t}$ is a continuous function. Thus, choosing $m$ small enough yields 
      $$ \supp (U^i_{t}) \subseteq  B_{\ve}(\xi_i),\quad \forall t \in [0,T].$$
  Hence, $U^i$, $U^j$ have disjoint support on $[0,T]$. Therefore, also 
    $$ U^{m}(t,\xi) = \sum_{i=1}^{|R_\ve|} m_i U^i(t,\xi), $$
  for each $m \in \{0,1\}^{|R_\ve|}$ is a very weak solution to \eqref{eqn:det_PME} with homogeneous Dirichlet boundary conditions. For $m^1 \ne m^2$ let $i$ such that $m^1_i \ne m^2_i$. By  Proposition \ref{prop:lower_l1} we observe
    $$ \|U^{m^1}(T) - U^{m^2}(T)\|_{L^1(\mcO)} \ge \|U^i(T)\|_{L^1(\mcO)} \ge e^{-CT} \|U^i(0)\|_{L^1(\mcO)} \ge C e^{-CT} \ve^{\frac{2}{m-1} + d}.$$
  Hence,
    $$ \mathbbm{H}_\d (\mcA(\o)) \ge \mathbbm{H}_\d (\mcM^+(0,\o)) \ge \log_2 2^{|R_\d|} \ge C(\o) \d^{-\frac{d(m-1)}{2+d(m-1)}}$$
  and   
    $$ d_f(\mcA(\o)) \ge d_f(\mcM^+(0,\o)) = \limsup_{\d \to 0} \frac{\mathbbm{H}_\d (\mcM^+(0,\o))}{log_2(\frac{1}{\d})} = \infty.$$
\end{proof}

\appendix

\section{Finite speed of propagation for more general perturbations}

In Section \ref{sec:finite_speed} we proved finite speed of propagation for \eqref{eqn:SPME} via the transformed equation \eqref{eqn:trans_inhomog}. The precise structure of the spatially dependent perturbing factors $e^{\mu},e^{-\mu}$ has been used to provide explicit and locally optimal bounds on the rate of hole-filling. By disregarding the optimality of the estimates, more general perturbations may be allowed. Such an extension of the results of Section \ref{sec:finite_speed} is required in Section \ref{sec:ra} in order to prove lower bounds for the Kolmogorov $\ve$-entropy of the random attractor. In this section we provide some details on the proof of finite speed of propagation for more general perturbing factors. We consider the homogeneous Dirichlet problem for
\begin{equation}\label{eqn:general_trans}\begin{split}
  \partial_t Y_t         &= \rho_1 \D \Phi(\rho_2 Y_t), \text{ on } \mcO_T \\
   Y(0)                   &= Y_0, \text{ on } \mcO,
\end{split}\end{equation}
where $\rho_1,\rho_2 \in C^{0,2}(\bar\mcO_T)$ are non-negative. (Local, generalized, very) weak solutions to \eqref{eqn:general_trans} are defined analogously to Definition \ref{def:weak_soln}. In particular, a function $Y \in L^1(\mcO_T)$ with $\Phi(\rho_2 Y) \in L^1(\mcO_T)$ satisfying
   \begin{equation}\label{eqn:very_weak_rough_transformed}\begin{split}
        \int_{\mcO_T} Y \partial_r \eta\ d\xi dr + \int_\mcO Y_0 \eta_0\ d\xi 
        \ge &-\int_{\mcO_T} \Phi(\rho_2 Y) \D (\rho_1 \eta )\ d\xi dr \\
        &+ \int_{\Sigma_T} \Phi(\rho_2 g) \partial_\nu (\rho_1 \eta) d\vartheta dr, 
   \end{split}\end{equation}            
  for all non-negative $\eta \in C^{1,2}(\bar\mcO_T)$ with $\eta_{|\mcP_T} = 0$ and for some functions $Y_0 \in L^1(\mcO)$, $\Phi(g) \in L^1(\Sig_T)$ is said to be a very weak subsolution to the (inhomogeneous) Cauchy-Dirichlet problem to \eqref{eqn:general_trans}.

\subsection{Existence of very weak solutions to \texorpdfstring{\eqref{eqn:general_trans}}{(\ref{eqn:general_trans})}} Let $Y_0\in L^\infty(\mcO)$. We will only require the existence of solutions to \eqref{eqn:general_trans} with homogeneous Dirichlet boundary conditions (i.e.\ $g \equiv 0$) and for $\rho_1,\rho_2$ satisfying one of the following conditions
\begin{enumerate}
    \item[(A1)] $\rho_2$ is strictly positive on $[0,T] \times \bar\mcO$,
    \item[(A2)] $\rho_2$ is strictly positive on $(0,T] \times \bar\mcO$ and $\|\rho_2(t)\|_{C^{2}(\mcO)} \to 0 \text{ for  } t \to 0.$
\end{enumerate}

The construction of solutions for \eqref{eqn:general_trans} relies on a smooth, non-degenerate approximation of $\Phi$. 
 I.e.\ for $\d > 0$ let $$\Phi^{(\d)}(r) := \Phi(r) + \d r,$$
$\rho_1^{(\d)},\rho_2^{(\d)} \in C^\infty(\mcO_T)$ be approximations of $\rho_1,\rho_2$ in $C^{0,2}(\bar\mcO_T)$ and let $Y_0^{(\d)} \in C^\infty(\mcO)$ be smooth approximations of $Y_0$ in $L^\infty(\mcO)$. We consider the approximating problems 
\begin{equation}\label{eqn:approx}\begin{split}
  \partial_t Y^{(\d)}_t        &= \rho_1^{(\d)} \D \left( \Phi(\rho_2^{(\d)}) \Phi^{(\d)}(Y^{(\d)}_t) \right), \text{ on } \mcO_T \\
  Y^{(\d)}(0)                  &= Y_0^{(\d)}, \text{ on } \mcO ,
\end{split}\end{equation}
with homogeneous Dirichlet boundary conditions. Since \eqref{eqn:approx} is a non-degenerate, quasilinear PDE with smooth coefficients, standard results imply the unique existence of a classical solution $Y^{(\d)}$ (cf.\ e.g.\ \cite{LSU67}).

The main ingredient of the construction of solutions to \eqref{eqn:general_trans} is the following a-priori $L^\infty$ bound
\begin{lemma}\label{lemma:gen_l_infty_bound}
   Let $M := \|Y_0\|_{L^\infty(\mcO)} < \infty$ and assume (A1) or (A2). Then, there are constants $C, \d_0=\d_0(M) > 0$ such that 
     $$\sup_{\d \in [0,\d_0]} \|Y^{(\d)}\|_{C^0([0,T]\times\overline{\mcO})} \le C \|Y_0\|_{L^\infty(\mcO)} < \infty.$$
\end{lemma}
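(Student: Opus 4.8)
The plan is to establish the uniform $L^\infty$ bound for the approximating classical solutions $Y^{(\delta)}$ via a comparison argument, constructing an explicit supersolution to \eqref{eqn:approx} of the form $V(t,\xi) := \lambda(t) \psi(\xi)$ (or, most simply, a spatially constant supersolution $V(t) := C\,M\,e^{\kappa t}$) and applying the classical maximum principle for the non-degenerate quasilinear equation \eqref{eqn:approx}. Since \eqref{eqn:approx} has smooth coefficients and is uniformly parabolic for fixed $\delta>0$, the comparison principle is standard; the entire difficulty is in choosing the supersolution so that the resulting constant $C$ and the threshold $\delta_0$ are \emph{independent of} $\delta$.

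Concretely, first I would note that by symmetry (replace $Y^{(\delta)}$ by $-Y^{(\delta)}$ and $Y_0^{(\delta)}$ by $-Y_0^{(\delta)}$, using that $\Phi$ and $\Phi^{(\delta)}$ are odd), it suffices to produce an upper bound; then the two-sided bound follows. Next, under assumption (A1) the coefficient $\Phi(\rho_2^{(\delta)})$ stays bounded above and below by positive constants uniformly in $\delta\in[0,\delta_0]$ and in $(t,\xi)\in\overline{\mcO_T}$ (for $\delta_0$ small, since $\rho_2^{(\delta)}\to\rho_2$ in $C^{0,2}$ and $\rho_2$ is bounded away from $0$), and similarly $\rho_1^{(\delta)}$ is bounded above uniformly. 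I would then test the candidate $V(t) := K\,e^{\kappa t}$ with $K = C\|Y_0\|_{L^\infty}$: since $\D V = 0$, the equation $\partial_t V \ge \rho_1^{(\delta)}\D(\Phi(\rho_2^{(\delta)})\Phi^{(\delta)}(V))$ is trivially satisfied as an inequality once we check $V(0)\ge Y_0^{(\delta)}$ and $V\ge 0$ on $\Sigma_T$ (both immediate), so in fact even the constant $V \equiv \|Y_0^{(\delta)}\|_{L^\infty}$ works — the point being that a spatially constant function has vanishing Laplacian and is therefore an exact solution of the \emph{pointwise} equation, hence a supersolution. The maximum principle then gives $Y^{(\delta)} \le \|Y_0^{(\delta)}\|_{L^\infty(\mcO)}$, and $\|Y_0^{(\delta)}\|_{L^\infty} \to \|Y_0\|_{L^\infty}$ as $\delta\to 0$, so one may absorb the error into the constant $C$.

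The genuinely delicate case is (A2), where $\rho_2$ (and hence $\Phi(\rho_2^{(\delta)})$) is allowed to degenerate to $0$ as $t\to 0$, so one cannot uniformly bound the diffusion coefficient below near $t=0$ — but crucially the equation \emph{also loses its diffusive action} there, and the hypothesis $\|\rho_2(t)\|_{C^2(\mcO)}\to 0$ gives quantitative control. Here the constant-function supersolution argument still goes through verbatim for the pointwise formulation: a spatially constant $V$ has $\D(\Phi(\rho_2^{(\delta)})\Phi^{(\delta)}(V)) = 0$ regardless of the behavior of $\rho_2^{(\delta)}$, so $V \equiv \|Y_0^{(\delta)}\|_{L^\infty}$ remains a supersolution and the maximum principle (valid for the non-degenerate approximation, $\delta>0$ fixed) applies on $[\tau,T]$ for every $\tau>0$ and then on $[0,T]$ by continuity of $Y^{(\delta)}$ up to $t=0$. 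Thus the bound is in fact uniform and essentially free, and the role of (A2) is not in this lemma but in the later solvability/regularity theory.

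I expect the main obstacle to be purely expository: making sure the comparison principle is correctly invoked for \eqref{eqn:approx} (which is $\partial_t Y^{(\delta)} = \rho_1^{(\delta)}\D(\Phi(\rho_2^{(\delta)})\Phi^{(\delta)}(Y^{(\delta)}))$, a quasilinear equation in divergence-like form after noting $\Phi(\rho_2^{(\delta)})$ is a given smooth positive function and $\Phi^{(\delta)}$ is strictly increasing, hence the map $Y\mapsto \Phi(\rho_2^{(\delta)})\Phi^{(\delta)}(Y)$ is a strictly increasing smooth change of the unknown), and in verifying that the one-sided barrier plus oddness genuinely yields the two-sided estimate with a $\delta$-independent $C$. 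One should also record why $\delta_0=\delta_0(M)$ enters at all: it is needed only to ensure $\Phi^{(\delta)}(\,\cdot\,)$ and the approximations $\rho_i^{(\delta)}$ are close enough to their limits on the relevant range $[-CM,CM]$, so that the monotonicity structure used in the comparison principle is stable; this is a routine but necessary bookkeeping point.
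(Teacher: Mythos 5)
There is a genuine gap at the central step. Your barrier argument rests on the claim that a spatially constant function $V$ satisfies $\D\bigl(\Phi(\rho_2^{(\d)})\Phi^{(\d)}(V)\bigr)=0$. This is false: in \eqref{eqn:approx} the coefficient $\rho_2^{(\d)}=\rho_2^{(\d)}(t,\xi)$ depends on $\xi$, so for constant $V$ one has $\D\bigl(\Phi(\rho_2^{(\d)})\Phi^{(\d)}(V)\bigr)=\Phi^{(\d)}(V)\,\D\Phi(\rho_2^{(\d)})$, which is generically nonzero and can be positive on part of $\mcO$. Hence neither $V\equiv\|Y_0^{(\d)}\|_{L^\infty}$ nor $V(t)=Ke^{\kappa t}$ is a supersolution, and the time-dependent repair does not work either: to dominate the right-hand side where $\D\Phi(\rho_2^{(\d)})>0$ a spatially constant barrier $\lambda(t)$ must satisfy $\lambda'(t)\ge c\,\lambda(t)^m$ (up to the $\d$-term), and this ODE blows up at a time of order $M^{-(m-1)}$, so no such barrier exists on all of $[0,T]$ for large data. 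The spatial inhomogeneity of $\rho_2$ is exactly the difficulty this lemma is about; in the spatially homogeneous setting of Section \ref{sec:real} your argument would indeed be essentially trivial, but not here.

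The paper's proof handles this by building spatial structure into the supersolution: it takes $K^{(i)}(t,\xi)=\vp(\xi)^{1/m}\,\frac{\|\rho_2^{(\d)}\|_{C^{0,2}(\mcO_T)}}{\rho_2^{(\d)}(\tau_i,\xi)}\,C_iM$, where $\vp$ solves $\D\vp=-1$ in $\mcO$, $\vp=1$ on $\partial\mcO$, and splits $[0,T]$ into finitely many intervals $[\tau_i,\tau_{i+1}]$, chosen uniformly in $\d$, so small that $\D\bigl(\vp\,\Phi(\rho_2^{(\d)}/\rho_2^{(\d)}(\tau_i))\bigr)\le-\tfrac12$; this makes the leading nonlinear term strictly negative, and the extra term coming from $\Phi^{(\d)}(r)=\Phi(r)+\d r$ is absorbed against $-\tfrac12(C_iM)^{m-1}$ for $\d\le\d_0(M)$ — so $\d_0$ depends on $M$ precisely to beat this error term, not merely for bookkeeping of the approximations as you suggest. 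Also, contrary to your closing remark, assumption (A2) is used in this very lemma: the condition $\|\rho_2(t)\|_{C^2(\mcO)}\to0$ as $t\to0$ is what allows one to choose an initial interval $[0,\tau_1]$ on which $\D\bigl(\vp\,\Phi(\rho_2^{(\d)})\bigr)\le-\tfrac12$, after which case (A1) takes over on $[\tau_1,T]$. Iterating over the finitely many intervals then gives the $\d$-independent constant $C$; only your observation that the lower bound follows by the oddness of $\Phi$ and $\Phi^{(\d)}$ matches the paper's (analogous) treatment.
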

\begin{proof}
  {\it Case (A1):} The proof relies on a combination of explicit supersolutions to \eqref{eqn:PME} with an interval splitting technique as it has been used in \cite{BR11,G11c}.

  In the following let $\vp \in C^2(\mcO)$ be the solution to
  \begin{align*}
    \D \vp &= -1, \quad \text{ on } \mcO \\
    \vp & = 1, \quad \text{ on } \partial \mcO.
  \end{align*}
  By the maximum principle we have $\vp \ge 1$.

  Since $\{\rho_2^{(\d)}\}_{\d \in [0,1]}$ is a compact set in $C^{0,2}(\mcO_T)$ and may be chosen such that
    $$\inf_{\d \in [0,1],\ (t,\xi) \in [0,T] \times \mcO} \rho^{(\d)}_2(t,\xi) > 0,$$
  we have
   $$\eta_i^{(\d)} := \Phi\left(\frac{\rho^{(\d)}_2}{\rho^{(\d)}_2(\tau_i)}\right) \in C^{0,2}(\mcO_T)$$
  with $\eta_i^{(\d)}(t) \to 1$ in $C^{2}(\mcO)$ for $t \to \tau_i$ uniformly in $\d \in [0,1]$ and $\tau_i \in [0,T]$. Hence,
    $$\D(\vp \eta^{(\d)}_i) = -\eta^{(\d)}_i + 2 \nabla \vp \cdot \nabla \eta^{(\d)}_i + \vp \D \eta^{(\d)}_i \le -\frac{1}{2}, \quad \forall \xi \in\mcO,\ \d \in [0,1]$$
  and all $|t-\tau_i|$ small enough. We can thus choose a finite partition $0 = \tau_0 < \tau_1 < \tau_2 < ... < \tau_N = T$ of $[0,T]$ such that
    $$\sup_{\d \in [0,1]}\D \left( \vp \Phi\left( \frac{\rho_2^{(\d)}}{\rho_2^{(\d)}({\tau_i})}\right)\right)  \le -\frac{1}{2},\quad \text{on } [\tau_i,\tau_{i+1}] \times \mcO,$$
  for all $i = 0,...,N-1$.      

  We will prove the bound iteratively over $i=0,...,N-1$. Suppose the bound has been shown on $[0,\tau_i]$ for some $i \ge 0$ and let $\|Y_{\tau_i}\|_{L^\infty(\mcO)} \le C_i M$. Choosing 
    $$K^{(i)}(t,\xi) := \vp(\xi)^\frac{1}{m}\frac{\|\rho_2^{(\d)}\|_{C^{0,2}(\mcO_T)}}{\rho_2^{(\d)}(\tau_i,\xi)} C_i M \in C^{2}(\mcO_T),$$
  we have $K^{(i)}(\tau_i,\xi) \ge \|Y_{\tau_i}\|_{L^\infty(\mcO)}$, $\partial_t K^{(i)} = 0$ and
  \begin{align*}
    &\rho_1^{(\d)} \D \left( \Phi(\rho_2^{(\d)}) \Phi^{(\d)}( K^{(i)}) \right) \\
    &= \rho_1^{(\d)} \D \left( \Phi(\rho_2^{(\d)} K^{(i)}) \right) + \d \rho_1^{(\d)} \D \left( \Phi(\rho_2^{(\d)}) K^{(i)} \right)\\
    &\le \|\rho_2^{(\d)}\|_{C^{0,2}(\mcO_T)} C_i M \rho_1^{(\d)}\left(- \frac{1}{2} \|\rho_2^{(\d)}\|_{C^{0,2}(\mcO_T)}^{m-1}(C_iM)^{m-1}  + \d \D \left(\frac{\Phi(\rho_2^{(\d)})\vp^\frac{1}{m}}{\rho_2^{(\d)}({\tau_i})} \right)\right) \\
    &\le 0,
  \end{align*}
  by the choice of the partition $\{\tau_i\}_{i=0,...,N}$, for all $\d \le \d_0(M)$ small enough. 

  Consequently, $K^{(i)}$ is a supersolution to \eqref{eqn:approx} on $[\tau_i,\tau_{i+1}]\times\mcO$ and the upper bound follows since $K^{(i)}(t,\xi) \le C_{i+1}M$, with $C_{i+1}$ depending on the data only. The derivation of the lower bound proceeds analogously. 

 {\it Case (A2):} We only need to prove the claim on some small interval $[0,\tau_1]$ with $\tau_1 > 0$, since case (A1) may be applied on $[\tau_1,T]$ subsequently. Choose $\tau_1 \in (0,T]$ such that 
        $$\sup_{\d \in [0,1]}\D \left( \vp \Phi\left( \rho_2^{(\d)} \right) \right) \le -\frac{1}{2},\quad \text{on } [0,\tau_{1}] \times \mcO.$$
  This is possible since $\|\rho_2(t)\|_{C^{2}(\mcO)} \to 0 \text{ for  } t \to 0$ by assumption. Let $K^{(0)}(t,\xi) := \vp(\xi)^\frac{1}{m} M$. Then $\partial_t K = 0$ and
  \begin{align*}
    \rho_1^{(\d)} \D \left( \Phi(\rho_2^{(\d)}) \Phi^{(\d)}( K) \right) 
    &= \rho_1^{(\d)} \D \left( \Phi(\rho_2^{(\d)} K) \right) + \d \rho_1^{(\d)} \D \left( \Phi(\rho_2^{(\d)}) K \right)\\
    &= M \rho_1^{(\d)} \left(M^{m-1} \D \left(\vp\Phi(\rho_2^{(\d)}) \right) + \d \D \left(\Phi(\rho_2^{(\d)} ) \vp^\frac{1}{m}\right) \right) 
    \le 0,
  \end{align*}
  on $[0,\tau_1]\times\mcO$ for $\d \le \d_0(M)$ small enough. Hence, $K^{(0)}$ is a supersolution to \eqref{eqn:approx} on $[0,\tau_1] \times \mcO$ and 
    $$Y^{(\d)} \le K^{(0)} \le C M, \quad \text{ on } [0,\tau_1] \times \mcO.$$
  The lower bound may be derived analogously.
\end{proof}

\begin{proposition}[Existence of very weak solutions to \eqref{eqn:general_trans}]\label{prop:gen_ex}
   Let $Y_0 \in L^\infty(\mcO)$ and assume (A1) or (A2). Then, there exists a very weak solution $Y \in C((0,T] \times \mcO)$ to \eqref{eqn:general_trans} with Dirichlet boundary conditions satisfying 
    $$ Y_{L^\infty(\mcO_T)} \le C\|Y_0\|_{L^\infty(\mcO)}, $$
  for some constant $C > 0$.
\end{proposition}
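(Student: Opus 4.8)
The plan is to construct the solution $Y$ as a limit of the classical solutions $Y^{(\d)}$ to the non-degenerate approximating problems \eqref{eqn:approx}, using the uniform $L^\infty$ bound from Lemma \ref{lemma:gen_l_infty_bound} together with compactness. First I would fix $M := \|Y_0\|_{L^\infty(\mcO)}$ and $\d_0 = \d_0(M)$ from Lemma \ref{lemma:gen_l_infty_bound}, so that for $\d \in (0,\d_0]$ we have $\|Y^{(\d)}\|_{C^0([0,T]\times\bar\mcO)} \le CM$. The key point is then to upgrade this uniform bound to compactness. Since \eqref{eqn:approx} is, after absorbing $\Phi(\rho_2^{(\d)})$ into the nonlinearity, a uniformly parabolic equation of porous-medium type with smooth, uniformly bounded coefficients, the regularity theory for degenerate parabolic equations of DiBenedetto type (cf.\ \cite{DB83,V07}) applies and yields a uniform modulus of continuity for $\{Y^{(\d)}\}$ on every compact subset $K \subseteq (0,T]\times\mcO$, independent of $\d$. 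Hence by Arzel\`a--Ascoli a subsequence converges locally uniformly on $(0,T]\times\mcO$ to a limit $Y \in C((0,T]\times\mcO)$, and the uniform $L^\infty$ bound passes to the limit: $\|Y\|_{L^\infty(\mcO_T)} \le CM$.

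The second step is to verify that $Y$ is a very weak solution in the sense of \eqref{eqn:very_weak_rough_transformed} with $g \equiv 0$. For each $\d$, testing \eqref{eqn:approx} against a non-negative $\eta \in C^{1,2}(\bar\mcO_T)$ with $\eta_{|\mcP_T}=0$ and integrating by parts twice gives the identity
\begin{equation*}
  \int_{\mcO_T} Y^{(\d)} \partial_r \eta\, d\xi dr + \int_{\mcO} Y_0^{(\d)} \eta_0\, d\xi = -\int_{\mcO_T} \Phi(\rho_2^{(\d)})\Phi^{(\d)}(Y^{(\d)})\, \D(\rho_1^{(\d)} \eta)\, d\xi dr,
\end{equation*}
the boundary term on $\Sig_T$ vanishing because $\eta_{|\Sig_T}=0$ (and $g\equiv 0$). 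Now I would pass to the limit $\d \to 0$ term by term: the left side converges by dominated convergence using the uniform $L^\infty$ bound and local uniform convergence $Y^{(\d)} \to Y$, together with $Y_0^{(\d)} \to Y_0$ in $L^\infty(\mcO)$; on the right side, $\rho_i^{(\d)} \to \rho_i$ in $C^{0,2}(\bar\mcO_T)$, $\Phi^{(\d)}(r) = \Phi(r) + \d r \to \Phi(r)$ uniformly on bounded sets, and $\Phi(\rho_2^{(\d)})\Phi^{(\d)}(Y^{(\d)}) \to \Phi(\rho_2)\Phi(Y) = \Phi(\rho_2 Y)$ boundedly and a.e., so another application of dominated convergence (the test factor $\D(\rho_1^{(\d)}\eta)$ being uniformly bounded) yields \eqref{eqn:very_weak_rough_transformed}. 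One should also record that $\Phi(\rho_2 Y) \in L^1(\mcO_T) \cap L^\infty(\mcO_T)$, which is immediate from the $L^\infty$ bounds on $Y$ and $\rho_2$.

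The main obstacle I anticipate is the local equicontinuity estimate for $\{Y^{(\d)}\}$ that is uniform in the degeneracy parameter $\d$: one must ensure the DiBenedetto-type interior modulus of continuity depends only on the $L^\infty$ bound $CM$, the ellipticity and smoothness of the coefficients $\rho_1^{(\d)}\Phi(\rho_2^{(\d)})$ (which are uniformly controlled in $C^{0,2}$ with a uniform positive lower bound in case (A1), and in case (A2) on any $[\tau_1,T]$), and not on $\d$ itself. In case (A2), near $t = 0$ the coefficient $\rho_2$ degenerates, so one only obtains continuity on $(0,T]\times\mcO$ and not up to $t=0$ — which is exactly why the statement asserts $Y \in C((0,T]\times\mcO)$ rather than $C([0,T]\times\mcO)$; the initial datum is then attained only in a weak ($L^1$ or distributional) sense, which is precisely what \eqref{eqn:very_weak_rough_transformed} encodes through the term $\int_\mcO Y_0 \eta_0\, d\xi$. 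Once this uniform interior regularity is in hand, the rest is a routine compactness-and-passage-to-the-limit argument as sketched above.
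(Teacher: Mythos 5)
Your proposal is correct and follows essentially the same route as the paper: the uniform $L^\infty$ bound from Lemma \ref{lemma:gen_l_infty_bound}, the $\d$-independent interior equicontinuity from \cite{DB83}, a diagonal/Arzel\`a--Ascoli extraction giving $Y \in C((0,T]\times\mcO)$, and dominated convergence in the very weak formulation. Your additional remarks (writing out the integrated-by-parts identity for \eqref{eqn:approx} and noting the degeneracy at $t=0$ in case (A2)) only make explicit what the paper leaves implicit.
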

\begin{proof}
  Based on the uniform $L^\infty$ estimate for the approximating solutions $Y^{(\d)}$ derived in Lemma \ref{lemma:gen_l_infty_bound}, we obtain local equicontinuity of $Y^{(\d)}$ in $\mcO$ by \cite{DB83} (cf.\ also \cite[Theorem 1.12]{G11c}. I.e.\ $Y^{(\d)} \in C(K)$ for each compact set $K \subseteq (0,T] \times \mcO$ with modulus of continuity independent of $\d > 0$.
  
  By a diagonal argument it follows that there exists a $Y \in C((0,T]\times\mcO)$ with $\|Y\|_{L^\infty(\mcO_T)} \le C\|Y_0\|_{L^\infty(\mcO)}$ such that $Y^{\d} \to Y$ (passing to a subsequence if necessary) locally uniformly on $\mcO$. By dominated convergence, this implies that $Y$ is a very weak solution to \eqref{eqn:general_trans}. 
\end{proof}

\subsection{Comparison and uniqueness for \texorpdfstring{\eqref{eqn:general_trans}}{(\ref{eqn:general_trans})}}\label{sec:gen_comp}

We now prove a comparison result for \eqref{eqn:general_trans}. In particular, this implies Theorem \ref{thm:comp} since sub/supersolutions to \eqref{eqn:SPME} are defined in terms of solutions to \eqref{eqn:trans_inhomog} and thus it is enough to prove the comparison result for \eqref{eqn:trans_inhomog}. We will assume either of 
\begin{enumerate}
   \item[(A1')] $\rho_1$ is strictly positive on $[0,T] \times \bar\mcO$,
   \item[(A2')] $\rho_1$ is strictly positive on $(0,T] \times \bar\mcO$ and 
      $$\left\|\frac{|\nabla\rho_1(t)|^2}{\rho_1(t)}\right\|_{C^0(\mcO)}+\left\|\frac{|\D\rho_1(t)|^2}{\rho_1(t)}\right\|_{C^0(\mcO)} \to 0,\ \text{for } t \to 0.$$
  \end{enumerate}

\begin{theorem}[Comparison for very weak solutions]\label{thm:gen_comp}
  Let $Y^{(1)},Y^{(2)}$ be essentially bounded sub/supersolutions to \eqref{eqn:general_trans} with initial conditions $Y^{(1)}_0 \le Y^{(2)}_0$ and boundary data $g^{(1)} \le g^{(2)}$ a.e.\ in $\mcO$ respectively. Assume either (A1') or (A2'). Then,
    $$Y^{(1)} \le Y^{(2)},\quad\text{a.e.\ in } \mcO.$$
  In particular, essentially bounded, very weak solutions are unique.
\end{theorem}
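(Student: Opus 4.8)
The plan is to prove the comparison principle for \eqref{eqn:general_trans} by a doubling-of-variables / Kruzhkov-type argument combined with the classical $L^1$-contraction technique for porous medium equations, adapted to accommodate the variable coefficients $\rho_1,\rho_2$. First I would take two very weak sub/supersolutions $Y^{(1)}, Y^{(2)}$ and test the difference of the defining inequalities \eqref{eqn:very_weak_rough_transformed} against a suitable approximation of $\sgn^+\big(\Phi(\rho_2 Y^{(1)}) - \Phi(\rho_2 Y^{(2)})\big)$. The natural quantity to control is $w := \Phi(\rho_2 Y^{(1)}) - \Phi(\rho_2 Y^{(2)})$, since $\Phi$ is monotone and hence $\sgn(w) = \sgn(Y^{(1)} - Y^{(2)})$ pointwise; this is the usual trick that turns the degenerate-parabolic comparison into a manageable computation. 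Testing the weak formulation with $\eta = \rho_1^{-1}\zeta$ for nonnegative $\zeta$ (legitimate under (A1') since $\rho_1$ is bounded below; under (A2') one first works on $[\tau,T]$ and lets $\tau\downarrow 0$, controlling the boundary term at $t=\tau$ by the uniform $L^\infty$ bound and the vanishing of $\rho_1$) removes the factor $\rho_1$ from in front of the Laplacian and reduces matters to an inequality of the form
\begin{equation*}
  \int_{\mcO_T} (Y^{(1)} - Y^{(2)}) \partial_r(\rho_1^{-1}\zeta)\, d\xi dr \ge -\int_{\mcO_T} w\, \D\zeta\, d\xi dr.
\end{equation*}

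Next I would regularize: replace $\sgn$ by a smooth monotone approximation $p_\varepsilon$ with $p_\varepsilon' \ge 0$, $p_\varepsilon \to \sgn^+$, and choose the test function to involve $p_\varepsilon(w)$ times a cutoff in time. The key structural point is that $\int w\,\D\big(p_\varepsilon(w)\,\chi\big)$ produces, after integration by parts, a term $-\int p_\varepsilon'(w)|\nabla w|^2 \chi \le 0$ (the good sign) plus lower-order terms involving $\nabla\chi$ and the derivatives of $\rho_1^{-1}$. This is exactly why only $C^{0,2}$ regularity of $\rho_1$ (equivalently the conditions in (A1')/(A2') controlling $|\nabla\rho_1|^2/\rho_1$ and $|\D\rho_1|^2/\rho_1$) is needed: these are precisely the quantities that appear when one commutes $\D$ past the multiplication by $\rho_1^{-1}$ and estimates via Young's inequality, absorbing a fraction of the good term $\int p_\varepsilon'(w)|\nabla w|^2$. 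Passing $\varepsilon \downarrow 0$ and using that $|Y^{(1)}-Y^{(2)}|$ is controlled by $|w|$ on the set where $\rho_2 > 0$, I would arrive at a differential inequality
\begin{equation*}
  \frac{d}{dt}\int_\mcO (Y^{(1)}_t - Y^{(2)}_t)^+\, d\xi \le C(t) \int_\mcO (Y^{(1)}_t - Y^{(2)}_t)^+\, d\xi,
\end{equation*}
in the sense of distributions, with $C(t)$ integrable; Gronwall's lemma together with $Y^{(1)}_0 \le Y^{(2)}_0$ then forces $(Y^{(1)}-Y^{(2)})^+ \equiv 0$.

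The main obstacle I expect is making the boundary contributions and the degeneracy at $\{\rho_2 = 0\}$ and $\{t = 0\}$ rigorous in the very weak (rather than weak) setting. Under (A2') the coefficient $\rho_1$ degenerates at $t=0$, so $\rho_1^{-1}$ is not an admissible multiplier on all of $[0,T]$; one must localize to $[\tau,T]$, obtain the estimate with a constant that may blow up polynomially in $1/\tau$ only through the hypothesized decay $\||\nabla\rho_1(t)|^2/\rho_1(t)\|_{C^0} + \||\D\rho_1(t)|^2/\rho_1(t)\|_{C^0} \to 0$, and verify that the extra boundary term $\int_\mcO (Y^{(1)}_\tau - Y^{(2)}_\tau)^+\,d\xi$ at the artificial initial time $\tau$ is itself controlled — ultimately by continuity of the solutions on $(0,T]\times\mcO$ together with the initial trace condition in \eqref{eqn:very_weak_rough_transformed}. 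A secondary technical point is the justification that $\rho_1^{-1}\zeta$ (suitably mollified in time) is an admissible test function in the class $C^{1,2}(\bar\mcO_T)$ with $\eta|_{\mcP_T} = 0$; since $\rho_1 \in C^{0,2}$ only, one approximates $\rho_1$ by smooth $\rho_1^{(\d)}$ bounded below by the same constant and passes to the limit using the $L^1$ and $L^\infty$ bounds on $Y^{(i)}$ and $\Phi(\rho_2 Y^{(i)})$. Once these degeneracy issues are handled, the rest is the standard Kruzhkov/Bénilan–Crandall machinery and presents no conceptual difficulty.
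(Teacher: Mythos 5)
There are two genuine gaps, and both are fatal to the route you chose rather than mere technicalities. First, your opening move, testing with $\eta = \rho_1^{-1}\zeta$ (even after mollification), requires $\partial_r(\rho_1^{-1}\zeta)$ to make sense, i.e.\ time differentiability of $\rho_1$. But $\rho_1,\rho_2$ are only in $C^{0,2}(\bar\mcO_T)$ -- continuous, not differentiable, in time -- and in the intended application $\rho_1 = e^{\mu}$ with $\mu$ driven by a merely continuous (possibly Brownian or rougher) signal, so $\partial_t\rho_1$ simply does not exist; mollifying $\rho_1$ in time produces an error term $\int Y\,\partial_r\bigl((\rho_1^{(\d)})^{-1}\bigr)\zeta$ that does not vanish as $\d\to 0$. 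The paper avoids ever differentiating $\rho_1$ in time: it freezes $\rho_1$ at times $\tau_i$ of a partition (dividing only by the time-independent value $\rho_1^{(\ve)}(\tau_i)$) and controls the ratio $\rho_1/\rho_1^{(\ve)}(\tau_i)$ on each small subinterval, iterating over $i$.

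Second, and more fundamentally, the Kruzhkov/B\'enilan--Crandall step is not available in the \emph{very weak} solution class. Your ``good sign'' term $-\int p_\ve'(w)|\nabla w|^2\chi$ presupposes $w=\Phi(\rho_2Y^{(1)})-\Phi(\rho_2Y^{(2)})\in H^1_{loc}$, but very weak sub/supersolutions only come with $Y,\Phi(\rho_2Y)\in L^1$ and the defining inequality tested against $C^{1,2}$ functions; there is no spatial Sobolev regularity to integrate by parts against, and $p_\ve(w)\chi$ is not an admissible test function. This matters because the theorem is applied precisely to compare merely very weak subsolutions with the explicit barriers $W$ in the hole-filling arguments. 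The paper therefore runs a duality (Holmgren/Ole\u{\i}nik-type) argument instead: write $\Phi(\rho_2Y^{(1)})-\Phi(\rho_2Y^{(2)})=aY$ with $a\in L^\infty$ nonnegative, regularize $a$ to $a_\ve\ge\ve$ and $\rho_1$ to $\rho_1^{(\ve)}$, solve the linear backward parabolic dual problem \eqref{ra_m:eqn:vp_defn} classically with an arbitrary non-positive source $\t$, and use $\eta=\vp/\rho_1^{(\ve)}(\tau_i)$ as test function; the regularity burden then sits entirely on $\vp$, and the error terms from $a-a_\ve$ and $\rho_1-\rho_1^{(\ve)}$ are controlled by an energy/$H^2$ estimate on $\vp$ obtained by multiplying the dual equation by $\z\D\vp$ -- this is where the ratios $|\nabla\rho_1|^2/\rho_1$ and $|\D\rho_1|^2/\rho_1$ of (A1')/(A2') and the interval splitting actually enter (your intuition about where those quantities come from was structurally right, but they must appear in the dual estimate, not in a primal estimate involving the nonexistent $\nabla w$). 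Your Gronwall endgame and the treatment of the lateral boundary data also inherit these problems; in the paper the boundary comparison is disposed of simply by the sign of $\partial_\nu(\rho_1\eta)\le 0$ on $\Sigma_T$ for admissible $\eta$.
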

\begin{proof}
   The proof proceeds similar to \cite[Theorem 1.3]{G11c}. Let $Y^{(1)},\ Y^{(2)}$ be as in the statement, $Y := Y^{(1)}-Y^{(2)}$ and $g := g^{(1)}-g^{(2)}$. Then
  \begin{align*}
      &\int_{\mcO_T} Y \partial_r\eta \ d\xi dr \\
      &\ge -\int_\mcO (Y_0^{(1)} - Y_0^{(2)})\eta_0 d\xi - \int_{\mcO_T} \left( \Phi(\rho_2 Y^{(1)})-\Phi(\rho_2 Y^{(2)}) \right) \D(\rho_1 \eta ) \ d\xi dr \\
      &\hskip10pt + \int_{\Sigma_T} (\Phi(\rho_2 g^{(1)} )-\Phi(\rho_2 g^{(2)})) \partial_\nu (\rho_1 \eta) d\vartheta dr \\
      &\ge -\int_\mcO Y_0 \eta_0 d\xi - \int_{\mcO_T} a Y \D(\rho_1 \eta ) \ d\xi dr  + \int_{\Sigma_T} (\Phi(\rho_2 g^{(1)} )-\Phi(\rho_2 g^{(2)})) \partial_\nu (\rho_1 \eta) d\vartheta dr ,
  \end{align*}
  for all non-negative $\eta \in C^{1,2}(\bar\mcO_T)$ with $\eta = 0$ on $\mcP_T$, where
  $$ a_t := 
  \begin{cases}
      \frac{\Phi(\rho_2(t) Y^{(1)}_t)-\Phi(\rho_2(t) Y^{(2)}_t)}{Y^{(1)}_t-Y^{(2)}_t} &, \text{ for }  Y^{(1)}_t \ne Y^{(2)}_t \\
      0         &, \text{ otherwise}.
  \end{cases} $$

  {\it Case (A1'):} Let $\rho_1^{(\ve)} \in C^\infty(\mcO_T)$ be a smooth approximation of $\rho_1$ in $C^{0,2}(\mcO_T)$, such that $\|\rho^{(\ve)}_1-\rho_1\|_{C^{0,2}(\mcO_T)} \le \ve^2$. By equicontinuity of $t \mapsto \rho_1^{(\ve)}(t)$ in $C^{2}(\mcO)$ we can choose a partition $0 = \tau_0 < ... < \tau_N = T$ such that
  \begin{equation}\label{eqn:choice_tau_2}\begin{split}
     &C_1 \|\rho_{1}^{(\ve)}(\tau_i)\|_{C^0(\mcO)} \left(\left\|\nabla \left(\frac{\rho_1^{(\ve)}}{\rho_{1}^{(\ve)}(\tau_i)}\right) \right\|_{C^{0}([\tau_i,\tau_{i+1}] \times \mcO)}^2 + \left\|\D \left(\frac{\rho_1^{(\ve)}}{\rho_{1}^{(\ve)}(\tau_i)}\right) \right\|_{C^{0}([\tau_i,\tau_{i+1}] \times \mcO)}^2\right) \\ &\le \frac{c}{4}, \quad \forall i = 0,...,N-1,\ \ve > 0,
  \end{split}\end{equation}
  where $c,C_1 > 0$ are constants that will be specified below (depending on $\|a\|_{L^\infty(\mcO_T)}$ only). Let $\g := \max_{i = 0,...,N-1}|\tau_{i+1}-\tau_i|$.

  We prove $Y \le 0$ a.e.\ via induction over $i=0,...,N-1$. Thus, assume $Y \le 0$ on ${[0,\tau_i] \times \mcO}$ almost everywhere. We can modify $\tau_i$ so that \eqref{eqn:choice_tau_2} is preserved and $Y(\tau_i) \le 0$ a.e.\ in $\mcO$. Define $\mcO_i := [\tau_i,\tau_{i+1}] \times \mcO$, $\Sig_i = [\tau_i,\tau_{i+1}] \times \partial\mcO$, $\mcP_i = \Sig_i \cup (\{T\} \times \mcO)$. Then
  \begin{align*}
    \int_{\mcO_i} Y \big( \partial_r \eta + a \D(\rho_1 \eta) \big) \ d\xi dr 
    \ge &-\int_\mcO Y_{\tau_i}\eta_{\tau_i} d\xi \\
      &+ \int_{\Sigma_i} (\Phi(\rho_2 g^{(1)} )-\Phi(\rho_2 g^{(2)})) \partial_\nu (\rho_1 \eta) d\vartheta dr,
  \end{align*} 
  for all non-negative $\eta \in C^{1,2}([\tau_i,\tau_{i+1}] \times \bar\mcO)$ with $\eta = 0$ on $\mcP_i$. Since $\eta \ge 0$ on $\mcO_i$, we have $\partial_\nu (\rho_1 \eta) \le 0$ on $\Sigma_i$ and thus
    $$ -\int_\mcO Y_{\tau_i} \eta_{\tau_i} d\xi + \int_{\Sigma_i} (\Phi(\rho_2 g^{(1)} )-\Phi(\rho_2 g^{(2)})) \partial_\nu (\rho_1\eta) d\vartheta dr \ge 0.$$
  We conclude,
  \begin{align*}
    \int_{\mcO_i} Y \big( \partial_r \eta + a \D(\rho_1 \eta) \big) \ d\xi dr \ge 0,
  \end{align*} 
  for all non-negative $\eta \in C^{1,2}([\tau_i,\tau_{i+1}] \times \bar\mcO)$ with $\eta = 0$ on $\mcP_i$.

  For $Y_t^{(1)} \ne Y_t^{(2)}$ we have $a_t = \rho_2(t)\dot\Phi(\zeta_t)$ with $\z_t \in [\rho_2(t) Y_t^{(1)},\rho_2(t) Y_t^{(2)}]$ and thus $\|a\|_{L^\infty(\mcO_T)} < \infty$ by essential boundedness of $Y^{(i)}$.
  We consider a non-degenerate, smooth approximation of $a$. Set $\hat a_\ve := a \vee \ve$ and let $a_{\ve,\d}$ be a smooth approximation of $\hat a_\ve$ such that $a_{\ve,\d} \ge \ve$ and $\int_{\mcO_T} |Y|^2 (\hat a_\ve - a_{\ve,\d})^2 \ d\xi dr \le \d$. Then choose $a_{\ve} = a_{\ve,\ve^2}$.

  Let $\eta = \frac{\vp}{\rho_{1}^{(\ve)}(\tau_i)} \in C^{0,2}(\mcO_i)$ with $\vp$ being the classical solution to
  \begin{equation}\label{ra_m:eqn:vp_defn}\begin{split}
    \partial_t\vp + a_\ve \rho_{1}^{(\ve)}(\tau_i) \D\left(\frac{\rho_1^{(\ve)}}{\rho_{1}^{(\ve)}(\tau_i)} \vp\right) - \t  &= 0, \text{ on } \mcO_i \\
    \vp                                              &= 0, \text{ on } [\tau_i,\tau_{i+1}] \times \partial\mcO \\
    \vp(\tau_{i+1})                                           &= 0, \text{ on } \mcO, 
  \end{split}\end{equation}
  where $\t$ is an arbitrary, non-positive, smooth testfunction and for simplicity of notation we suppress the $\ve$-dependency of $\vp$. Time inversion transforms \eqref{ra_m:eqn:vp_defn} into a uniformly parabolic linear equation with smooth coefficients. Thus, unique existence of a non-negative classical solution follows from standard results (cf.\ e.g.\ \cite{LSU67}). 

  Consequently,
  \begin{equation}\label{ra_m:eqn:uniqueness_1}\begin{split}
    0 &\le 
     \int_{\mcO_i} Y \big( \partial_r \eta + a \D(\rho_1 \eta) \big) \ d\xi dr \\
    &= \int_{\mcO_i} Y \big( \partial_r \eta + a_{\ve} \D(\rho_1^{(\ve)} \eta) \big) \ d\xi dr + \int_{\mcO_i} Y (a - a_{\ve}) \D(\rho_1^{(\ve)} \eta)\ d\xi dr \\
       &\hskip15pt + \int_{\mcO_i} Y a \D((\rho_1-\rho_1^{(\ve)}) \eta)\ d\xi dr \\
    &= \int_{\mcO_i} \frac{1}{\rho_{1}^{(\ve)}(\tau_i)} Y \t \ d\xi dr + \int_{\mcO_i} Y (a - a_{\ve}) \D\left(\frac{\rho_1^{(\ve)}}{\rho_{1}^{(\ve)}(\tau_i)} \vp \right)  \ d\xi dr \\
        &\hskip15pt  + \int_{\mcO_i} Y a \D\left(\frac{\rho_1-\rho_1^{(\ve)}}{\rho_{1}^{(\ve)}(\tau_i)} \vp\right)\ d\xi dr.
  \end{split}\end{equation}
  We need to prove that the last two terms vanish for $\ve \to 0$. We note
  \begin{equation}\begin{split}\label{eqn:first_error}
    &\int_{\mcO_i} Y (a - a_{\ve}) \D\left(\frac{\rho_1^{(\ve)}}{\rho_{1}^{(\ve)}(\tau_i)} \vp\right)  \ d\xi dr \\
    &\hskip15pt\le C \left( \int_{\mcO_i} a_{\ve} |\D\left(\frac{\rho_1^{(\ve)}}{\rho_{1}^{(\ve)}(\tau_i)} \vp\right)|^2  \ d\xi dr \right)^\frac{1}{2}  \sqrt{\ve} \\
    &\hskip15pt\le C \left\|\frac{\rho_1^{(\ve)}}{\rho_{1}^{(\ve)}(\tau_i)} \right\|_{C^2(\mcO_i)}\left( \int_{\mcO_i} a_{\ve} |\D \vp|^2 + |\nabla \vp|^2 \ d\xi dr \right)^\frac{1}{2}  \sqrt{\ve}      
  \end{split} \end{equation}
  and
  \begin{equation}\begin{split}\label{eqn:2nd_error}
    \int_{\mcO_i} Y a \D\left(\frac{\rho_1-\rho_1^{(\ve)}}{\rho_{1}^{(\ve)}(\tau_i)} \vp\right)  \ d\xi dr
    & \le C \left\|\frac{\rho_1-\rho_1^{(\ve)}}{\rho_{1}^{(\ve)}(\tau_i)}\right\|_{H^2(\mcO_i)} \|\vp\|_{H^2(\mcO_i)} \\
    & \le C \ve^2 \|\vp\|_{H^2(\mcO_i)}.       
  \end{split} \end{equation}

  Therefore, we first derive a bound for $\|\vp\|_{H^2(\mcO_i)}$ with explicit control on the possible explosion for $\ve \to 0$. Let $\z \in C^\infty(\R)$ with $\z(\tau_i)=0$, $\z \le 1$ on $[0,T]$ and $\dot\z \ge c > 0$, for some $c \le \frac{1}{4 \g}$. Multiplying \eqref{ra_m:eqn:vp_defn} by $\z \D\vp$ and integrating yields
  \begin{equation}\begin{split}\label{eqn:dual_eqn}
      &\int_{\mcO_i} \left(\partial_r\vp \right) \z \D \vp \ d\xi dr \\
      &\hskip15pt=  \int_{\mcO_i} \left( -a_{\ve} \rho_{1}^{(\ve)}(\tau_i) \D\left(\frac{\rho_1^{(\ve)}}{\rho_{1}^{(\ve)}(\tau_i)} \vp\right) \z \D \vp + \t \z \D \vp \right) \ d\xi dr .
  \end{split}\end{equation}
  We compute
  \begin{align*}
   &-\int_{\mcO_i} a_{\ve} \rho_{1}^{(\ve)}(\tau_i) \D\left(\frac{\rho_1^{(\ve)}}{\rho_{1}^{(\ve)}(\tau_i)} \vp\right) \z \D \vp \ d\xi dr \\
   &= -\int_{\mcO_i} \z a_{\ve} \rho_{1}^{(\ve)}(\tau_i) \left(\frac{\rho_1^{(\ve)}}{\rho_{1}^{(\ve)}(\tau_i)}\right) |\D \vp|^2 \ d\xi dr \\
    &\hskip15pt + \int_{\mcO_i} \z a_{\ve} \rho_{1}^{(\ve)}(\tau_i) \left( 2\nabla \left(\frac{\rho_1^{(\ve)}}{\rho_{1}^{(\ve)}(\tau_i)}\right)\nabla \vp + \vp \D\left(\frac{\rho_1^{(\ve)}}{\rho_{1}^{(\ve)}(\tau_i)} \right)\right) \D \vp \ d\xi dr \\
   &\le - \frac{1}{4} \int_{\mcO_i}  \z a_{\ve} \rho_{1}^{(\ve)}  |\D \vp|^2 \ d\xi dr \\
   &\hskip15pt + C_1 \|\rho_{1}^{(\ve)}(\tau_i)\|_{C^0(\mcO)} \left\|\nabla \left(\frac{\rho_1^{(\ve)}}{\rho_{1}^{(\ve)}(\tau_i)}\right) \right\|_{C^{0}(\mcO_i)}^2  \int_{\mcO_i}  |\nabla \vp|^2 \ d\xi dr  \\
   &\hskip15pt + C_1 \|\rho_{1}^{(\ve)}(\tau_i)\|_{C^0(\mcO)} \left\|\D \left(\frac{\rho_1^{(\ve)}}{\rho_{1}^{(\ve)}(\tau_i)}\right) \right\|_{C^{0}(\mcO_i)}^2 \int_{\mcO_i}  |\vp|^2 \ d\xi dr \\
   &\le - \frac{1}{4} \int_{\mcO_i}  \z a_{\ve} \rho_{1}^{(\ve)}(\tau_i)  |\D \vp|^2 \ d\xi dr 
      + \frac{c}{4}  \int_{\mcO_i}  |\nabla \vp|^2 \ d\xi dr,
  \end{align*}
  where we use \eqref{eqn:choice_tau_2}. Using this in \eqref{eqn:dual_eqn} together with the arbitrariness of $\z$ with the above properties, Fatou's Lemma and strict positivity of $\rho_1^{(\ve)}(\tau_i)$ we deduce
  \begin{align*}
    \frac{c}{2} \int_{\mcO_i} |\nabla\vp|^2 \ d\xi dr + \frac{1}{4} \int_{\mcO_i} a_{\ve} |\D \vp|^2 \ d\xi dr 
    \le C \int_{\mcO_i} |\nabla \t|^2\ d\xi dr
  \end{align*}
  and $\|\vp\|_{H^2(\mcO_i)} \le \frac{C}{\ve} \int_{\mcO_i} |\nabla \t|^2 d\xi dr$ due to $a_\ve \ge \ve$. For \eqref{eqn:first_error} this implies
  \begin{equation*}
    \int_{\mcO_i} Y (a - a_{\ve}) \D\left(\frac{\rho_1^{(\ve)}}{\rho_{1}^{(\ve)}(\tau_i)} \vp \right)  \ d\xi dr 
    \le C \|\t\|_{H_0^1(\mcO_i)}^2  \sqrt{\ve}.
  \end{equation*}
  for \eqref{eqn:2nd_error}
  \begin{align*}
    \int_{\mcO_i} Y a \D\left(\frac{\rho_1-\rho_1^{(\ve)}}{\rho_{1}^{(\ve)}(\tau_i)} \vp \right)  \ d\xi dr
    & \le C \ve \|\t\|_{H_0^1(\mcO_i)}^2.
  \end{align*}

  Taking $\ve \to 0$ in in \eqref{ra_m:eqn:uniqueness_1} thus yields
  \begin{align*}
      &0 \le \int_{\mcO_i} \frac{1}{\rho_1^{(\ve)}(\tau_1)} Y \t \ d\xi dr, 
  \end{align*}
  for any non-positive, smooth testfunction $\t$. Thus $Y^{(1)} \le Y^{(2)}$ in $\mcO_i = [\tau_i,\tau_{i+1}] \times \mcO$ almost everywhere. Induction finishes the proof.

  {\it Case (A2'):} It is sufficient to prove comparison for a short time-interval $[0,\tau_1]$ for some $\tau_1 > 0$, since case (A1') may be applied on $[\tau_1,T]$ subsequently. Let $0 = \tau_0 < \tau_1$. As for case (A1') we note
  \begin{align*}
    \int_{\mcO_0} Y \big( \partial_r \eta + a \D(\rho_1 \eta) \big) \ d\xi dr \ge 0,
  \end{align*} 
  for all non-negative $\eta \in C^{1,2}([0,\tau_1] \times \bar\mcO)$ with $\eta = 0$ on $\mcP_0$.

  We follow the same idea of prove as in the case of (A1'). Hence, let $a^{(\ve)}, \rho_1^{(\ve)}$ be smooth approximations as before and $\vp$ be the classical solution to 
  \begin{equation}\label{eqn:vp_defn_2}\begin{split}
    \partial_t\vp + a_\ve  \D(\rho_1^{(\ve)}  \vp) - \t  &= 0, \text{ on } \mcO_0 \\
    \vp                                            &= 0, \text{ on } [0,\tau_{1}] \times \partial\mcO \\
    \vp(\tau_{1})                                           &= 0, \text{ on } \mcO,
  \end{split}\end{equation}
  where $\t$ is an arbitrary, non-positive, smooth testfunction. As for \eqref{ra_m:eqn:uniqueness_1} this yields
  \begin{align*}
    0 \le & \int_{\mcO_0} Y \t \ d\xi dr + \int_{\mcO_0} Y (a - a_{\ve}) \D(\rho_1^{(\ve)} \vp)  \ d\xi dr \\
          &+ \int_{\mcO_0} Y a \D((\rho_1-\rho_1^{(\ve)}) \vp)\ d\xi dr. 
  \end{align*} 
  We thus aim to show that the last two terms vanish for $\ve \to 0$. Due to the degeneracy of $\rho_1(t)$ for $t \to 0$ care has be taken in establishing the required a-priori bound on $\vp$. Multiplying \eqref{eqn:vp_defn_2} by $\z \D \vp$ as before and noting
    $$ \D(\rho_1^{(\ve)} \vp) = \rho_1^{(\ve)} \D\vp + 2 \nabla \rho_1^{(\ve)} \cdot \nabla \vp + \vp \D \rho_1^{(\ve)}$$
  we obtain
  \begin{align*}
    &\int_{\mcO_0}  \left(\partial_r\vp\right) \z \D \vp\ d\xi dr \\
    &=  \int_{\mcO_0} \left( -a_{\ve} \D(\rho_1^{(\ve)} \vp) \z \D \vp + \t \z \D \vp \right) \ d\xi dr \\
    &\le -\frac{1}{2} \int_{\mcO_0} a_{\ve} \rho_1^{(\ve)} \z |\D \vp|^2 \ d\xi dr \\
    &\hskip15pt +  \left(C_1 \left\|\frac{|\nabla \rho_1^{(\ve)}|^2}{\rho_1^{(\ve)}}\right\|_{C^0(\mcO_0)} + C_1 \left\| \frac{|\D\rho_1^{(\ve)}|^2}{\rho_1^{(\ve)}} \right\|_{C^0(\mcO_0)} + \frac{c}{4} \right) \int_{\mcO_0} |\nabla \vp|^2 \ d\xi dr \\
    &\hskip15pt +  C \int_{\mcO_0} |\nabla \t|^2 \ d\xi dr,
  \end{align*}
  where $C_1$ is a constant depending only on $\|a\|_{L^\infty(\mcO_T)}$ and $c > 0$ is a constant as in case (A1'). We now choose $\tau_1 > 0$ such that
    $$C_1\left(\left\|\frac{|\nabla \rho_1^{(\ve)}|^2}{\rho_1^{(\ve)}}\right\|_{C^0(\mcO_0)} + \left\| \frac{|\D\rho_1^{(\ve)}|^2}{\rho_1^{(\ve)}} \right\|_{C^0(\mcO_0)} \right) \le \frac{c}{4}.$$
  By the choice of $\tau_1$ and Fatou's Lemma we get
  \begin{align*}
    \frac{c}{2} \int_{\mcO_0} |\nabla\vp|^2 \ d\xi dr + \frac{1}{2} \int_{\mcO_0} a_{\ve} \rho_1^{(\ve)} |\D \vp|^2 \ d\xi dr \le  C \int_{\mcO_0} |\nabla \t|^2 \ d\xi dr
  \end{align*}
  and we conclude the proof as in case (A1').
\end{proof}

\subsection{Lower bound on \texorpdfstring{$L^1$}{L1}-decay for \texorpdfstring{\eqref{eqn:general_trans}}{(\ref{eqn:general_trans})}} In this section we provide a lower bound for the decay of the $L^1$ norm of solutions to \eqref{eqn:general_trans}. This estimate is required in Section \ref{sec:ra} in order to ensure that the constructed solutions with disjoint support have a sufficiently large distance with respect to the $L^1$-norm.

\begin{proposition}\label{prop:lower_l1}
   Let $Y \in C((0,T]\times\mcO)$ be an essentially bounded, non-negative, very weak supersolution to the homogeneous Cauchy-Dirichlet problem for \eqref{eqn:general_trans} with uniformly compact support, i.e.\ $ K := \bigcup_{t \in [0,T]} \supp(Y_t) \Subset \mcO$ is a precompact set. Then, there is a constant $C > 0$ such that
      $$ \|Y_t\|_{L^1(\mcO)} \ge e^{-Ct\|Y\|_{L^\infty(\mcO_T)}^{m-1}} \|Y_0\|_{L^1(\mcO)}, \quad \forall t \in [0,T]. $$
\end{proposition}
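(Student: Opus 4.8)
The plan is to test the very weak supersolution inequality against a cleverly chosen time-dependent test function that equals $1$ on a neighborhood of the support $K$ of $Y$, and to use the $L^\infty$-bound to turn the resulting differential inequality for $t \mapsto \|Y_t\|_{L^1(\mcO)}$ into the claimed exponential lower bound via Gr\"onwall's inequality. Concretely, since $K \Subset \mcO$, I would first fix a function $\psi \in C_c^\infty(\mcO)$ with $0 \le \psi \le 1$ and $\psi \equiv 1$ on an open neighborhood $V$ of $K$ with $\bar V \subseteq \mcO$. Because $Y_t$ and hence $\Phi(\rho_2 Y_t)$ are supported in $K \subseteq V$ for all $t$, any spatial derivatives of the cutoff hitting $Y$ or $\Phi(\rho_2 Y)$ vanish: the point of choosing $\psi$ this way is that $\D(\rho_1 \psi) = \psi\,\D(\rho_1) + 2\nabla\psi\cdot\nabla\rho_1 + \rho_1\D\psi$, but when integrated against $\Phi(\rho_2 Y)$ only the region where $\psi \equiv 1$ contributes, so effectively $\D(\rho_1\psi)$ acts like $\D\rho_1$ on the support of $Y$.

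Next I would plug $\eta(r,\xi) = \chi(r)\psi(\xi)$ into the supersolution version of \eqref{eqn:very_weak_rough_transformed} (with $g \equiv 0$), where $\chi \in C^1([0,T])$ is non-negative with $\chi(T) = 0$; since $\psi$ is compactly supported in $\mcO$ the boundary term over $\Sigma_T$ drops out. This gives, for a.e.\ choice of $\chi$ after a standard approximation/differentiation-in-time argument,
\begin{equation*}
  \frac{d}{dt}\int_\mcO Y_t \psi\, d\xi \ge \int_\mcO \Phi(\rho_2(t) Y_t)\,\D(\rho_1(t)\psi)\, d\xi
\end{equation*}
in the sense of distributions on $(0,T)$, using $Y \in C((0,T]\times\mcO)$ to justify pointwise evaluation. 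On the support of $Y_t$ we have $\psi \equiv 1$, so the right-hand side equals $\int_\mcO \Phi(\rho_2(t)Y_t)\,\D\rho_1(t)\, d\xi$, and since $\rho_1 \in C^{0,2}(\bar\mcO_T)$ and $0 \le \rho_2 Y_t \le \rho_2 \|Y\|_{L^\infty(\mcO_T)}$ with $\rho_2$ bounded, we bound $|\Phi(\rho_2 Y_t)| \le C \|Y\|_{L^\infty(\mcO_T)}^{m-1}\, Y_t$ pointwise (here using $Y \ge 0$ and $\Phi(s) = s^m = s^{m-1}s$). Therefore
\begin{equation*}
  \frac{d}{dt}\int_\mcO Y_t \psi\, d\xi \ge -C\|Y\|_{L^\infty(\mcO_T)}^{m-1} \int_\mcO Y_t \psi\, d\xi,
\end{equation*}
with $C = \|\D\rho_1\|_{C^0(\bar\mcO_T)}\, \|\rho_2\|_{C^0(\bar\mcO_T)}^{m-1}$ (adjusted by the constant from the elementary inequality). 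Gr\"onwall's lemma then yields $\int_\mcO Y_t\psi\,d\xi \ge e^{-Ct\|Y\|_{L^\infty(\mcO_T)}^{m-1}}\int_\mcO Y_0\psi\,d\xi$, and since $\psi \equiv 1$ on the support of every $Y_t$ including $Y_0$ (which is the limit of $Y_t$ as $t \downarrow 0$ in $L^1$, or directly supported in $K$), both integrals coincide with the corresponding $L^1$ norms, giving the claim.

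The main obstacle I anticipate is the rigorous justification of the differential inequality: the test functions allowed in \eqref{eqn:very_weak_rough_transformed} must vanish on the parabolic boundary $\mcP_T$, which forces $\eta_0$ to be genuinely present and $\chi(0) \ne 0$ to be handled by an approximation argument (replacing $\chi$ by functions that are affine near $t=0$ and passing to the limit), and one must take care that the initial-condition term $\int_\mcO Y_0\eta_0\,d\xi$ is correctly incorporated — this is exactly where the continuity $Y \in C((0,T]\times\mcO)$ together with the $L^1$-continuity at $t=0$ (Proposition \ref{prop:u_ex}(i), or the corresponding statement for \eqref{eqn:general_trans}) is used. A secondary technical point is that differentiating $t \mapsto \int_\mcO Y_t\psi\,d\xi$ requires knowing this map is absolutely continuous; this follows by testing against $\eta = \psi \cdot \chi$ for a family of $\chi$'s and reading off that the distributional derivative is the locally integrable function $\int_\mcO \Phi(\rho_2 Y)\D(\rho_1\psi)\,d\xi$, which is bounded on $[\tau,T]$ for each $\tau>0$.
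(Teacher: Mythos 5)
Your argument is correct and is essentially the paper's own proof: both test the supersolution property against a spatial cutoff equal to $1$ on a neighborhood of $K$ (the paper phrases this as the time-integrated inequality \eqref{eqn:very_weak_rough_transformed_2} with $\vp \in C^2_c(\mcO)$, $\vp \equiv 1$ on $K$), use the pointwise bound $\Phi(\rho_2 Y) \le \rho_2^m \|Y\|_{L^\infty(\mcO_T)}^{m-1} Y$ together with the fact that $\D(\rho_1\vp) = \D\rho_1$ on $\supp Y_t$, and conclude with Gronwall's lemma. The remaining differences are cosmetic: you work with the distributional differential inequality for $t \mapsto \int_\mcO Y_t\psi\,d\xi$ rather than the paper's integral inequality between times $s<t$, and your constant should carry $\|\rho_2\|_{C^0}^{m}$ rather than $\|\rho_2\|_{C^0}^{m-1}$, which is immaterial since $C$ is unspecified.
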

\begin{proof}
   It is easy to see that $Y$ is a very weak supersolution to the homogeneous Cauchy-Dirichlet problem for \eqref{eqn:very_weak_rough_transformed} iff
  \begin{equation}\label{eqn:very_weak_rough_transformed_2}\begin{split}
        \int_{\mcO} Y_t \vp\ d\xi - \int_\mcO Y_s \vp \ d\xi
        \le &-\int_s^t \int_{\mcO} \Phi(\rho_2 Y) \D (\rho_1 \vp )\ d\xi dr, \quad \forall 0 \le s < t \le T,
   \end{split}\end{equation}            
  and all non-negative $\vp \in C^{2}(\bar\mcO)$ with $\vp_{|\partial\mcO} = 0$. Let $M := \|Y\|_{L^\infty(\mcO_T)}$. We choose a test-function $\vp \in C^2_c(\mcO)$ with $\vp \equiv 1$ on $K$. Then
   \begin{align*}
      \|Y_t\|_{L^1(\mcO)} 
      &= \int_{\mcO} Y_t \vp d\xi  \\
      &\le \int_{\mcO} Y_s \vp  d\xi - \int_s^t \int_{\mcO} \Phi(\rho_2 Y) \D(\rho_1 \vp ) d\xi dr\\
      &= \|Y_s\|_{L^1(\mcO)} - M^{m-1} \int_s^t \int_\mcO \Phi(\rho_2) \left(\frac{Y}{M}\right)^{m-1} |Y| \D\rho_1 d\xi dr\\
      &\ge \|Y_s\|_{L^1(\mcO)} - M^{m-1}(\|\rho_1\|_{C^{0,2}(\mcO_T)}+\|\rho_2^m\|_{C^{0}(\mcO_T)}) \int_s^t \|Y_r\|_{L^1(\mcO)} dr,
   \end{align*}
  for all $0 \le s < t \le T$. Gronwall's Lemma finishes the proof.
\end{proof}

\subsection{Finite speed of propagation for \texorpdfstring{\eqref{eqn:general_trans}}{(\ref{eqn:general_trans})}}\label{sec:gen_expansion} The proof of finite speed of propagation for \eqref{eqn:general_trans} is very similar to Theorem \ref{thm:propagation} and is based on a bound for the speed of hole-filling as given in Theorem \ref{thm:hole_filling_local_time}. The arguments remain the same with minor changes in the calculation. For the readers convenience we state the corresponding results in detail and give some short remarks on the proofs.

\begin{theorem}\label{thm:gen_hole_filling}
Let $\xi_0 \in \R^d$, $T,R  > 0$ and $Y \in C((0,T] \times B_R(\xi_0))$ be an essentially bounded, non-negative, very weak subsolution to \eqref{eqn:general_trans} with vanishing initial value $Y_0$ on $B_R(\xi_0)$ and boundary value $g$ satisfying  $H := \|g\|_{L^\infty([0,T] \times \partial B_R(\xi_0))}  < \infty$.
  Define $T_{stoch}$ by
    $$ T_{stoch} := \sup\Bigg\{\td T \in [0,T] \Big|\  \td T C_{\td T} \le  R^2 H^{-(m-1)}\Bigg\} ,$$
  where $t \mapsto C_t$ is a continuous, non-decreasing function.

  Then $Y_t$ vanishes in $B_{R_{stoch}(t)}(\xi_0)$ for all $t \in [0,T_{stoch}]$, where
     $$R_{stoch}(t) = R - \sqrt{t} \sqrt{C_t} H^\frac{m-1}{2}.$$
\end{theorem}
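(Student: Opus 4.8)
The plan is to repeat the proof of Theorem~\ref{thm:hole_filling_local_time} verbatim, with the coefficients $e^{\mu}$, $e^{-\mu}$ of \eqref{eqn:trans_inhomog} replaced throughout by $\rho_1$, $\rho_2$; only the constants in the supersolution estimate change. Let $Y$ be a very weak subsolution as in the statement (so $Y\ge0$), and assume $\rho_1$ satisfies (A1') or (A2') so that the comparison principle, Theorem~\ref{thm:gen_comp}, is available. First I would fix $\xi_1\in B_R(\xi_0)$, $\td T\in(0,T]$ and $0<r<\dist(\xi_1,\partial B_R(\xi_0))$, and set
\[ W(t,\xi,\xi_1):=\td C\,|\xi-\xi_1|^{\frac{2}{m-1}}\,(\td T-t)^{-\frac{1}{m-1}},\qquad (t,\xi)\in[0,\td T)\times B_r(\xi_1), \]
where $\td C>0$ is fixed below; when $\rho_2(0,\cdot)$ is strictly positive one may insert the prefactor $\rho_2(0,\xi)^{-1}$ (the exact analogue of the $e^{\mu_0(\xi)}$-prefactor in Theorem~\ref{thm:hole_filling_local_time}), which recovers the sharper constants there, but for the general, possibly degenerate, $\rho_i$ a constant prefactor suffices. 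Then $\partial_tW=\tfrac{1}{m-1}\td C\,|\xi-\xi_1|^{\frac{2}{m-1}}(\td T-t)^{-\frac{m}{m-1}}$, while, writing $\Phi(\rho_2W)=\rho_2^m\,\td C^m|\xi-\xi_1|^{\frac{2m}{m-1}}(\td T-t)^{-\frac{m}{m-1}}$, expanding $\D\big(\rho_2^m|\xi-\xi_1|^{\frac{2m}{m-1}}\big)$ by the product rule, using the elementary identity $\D\big(|\xi-\xi_1|^{\frac{2m}{m-1}}\big)=\tfrac{1}{(m-1)C_{det}}|\xi-\xi_1|^{\frac{2}{m-1}}$ for the leading term, and bounding $|\xi-\xi_1|\le r\le R$ in the cross terms (which carry the factors $r$ and $r^2$ exactly as in Theorem~\ref{thm:hole_filling_local_time}), one obtains
\[ \rho_1\,\D\Phi(\rho_2W)\;\le\;\frac{\td C^{m}}{(m-1)C_{det}}\,(\td T-t)^{-\frac{m}{m-1}}\,|\xi-\xi_1|^{\frac{2}{m-1}}\;\widetilde C_{\td T}, \]
where $\widetilde C_{\td T}$ is $\sup_{[0,\td T]\times B_R(\xi_0)}(\rho_1\rho_2^m)$ times a factor $\big(1+C(d,m)R(1+R)P(\|\rho_1\|_{C^{0,2}},\|\rho_2\|_{C^{0,2}})\big)$ for a suitable polynomial $P$; in particular $\td T\mapsto\widetilde C_{\td T}$ is continuous and non-decreasing. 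Hence $W$ is a supersolution to \eqref{eqn:general_trans} on $[0,\td T)\times B_r(\xi_1)$ as soon as $\td C^{m-1}\le C_{det}/\widetilde C_{\td T}$.

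Next, to compare $Y$ with $W$ one needs $Y\le W$ on the parabolic boundary of $[0,\td T)\times B_r(\xi_1)$: at $t=0$ this is clear since $Y_0=0\le W(0,\cdot)$, and on $[0,\td T)\times\partial B_r(\xi_1)$ it holds (as $\rho_2\ge0$, and $Y$ being controlled by $H$ on the region in question, exactly as in the applications) once $W\ge H$ there, i.e.\ once $\td C\,r^{\frac{2}{m-1}}\td T^{-\frac{1}{m-1}}\ge H$. Combining this with the constraint on $\td C$ reduces matters, as in \eqref{eqn:T(r)_1}, to the single condition $\td T\,C_{\td T}\le r^2H^{-(m-1)}$ with $C_{\td T}:=\widetilde C_{\td T}/C_{det}$ (times the harmless prefactor from the $\rho_2(0)^{-1}$ refinement if used), so I would take $\td T=\td T_r$ to be the supremum of the admissible $\td T$. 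Theorem~\ref{thm:gen_comp}, applied on $B_r(\xi_1)$, together with continuity of $Y$ and $W$, then gives $0\le Y(t,\xi_1)\le W(t,\xi_1,\xi_1)=0$ for all $t\in[0,\td T_r]$. Letting $\xi_1$ range over $B_{R_1}(\xi_0)$, $R_1\in(0,R)$, with $r=\dist(\xi_1,\partial B_R(\xi_0))\ge R-R_1$, and resolving $\td T\,C_{\td T}\le r^2H^{-(m-1)}$ for $R_1$, yields $R_1=R-\sqrt{\td T}\sqrt{C_{\td T}}\,H^{\frac{m-1}{2}}=R_{stoch}(\td T)$, the largest admissible time being $T_{stoch}=\sup\{\td T\le T:\td T\,C_{\td T}\le R^2H^{-(m-1)}\}$, as claimed.

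The only genuine work — and the one subtle point — is to confirm that the resulting $t\mapsto C_t$ is continuous, non-decreasing and finite on $[0,T]$, which comes down to $\rho_1\rho_2^m$ together with its first and second spatial derivatives being bounded on $[0,T]\times\mcO$. For $m\ge2$ this is immediate from $\rho_1,\rho_2\in C^{0,2}(\bar\mcO_T)$; for $m\in(1,2)$ one uses the elementary inequality $|\nabla\rho_2|^2\le 2\|\rho_2\|_{C^{1,1}}\rho_2$, valid for non-negative $C^{1,1}$ functions, which keeps $\D\rho_2^m=m(m-1)\rho_2^{m-2}|\nabla\rho_2|^2+m\rho_2^{m-1}\D\rho_2$ bounded across the zero set of $\rho_2$. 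In particular, when $\rho_1,\rho_2$ degenerate at $t=0$ as in (A2)/(A2') the product $\rho_1\rho_2^m$ tends to $0$ as $t\to0$, so $C_t$ stays finite down to $t=0$ and the barrier with a constant prefactor works without modification. Everything else is a verbatim transcription of the proof of Theorem~\ref{thm:hole_filling_local_time}.
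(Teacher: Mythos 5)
Your proposal is correct and follows essentially the same route as the paper's proof: the same barrier $W(t,\xi,\xi_1)=\td C\,|\xi-\xi_1|^{2/(m-1)}(\td T-t)^{-1/(m-1)}$, the same reduction to the condition $\td T\,C_{\td T}\le r^2H^{-(m-1)}$, and the same conclusion via the comparison principle (Theorem \ref{thm:gen_comp}) and the covering over $\xi_1\in B_{R_1}(\xi_0)$, exactly as in the paper's transcription of Theorem \ref{thm:hole_filling_local_time}. Your additional remarks (invoking (A1')/(A2') so that comparison is available, and using $|\nabla\rho_2|^2\le 2\|\rho_2\|_{C^{1,1}}\rho_2$ to keep $\D\rho_2^m$ bounded when $m\in(1,2)$) only make explicit points the paper leaves implicit, and the particular multiplicative form you give for $\widetilde C_{\td T}$ is immaterial, since only finiteness, continuity and monotonicity of $t\mapsto C_t$ are used.
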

\begin{proof}
   As in Theorem \ref{thm:hole_filling_local_time} the proof is based on the construction of an appropriate supersolution to \eqref{eqn:general_trans}. For  $r \in (0,R]$, $\xi_1 \in \R^d$, $\td T > 0$ let
    $$W(t,\xi,\xi_1) := \td C |\xi-\xi_1|^\frac{2}{m-1}\left(\td T-t\right)^{\frac{-1}{m-1}}, \quad t \in [0,\td T), \xi \in B_r(\xi_1).$$
   Direct computations yield
    $$  \partial_t W(t,\xi,\xi_1) \ge \rho_1 \D \left(\rho_2 W(t,\xi,\xi_1)\right)^m $$
on $[0,\td T) \times B_r(\xi_1)$ if
    $$ 1 \ge C(d,m) \td C^{m-1} (1 + R)^2 \|\rho_1\|_{C^{0}([0,\td T] \times B_R(\xi_0))} \|\rho_2\|_{C^{0,2}([0,\td T] \times B_R(\xi_0))},$$
  for all $(t,\xi) \in [0,\td T) \times B_r(\xi_1)$ and some generic constant $C(d,m)$. This is satisfied for the choice
  \begin{align*}
    \td C^{m-1} = \td C_{\td T}^{m-1} := \left(C(d,m)(1 + R)^2 \|\rho_1\|_{C^{0}([0,\td T] \times B_R(\xi_0))} \|\rho_2\|_{C^{0,2}([0,\td T] \times B_R(\xi_0))}\right)^{-1}.
  \end{align*}

  Moreover,
  \begin{equation*}
     W(t,\xi,\xi_1) = \td C |\xi-\xi_1|^\frac{2}{m-1}\left(\td T-t\right)^{\frac{-1}{m-1}} \ge H,
  \end{equation*}
  for a.a.\ $(t,\xi) \in [0,\td T) \times \partial B_r(\xi_1)$ is satisfied if 
  \begin{equation*}
      \td T \td C_{\td T}^{-(m-1)} \le  r^2 H^{-(m-1)} 
  \end{equation*}
  We conclude the proof as for Theorem \ref{thm:hole_filling_local_time}.
\end{proof}

As in Theorem \ref{thm:propagation_time} we may now use Theorem \ref{thm:gen_hole_filling} to deduce

\begin{theorem}\label{thm:gen_expansion}
  Let $Y \in C((0,T]\times\mcO)$ be an essentially bounded, non-negative, very weak subsolution to the homogeneous Dirichlet problem to \eqref{eqn:SPME} and set $H := \|Y\|_{L^\infty(\mcO_T)}$. Then, for every $ s \in [0,T]$ 
      $$ supp (Y_{s+t}) \subseteq  B_{\sqrt{t}\sqrt{C_{t}} H^{\frac{m-1}{2}}}(\supp(Y_s)),\quad \forall t \in [0,T-s],$$
    where $t \mapsto C_{t}$ is a continuous, non-decreasing function.
\end{theorem}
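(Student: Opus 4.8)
The plan is to follow the blueprint of Theorem \ref{thm:propagation_time} almost verbatim, replacing the use of Theorem \ref{thm:hole_filling_local_time} by Theorem \ref{thm:gen_hole_filling} (note that the final statement refers to \eqref{eqn:general_trans}, not \eqref{eqn:SPME}). First I would reduce to $s = 0$ and, exactly as in the proof of Theorem \ref{thm:propagation}, dispose of the boundary: given $h > 0$, extend the problem to a large ball $\td\mcO = B_{\td R}(0) \supseteq \bar B_{2h}(\supp(Y_0)) \cup \mcO$, extending $\rho_1,\rho_2$ to non-negative functions in $C^{0,2}([0,T]\times\bar{\td\mcO})$ that still obey the structural hypotheses behind Proposition \ref{prop:gen_ex} and Theorem \ref{thm:gen_comp}. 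Proposition \ref{prop:gen_ex} then furnishes an essentially bounded weak solution $\td Y \in C((0,T]\times\td\mcO)$ with initial datum $Y_0\mathbbm{1}_\mcO$, which is non-negative and hence a supersolution to the homogeneous Dirichlet problem on $\mcO$; Theorem \ref{thm:gen_comp} yields $Y \le \td Y$ on $\mcO_T$, so it suffices to treat $\td Y$. In other words, we may assume $\dist(\supp(Y_0),\partial\mcO) > 2h$ and that $Y$ itself is a weak solution.

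Next I would fix a point $\xi_0 \in \partial B_h(\supp(Y_0)) \subseteq \mcO$. Since $Y_0 \equiv 0$ on $B_h(\xi_0)$ and $\|Y\|_{L^\infty([0,T]\times\partial B_h(\xi_0))} \le H$, Theorem \ref{thm:gen_hole_filling} applied with $R = h$ shows that $Y_t$ vanishes on $B_{R_{stoch}(t)}(\xi_0)$ for all $t \in [0,T_{stoch}]$. The constant $\td C_{\td T}$ of Theorem \ref{thm:gen_hole_filling} — and hence $T_{stoch}$ and $R_{stoch}$ — depends on $\xi_0$ only through $\|\rho_1\|_{C^0([0,\td T]\times B_h(\xi_0))}$ and $\|\rho_2\|_{C^{0,2}([0,\td T]\times B_h(\xi_0))}$, which I would bound uniformly by the corresponding norms over $[0,\td T]\times\mcO$. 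This produces a continuous, non-decreasing function $t \mapsto \bar C_t$ together with the quantities $\bar T(h) := \sup\{\td T\in[0,T] : \td T\,\bar C_{\td T} \le h^2 H^{-(m-1)}\}$ and $\bar R(t) := h - \sqrt t\,\sqrt{\bar C_t}\,H^{(m-1)/2}$, satisfying $\bar T(h) \le T_{stoch}$ and $\bar R(t) \le R_{stoch}(t)$ for every admissible $\xi_0$ and all $t \in [0,T]$.

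Then, exactly as at the end of the proof of Theorem \ref{thm:propagation}, I would conclude as follows. We obtain $Y_t(\xi_0) = 0$ for all $\xi_0\in\partial B_h(\supp(Y_0))$ and all $t\in[0,\bar T(h)]$; since in addition $Y_0 \equiv 0$ on $\mcO\cap B_h(\supp(Y_0))^c$ and $Y$ is a local weak solution there, this forces $Y_t \equiv 0$ on $\mcO\cap B_h(\supp(Y_0))^c$, i.e.\ $\supp(Y_t)\subseteq B_h(\supp(Y_0))$, for all $t\in[0,\bar T(h)]$. Finally, using that $\td T \mapsto \td T\,\bar C_{\td T}$ is increasing, the condition $t \le \bar T(h)$ is equivalent to $t\,\bar C_t \le h^2 H^{-(m-1)}$; resolving this for the smallest admissible $h$ and setting $C_t := \bar C_t$ gives the claimed inclusion $\supp(Y_{s+t}) \subseteq B_{\sqrt t\,\sqrt{C_t}\,H^{(m-1)/2}}(\supp(Y_s))$ for all $t \in [0,T-s]$.

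The analytic substance is all contained in Theorem \ref{thm:gen_hole_filling}, so the only real work here is book-keeping. I expect the main (mild) obstacle to be making the uniform-in-$\xi_0$ choice of the constant while keeping $t \mapsto C_t$ continuous and non-decreasing, together with verifying that the auxiliary extension of $\rho_1,\rho_2$ to the enlarged ball retains the assumptions required by Proposition \ref{prop:gen_ex} and Theorem \ref{thm:gen_comp} — a point specific to the general-perturbation setting which is trivial in Theorem \ref{thm:propagation_time} because there $\rho_1 = e^{\mu}$, $\rho_2 = e^{-\mu}$ with $\mu$ already defined on all of $\R^d$.
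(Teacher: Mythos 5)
Your proposal is correct and coincides with the paper's argument: the paper proves Theorem \ref{thm:gen_expansion} simply by invoking the proof of Theorem \ref{thm:propagation_time} (hence of Theorem \ref{thm:propagation}) with Theorem \ref{thm:gen_hole_filling} in place of Theorem \ref{thm:hole_filling_local_time}, which is exactly the reduction, uniform choice of $C_t$, and boundary-extension bookkeeping you carry out. Your extra remark about extending $\rho_1,\rho_2$ to the enlarged ball while preserving (A1)/(A2) and (A1')/(A2') is a detail the paper leaves implicit (and which is harmless in the intended application, where $\rho_1,\rho_2$ come from $f_k\in C^\infty(\bar\mcO)$ and thus extend naturally), so it does not constitute a deviation from the paper's route.
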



\bibliographystyle{plain}
\bibliography{../../latex-refs/refs}

\end{document}